\numberwithin{equation}{section}
\newtheorem{teo}{Theorem}
\newtheorem{lemma}{Lemma}
\newtheorem{defi}{Definition}
\newtheorem{coro}{Corollary}
\newtheorem{prop}{Proposition}
\numberwithin{lemma}{section}
\numberwithin{prop}{section}
\numberwithin{teo}{section}
\numberwithin{defi}{section}
\numberwithin{coro}{section}
\numberwithin{figure}{section}
\begin{document}
\title{Curvature Motion in a Minkowski Plane}
\author{Vitor Balestro}
\address[V.Balestro and R.Teixeira]{ Instituto de Matem\'{a}tica -- UFF --
Niter\'{o}i -- Brazil}
\email{vitorbalestro@id.uff.br\\
ralph@mat.uff.br}
\author{Marcos Craizer}
\address[M.Craizer]{ Departamento de Matem\'{a}tica -- PUC-Rio -- Rio de
Janeiro -- Brazil}
\email{craizer@puc-rio.br}
\author{Ralph C. Teixeira}

\begin{abstract}
In this paper we study the curvature flow of a curve in a plane endowed with
a minkowskian norm whose unit ball is smooth. We show that many of the
properties known in the euclidean case can be extended (with due
adaptations) to this new situation. In particular, we show that simple,
closed, strictly convex, smooth curves remain so until the area enclosed by
them vanishes.\ Moreover, their isoperimetric ratios converge to the minimum
possible value, only attained by the minkowskian circle -- so these curves
converge to a minkowskian "circular point" as the enclosed area approaches
zero.
\end{abstract}

\subjclass{52A10, 52A21, 52A40, 53A35, 53C44}
\keywords{curvature motion, minkowski plane, isoperimetric inequality}
\maketitle

\section{\label{SecIntro}Introduction}

\noindent\textit{Note: Since we wrote this paper, it came to our attention that its main
results have already been published. See: }\\

\noindent[1] Gage, \textit{M., Evolving plane curves by curvature in relative geometries}, Duke Math J. 72 (1993) 441-466. \\
\noindent[2] Gage, M. \&  Li, Y., \textit{Evolving plane curves by curvature in relative geometries II}, Duke Math J. 75 (1994) 79-98. \\

Possibly the most fascinating front deformation, the classical planar
Curvature Motion is defined by%
\begin{equation*}
\frac{\partial \gamma }{\partial t}\left( u,t\right) =k\left( u,t\right)
N\left( u,t\right)
\end{equation*}%
where $k$ and $N$ are the curvature and the inwards unit normal vector to
the closed curve $\gamma \left( \cdot ,t\right) $ at the point $\gamma
\left( u,t\right) $. A series of papers (\cite{gage1}, \cite{gage2}, \cite%
{gage3} and \cite{grayson}) has shown that any embedded curve in the
Euclidean plane remains embedded and converges to a "circular point" in
finite time.\ Moreover, if $L\left( t\right) $ and $A\left( t\right) $ are
the length of $\gamma $ and the area it encloses, some very simple formulae
can be shown about their evolutions:%
\begin{align*}
\frac{dL}{dt}& =-\int k^{2}ds \\
\frac{dA}{dt}& =-2\pi \\
\lim_{t\rightarrow t_{V}}\frac{L^{2}}{A}& =4\pi
\end{align*}%
(where $t_{V}=\frac{A\left( 0\right) }{2\pi }$) the last one being one
interpretation of "converging to a circular shape".

On the other hand, a Minkowski plane is a 2-dimensional vector space with a
norm which can be defined by its unit ball $\mathcal{P}$ (a convex symmetric
set). Of course, along with a different geometry, come different notions of
lengths, normal vectors and curvature, which we very briefly review in the
next section (see \cite{thompson} for details and \cite{martini1}, \cite%
{martini2} for a survey). So it is natural to ask: are the properties of
Curvature Motion still valid on the Minkowski plane, with the due
adaptations? The goal of this paper is to answer a resounding YES, at least
when the boundary of $\mathcal{P}$ is smooth and the initial curve $\gamma
\left( \cdot ,0\right) $ is smooth and strictly convex. More specifically,
following similar techniques as in \cite{gage1}, \cite{gage2} and \cite%
{gage3}, we show that the flow is well defined up to the vanishing time $%
t_{V}=\frac{A\left( \gamma \left( 0\right) \right) }{2A\left( \mathcal{P}%
\right) }$, and that%
\begin{eqnarray*}
\frac{dL_{\mathcal{Q}}}{dt} &=&-\int k^{2}ds \\
\frac{dA}{dt} &=&-2A\left( \mathcal{P}\right) \\
\lim_{t\rightarrow t_{V}}\frac{L_{\mathcal{Q}}^{2}}{A} &=&4A\left( \mathcal{P%
}\right)
\end{eqnarray*}%
where $A\left( \mathcal{P}\right) $ is the area of the unit ball $\mathcal{P}
$ and all lengths are taken with respect to the metric defined by the dual
unit ball $\mathcal{Q}$.

The structure of this paper is as follows: section \ref{SecMink} briefly
reminds us of the basic ideas of Minkowski plane geometry, including some
notation choices. Section \ref{SecIso} states many interesting and necessary
minkowskian isoperimetric inequalities; it is divided in two subsections,
the first devoted to the minkowskian version of Gage%
%TCIMACRO{\U{b4}}%
%BeginExpansion
\'{}%
%EndExpansion
s inequality and the second to a lemma with a more technical proof. Section %
\ref{SecCurv} defines the Minkowskian curvature flow and calculates the
evolutions of curvatures, lengths and areas as long as the flow is well
defined. Section \ref{SecConv} shows the convergence of the isoperimetric
ratio to the "circular" value $4A\left( \mathcal{P}\right) $ if the enclosed
area goes to $0$ and the curves remain simple and convex along the motion.
Finally, the technical section \ref{SecExist} has the job of showing the
existence of such a flow, all the way until the enclosed area converges to $%
0 $, at least when the initial curve is strictly convex and smooth, rounding
up the former results.

\textbf{Acknowledgment.} The authors would like to thank CNPq for financial
support during the preparation of this manuscript.

\section{\label{SecMink}Minkowski plane and its dual}

Let $\mathcal{P}$ be a strictly convex set, symmetric (which, throughout the
paper, will mean "symmetric with respect to the origin"), whose boundary is
given by a $C^{\infty }$ curve $p$. We endow the plane $\mathbb{R}^{2}$ with
a norm which makes $\mathcal{P}$ the unit ball. In other words, given $v\in 
\mathbb{R}^{2}$, write $v=tp$ for some $t\geq 0$ and some $p$ in the
boundary of $\mathcal{P}$, and define $||v||_{\mathcal{P}}=t$.

Denoting $e_{r}=(\cos \theta ,\sin \theta )$ and $e_{\theta }=(-\sin \theta
,\cos \theta )$ we parameterize $p$ by $p(\theta )$, $0\leq \theta \leq 2\pi 
$, such that $p^{\prime }(\theta )$ is a non-negative multiple of $e_{\theta
}$, i.e., the angle between the $x$-axis and $p^{\prime }\left( \theta
\right) $ is $\theta +\pi /2$. We can write%
\begin{align}
p(\theta )& =a(\theta )e_{r}+a^{\prime }(\theta )e_{\theta }
\label{eqn:Mink} \\
p^{\prime }\left( \theta \right) & =\left( a\left( \theta \right) +a^{\prime
\prime }\left( \theta \right) \right) e_{\theta }  \notag \\
\left[ p,p^{\prime }\right] & =a\left( \theta \right) \left( a\left( \theta
\right) +a^{\prime \prime }\left( \theta \right) \right)  \notag
\end{align}%
where $a(\theta )$ is the support function of $\mathcal{P}$. Furthermore, we
shall assume $a(\theta )+a^{\prime \prime }(\theta )>0$ for each $0\leq
\theta \leq 2\pi $, which is equivalent to say that the curvature of $p$ is
strictly positive.

The dual unity ball $\mathcal{P}^{\ast }$ can be naturally identified with a
convex set $\mathcal{Q}$ in the plane with $p^{\ast }(w)=[w,q]$ for any $%
w\in \mathbb{R}^{2}$. One can see that 
\begin{eqnarray}
q(\theta ) &=&\frac{p^{\prime }(\theta )}{[p(\theta ),p^{\prime }(\theta )]}=%
\frac{1}{a\left( \theta \right) }e_{\theta }  \label{eqn:2} \\
q^{\prime }\left( \theta \right) &=&-\frac{1}{a\left( \theta \right) }e_{r}-%
\frac{a^{\prime }\left( \theta \right) }{a^{2}\left( \theta \right) }%
e_{\theta }  \notag \\
\left[ q,q^{\prime }\right] &=&a^{-2}\left( \theta \right)  \notag
\end{eqnarray}%
is a parameterization of the boundary of $\mathcal{Q}$. It is not difficult
to see that $q$ is a convex symmetric curve with strictly positive curvature
as well. It also holds that 
\begin{equation}
p(\theta )=-\frac{q^{\prime }(\theta )}{[q(\theta ),q^{\prime }(\theta )]}%
=-a^{2}q^{\prime }\left( \theta \right)  \label{eqn:3}
\end{equation}

Given another closed, strictly convex curve $\gamma $, we can parameterize
it by $\theta $ such that $\gamma ^{\prime }(\theta )=\lambda (\theta
)q(\theta )$ (in fact, anytime we use the notation $f^{\prime }$ we mean
derivative with respect to this parameter $\theta $). The Minkowski $%
\mathcal{Q}$-length $L_{\mathcal{Q}}$ of $\gamma $ is defined as 
\begin{equation*}
L_{\mathcal{Q}}(\gamma )=\int_{0}^{2\pi }\lambda (\theta )\ d\theta
\end{equation*}%
which inspires another useful parameterization of $\gamma $ by its $\mathcal{%
Q}$-arclength parameter $s$:%
\begin{equation*}
s\left( \theta \right) =\int_{0}^{\theta }\lambda \left( \sigma \right)
d\sigma
\end{equation*}%
Sometimes we will need a third different parameterization $\gamma \left(
u\right) $ for such a curve. In that case, we define $v=\frac{ds}{du}$ so we
can write%
\begin{equation*}
ds=vdu=\lambda d\theta
\end{equation*}

If a $\mathcal{P}$-circle is tangent to $\gamma $ at $\gamma \left( \theta
\right) $, the line joining its center to $\gamma \left( \theta \right) $
must be parallel to $p\left( \theta \right) $. Thus, it is natural to define
the minkowskian unit normal to the curve $\gamma $ at the point $\gamma
(\theta )$ as $p(\theta )$. The inverse of the radius of a $\mathcal{P}$%
-circle which has a $3$-point contact with $\gamma $ at $\gamma \left(
\theta \right) $ is the \emph{minkowskian curvature}%
\begin{equation}
k(\theta )=\left[ p,p^{\prime }\right] \frac{d\theta }{ds}=\frac{%
[p,p^{\prime }]}{\lambda (\theta )}.  \label{eqn:5}
\end{equation}%
Other notions of minkowskian curvature are possible -- in \cite{petty} $%
k\left( \theta \right) $ is called "circular curvature" (see also \cite%
{craizer} and \cite{tabachnikov}).

Define the support function $f:[0,2\pi ]\rightarrow \mathbb{R}$ of $\gamma $
by $f(\theta )=[\gamma (\theta ),q(\theta )]$. Notice that we can take $f$
naturally on the parameter $s$. We have

\begin{prop}
\label{prop1} The following equalities hold:

\begin{description}
\item[(a)] $\displaystyle\int_0^{L_{\mathcal{Q}}}k(s)ds = 2A(\mathcal{P})$;

\item[(b)] $\displaystyle\int_{0}^{L_{\mathcal{Q}}}f(s)ds=2A$; and

\item[(c)] $\displaystyle\int_{0}^{L_{\mathcal{Q}}}f(s)k(s)ds=L_{\mathcal{Q}%
} $
\end{description}
\end{prop}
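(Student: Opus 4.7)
The plan is to systematically translate everything to integrals in $\theta$ via $ds=\lambda\,d\theta$ and $k\lambda=[p,p']$ (equation (2.5)), and then exploit the dual basis $\{p(\theta),q(\theta)\}$, which satisfies $[p,q]=1$ because of (2.2).

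For (a), I would simply change variables: $\int_{0}^{L_{\mathcal{Q}}}k\,ds=\int_{0}^{2\pi}k\lambda\,d\theta=\int_{0}^{2\pi}[p,p']\,d\theta$. The right-hand side is twice the area swept by $p$ as $\theta$ runs over $[0,2\pi]$, i.e.\ $2A(\mathcal{P})$, by the standard shoelace/area formula applied to the boundary parameterization $p(\theta)$ of $\mathcal{P}$. For (b), the same standard area formula applied to $\gamma$ gives $2A=\int_{0}^{2\pi}[\gamma,\gamma']\,d\theta$; since $\gamma'=\lambda q$, this equals $\int_{0}^{2\pi}\lambda[\gamma,q]\,d\theta=\int_{0}^{2\pi}f\lambda\,d\theta=\int_{0}^{L_{\mathcal{Q}}}f\,ds$.

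Part (c) is the one requiring real work, and the main obstacle is relating $\lambda$ to $f$ and $[p,p']$. My plan is to expand $\gamma$ in the dual basis as $\gamma=fp+g\,q$; here the coefficient of $p$ is indeed $f=[\gamma,q]$ because $[p,q]=1$. Using (2.2) one checks $p'=[p,p']\,q$, and from (2.3) one gets $q'=-\tfrac{1}{a^{2}}p$. Differentiating the expansion,
\begin{equation*}
\gamma'=\left(f'-\tfrac{g}{a^{2}}\right)p+\left(f[p,p']+g'\right)q.
\end{equation*}
Imposing $\gamma'=\lambda q$ forces $g=a^{2}f'$ and
\begin{equation*}
\lambda=f[p,p']+g'=f[p,p']+(a^{2}f')'.
\end{equation*}

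Finally, integrate over $\theta\in[0,2\pi]$: the term $(a^{2}f')'$ vanishes by periodicity, leaving $L_{\mathcal{Q}}=\int_{0}^{2\pi}\lambda\,d\theta=\int_{0}^{2\pi}f\,[p,p']\,d\theta$. Converting back through $ds=\lambda\,d\theta$ and $k=[p,p']/\lambda$, the right-hand side is exactly $\int_{0}^{L_{\mathcal{Q}}}fk\,ds$, proving (c). Thus the only non-routine step is the dual-basis expansion together with the identities $p'=[p,p']q$ and $q'=-p/a^{2}$, which together kill the non-$q$ component of $\gamma'$ and produce the clean relation for $\lambda$.
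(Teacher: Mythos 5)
Parts (a) and (b) match the paper's proof essentially verbatim. For (c), you take a genuinely different route. The paper simply writes $[\gamma,q][p,p']=[\gamma,p']$ (using $p'=[p,p']q$), integrates $[\gamma,p']$ by parts over $[0,2\pi]$ to swap the derivative onto $\gamma$, and then uses $[p,\gamma']=[p,\lambda q]=\lambda$. You instead expand $\gamma$ in the dual basis $\{p,q\}$ and derive the structural identity $\lambda=f[p,p']+(a^{2}f')'$ before integrating. Your computation is correct: $[p,q]=1$ from (\ref{eqn:2}), $p'=[p,p']q$, $q'=-p/a^{2}$ from (\ref{eqn:3}), and killing the $p$-component of $\gamma'$ does force $g=a^{2}f'$, after which the total-derivative term drops out upon integration by periodicity. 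The trade-off is that the paper's argument is shorter and purely local (one integration by parts, no need to identify $g$), whereas your argument produces as a by-product the Minkowski analogue of the classical relation $\frac{ds}{d\theta}=h+h''$ between support function and arclength element, which is a reusable identity and clarifies the geometric content of (c). Both are valid; yours front-loads more structure for essentially the same total effort.
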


\begin{proof}
Since $ds=\lambda d\theta $, equation (\ref{eqn:5}) yields%
\begin{equation*}
\int_{0}^{L_{\mathcal{Q}}}k(s)ds=\int_{0}^{2\pi }k(\theta )\lambda (\theta
)d\theta =\int_{0}^{2\pi }[p(\theta ),p^{\prime }(\theta )]d\theta =2A(%
\mathcal{P})
\end{equation*}%
and this proves \textbf{(a)}. For \textbf{(b)} we calculate 
\begin{equation*}
\int_{0}^{L_{\mathcal{Q}}}f(s)ds=\int_{0}^{2\pi }[\gamma (\theta ),q(\theta
)]\lambda (\theta )d\theta =\int_{0}^{2\pi }[\gamma (\theta ),\gamma
^{\prime }(\theta )]d\theta =2A
\end{equation*}%
Now, for \textbf{(c)}, 
\begin{align*}
\int_{0}^{L_{\mathcal{Q}}}f(s)k(s)ds& =\int_{0}^{2\pi }[\gamma (\theta
),q(\theta )]k(\theta )\lambda (\theta )d\theta =\int_{0}^{2\pi }[\gamma
(\theta ),q(\theta )][p(\theta ),p^{\prime }(\theta )]d\theta = \\
& =\int_{0}^{2\pi }[\gamma (\theta ),p^{\prime }(\theta )]d\theta
=\int_{0}^{2\pi }[p(\theta ),\gamma ^{\prime }(\theta )]d\theta
=\int_{0}^{2\pi }\lambda (\theta )d\theta =L_{\mathcal{Q}}
\end{align*}
\end{proof}

\section{\label{SecIso}Some Isoperimetric Inequalities}

Consider again a smooth, closed and convex curve $\gamma $ with $\mathcal{Q}$%
-length $L_{\mathcal{Q}}$ enclosing the (usual) area $A$. The following
isoperimetric inequality generalizes the classical euclidean one (see Cap.4
of\textbf{\ \cite{thompson}}): 
\begin{equation}
\frac{L_{\mathcal{Q}}^{2}}{A}\geq 4A(\mathcal{P}),  \label{eqn:isop}
\end{equation}%
where $A(\mathcal{P})$ is the usual area of the unit $\mathcal{P}$-ball. As
in the euclidean case, the equality holds if and only if the curve is the
boundary of some $\mathcal{P}$-ball.

\subsection{The Minkowskian Gage Inequality}

We now turn our attention to prove a version of the Gage's inequality (see 
\textbf{\cite{gage1}}) in the Minkowski plane. Let $\mathcal{C}$ be the
space of smooth, simple, closed and strictly convex curves in the plane
endowed with the Hausdorff topology. We have:

\begin{teo}
\label{teoGage}There exists a non-negative, continuous, scale-invariant
functional $F:\mathcal{C}\rightarrow \mathbb{R}$ such that%
\begin{equation*}
\left( 1-F(\gamma )\right) \int_{0}^{L_{\mathcal{Q}}}k^{2}ds-A(\mathcal{P})%
\frac{L_{\mathcal{Q}}}{A}\geq 0,
\end{equation*}%
where $A$, $L_{\mathcal{Q}}$ and $k$ are the area, $\mathcal{Q}$-length and
curvature of $\gamma $. Moreover $F\left( \gamma \right) =0$ if and only if $%
\gamma $ is a $\mathcal{P}$-circle.
\end{teo}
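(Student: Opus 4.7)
The plan is to derive the desired inequality from Cauchy--Schwarz applied to Proposition \ref{prop1}(c) and to absorb the geometric content into a judicious choice of $F$. From $\int_{0}^{L_{\mathcal{Q}}} fk\,ds = L_{\mathcal{Q}}$ and the Cauchy--Schwarz inequality,
\[
L_{\mathcal{Q}}^{2} \;\leq\; \Bigl(\int_{0}^{L_{\mathcal{Q}}} f^{2}\,ds\Bigr)\Bigl(\int_{0}^{L_{\mathcal{Q}}} k^{2}\,ds\Bigr),
\]
so I would set
\[
F(\gamma) \;:=\; 1 \;-\; \frac{A(\mathcal{P})}{A\,L_{\mathcal{Q}}}\int_{0}^{L_{\mathcal{Q}}} f^{2}\,ds.
\]
Multiplying the displayed Cauchy--Schwarz by the positive factor $A(\mathcal{P})/(A\,L_{\mathcal{Q}})$ then produces exactly $(1-F)\int k^{2}\,ds \geq A(\mathcal{P})\,L_{\mathcal{Q}}/A$, so the main inequality of the theorem is automatic once $F$ is chosen this way.

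The easy half of the remaining work is to verify that this $F$ is scale-invariant and continuous on $\mathcal{C}$. Under $\gamma \mapsto c\gamma$ one has $A \mapsto c^{2}A$, $L_{\mathcal{Q}} \mapsto c\,L_{\mathcal{Q}}$, $f \mapsto c f$ and $ds \mapsto c\,ds$, so both $\int f^{2}\,ds$ and $A\,L_{\mathcal{Q}}$ scale as $c^{3}$ and their quotient is invariant. Continuity is a routine consequence of the fact that the support function, the area, and the $\mathcal{Q}$-perimeter of a smooth strictly convex curve all depend continuously on $\gamma$ in the Hausdorff topology.

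The main obstacle is the non-negativity of $F$, which is equivalent to the Minkowskian Gage support-function estimate
\[
\int_{0}^{L_{\mathcal{Q}}} f^{2}\,ds \;\leq\; \frac{A\,L_{\mathcal{Q}}}{A(\mathcal{P})}.
\]
This is where the geometry of the unit ball really enters and is, in my reading, the natural content of the second subsection of Section \ref{SecIso}. In the Euclidean case ($A(\mathcal{P})=\pi$, $\mathcal{Q}=\mathcal{P}=$ unit disk, and $f$ the ordinary support function) this specializes to Gage's classical inequality $\int p^{2}\,ds \leq LA/\pi$, proved by translating $\gamma$ so that a Steiner-like origin kills the first Fourier harmonics of $f$ and then invoking Wirtinger/Bonnesen-type estimates. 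In the anisotropic setting I would try to imitate this in the $\theta$-parametrization supplied by the dual curve $q$, with a choice of origin adapted to $\mathcal{Q}$ rather than to the circle, and then run a Bonnesen-type argument with $\mathcal{P}$-balls in place of Euclidean discs. The equality clause of the theorem will follow by tracking when both Cauchy--Schwarz and this support estimate saturate: the former forces $f$ to be a constant multiple of $k$, and a direct check using Proposition \ref{prop1}(a)--(c) shows this constant must be $A/A(\mathcal{P})$ and pins $\gamma$ down as a $\mathcal{P}$-circle.
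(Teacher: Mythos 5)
Your reduction to Cauchy--Schwarz is correct and the algebraic manipulation is exactly what the paper does at the very end of its argument. However, your choice
\[
F(\gamma) := 1 - \frac{A(\mathcal{P})}{A\,L_{\mathcal{Q}}}\int_{0}^{L_{\mathcal{Q}}} f^{2}\,ds
\]
is the \emph{extremal} functional (the one for which the main inequality becomes an identity once Cauchy--Schwarz is applied), and this merely relocates all of the difficulty into two places where your sketch has real gaps.

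First, non-negativity of your $F$ is the support-function estimate $\int f^{2}\,ds \leq A L_{\mathcal{Q}}/A(\mathcal{P})$, and this is not a formal consequence of anything you have written -- it is the hard geometric content. The paper does not attempt to prove it directly on $\mathcal{C}$. Instead it works with the explicit Bonnesen-type functional $E(\gamma)=1+A(\mathcal{P})r_{\mathrm{in}}r_{\mathrm{out}}/A-2A(\mathcal{P})(r_{\mathrm{in}}+r_{\mathrm{out}})/L_{\mathcal{Q}}$ on the class $\mathcal{C}_{s}$ of centrally symmetric curves (where the origin is canonically the center and $r_{\mathrm{in}}\leq f\leq r_{\mathrm{out}}$ pointwise), obtains $L_{\mathcal{Q}}A(1-E)\geq A(\mathcal{P})\int f^{2}ds$ and $E\geq 0$ by combining the Bonnesen inequality (Theorem~\ref{lemma3}) with a convexity trick, and then extends to all of $\mathcal{C}$ by Gage's chord-and-reflection device. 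Your proposal appeals to ``a Steiner-like origin'' and ``a Bonnesen-type argument with $\mathcal{P}$-balls'' but never specifies the origin on a non-symmetric curve nor proves the estimate there; this is precisely the issue the reflection trick was invented to avoid, and without it your $F$ is not even unambiguously defined as a functional on $\mathcal{C}$ (the support function depends on the choice of origin).

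Second, your treatment of the ``$F=0$ iff $\mathcal{P}$-circle'' clause is misattributed. For your $F$, the condition $F(\gamma)=0$ means saturation of the support estimate $\int f^{2}\,ds = A L_{\mathcal{Q}}/A(\mathcal{P})$, which has nothing to do with the equality case of Cauchy--Schwarz (that would be relevant to the equality case of Corollary~\ref{teo1}, not of Theorem~\ref{teoGage}). The hard direction -- that saturation forces $\gamma$ to be a $\mathcal{P}$-circle -- is exactly the rigidity statement the paper proves as Lemma~\ref{teo01} via the curvature-radius and inner-parallel-curve analysis, and your sketch does not engage with it at all. In short: your plan is structurally sound and would, if completed, run in parallel to the paper's, but the two steps you flag as ``the main obstacle'' and ``a direct check'' are precisely the Bonnesen inequality, Lemma~\ref{teo01}, and the chord-reflection extension that constitute the substance of the paper's Section~\ref{SecIso}, and none of them has actually been carried out.
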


\begin{coro}
\label{teo1} Given $\gamma \in \mathcal{C}$, we have%
\begin{equation}
\int_{0}^{L_{\mathcal{Q}}}k^{2}ds-A(\mathcal{P})\frac{L_{\mathcal{Q}}}{A}%
\geq 0.  \label{eqn:12}
\end{equation}%
with equality if and only if $\gamma $ is a $\mathcal{P}$-circle.
\end{coro}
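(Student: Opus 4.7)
The plan is to derive Corollary \ref{teo1} as an immediate consequence of Theorem \ref{teoGage}, using only the sign of $F$ and Proposition \ref{prop1}(a). Since $F(\gamma) \geq 0$ and $\int_0^{L_{\mathcal{Q}}} k^2 \, ds \geq 0$ trivially, I can sandwich
$$\int_0^{L_{\mathcal{Q}}} k^2 \, ds \;\geq\; \bigl(1 - F(\gamma)\bigr)\int_0^{L_{\mathcal{Q}}} k^2 \, ds \;\geq\; A(\mathcal{P})\,\frac{L_{\mathcal{Q}}}{A},$$
where the second inequality is exactly the theorem. This is inequality \eqref{eqn:12}.

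For the equality case both directions must be addressed. First, assume equality in \eqref{eqn:12}. Then the two inequalities above must both collapse to equalities, which forces $F(\gamma) \cdot \int k^2 \, ds = 0$. By Proposition \ref{prop1}(a), $\int k \, ds = 2A(\mathcal{P}) > 0$, so $k$ does not vanish identically and $\int k^2 \, ds > 0$; hence $F(\gamma) = 0$, and by the theorem $\gamma$ is a $\mathcal{P}$-circle. For the converse I would verify equality directly on the unit ball $\gamma = p$: formula \eqref{eqn:Mink} gives $\lambda = a(a+a'')$ and $k = [p,p']/\lambda \equiv 1$, so $\int k^2 \, ds = L_{\mathcal{Q}} = 2A(\mathcal{P})$ while $A = A(\mathcal{P})$, making both sides of \eqref{eqn:12} equal to $2A(\mathcal{P})$. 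Since both sides are homogeneous of degree $-1$ under the rescaling $\gamma \mapsto c\gamma$, equality propagates to every $\mathcal{P}$-circle.

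No serious obstacle is expected: all the analytic content has been absorbed into Theorem \ref{teoGage}, and what remains is just the two-line sandwich above together with the direct check on $\mathcal{P}$-circles. The one small subtlety is the need to rule out $\int k^2 \, ds = 0$ in the equality discussion, and this is precisely what Proposition \ref{prop1}(a) provides.
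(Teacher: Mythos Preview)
Your proposal is correct and matches the paper's intended argument: the corollary is stated immediately after Theorem \ref{teoGage} with no separate proof, precisely because the inequality follows at once from $F(\gamma)\geq 0$, and the equality characterization from $F(\gamma)=0\iff\gamma$ is a $\mathcal{P}$-circle. You have simply made explicit the sandwich argument and the equality analysis that the paper leaves to the reader; the use of Proposition \ref{prop1}(a) to rule out $\int k^2\,ds=0$ is fine, though for $\gamma\in\mathcal{C}$ strict convexity already gives $k>0$ pointwise.
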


In order to prove this, we need many results. We start by recalling an
useful Bonnesen inequality whose proof can be found in Theorem 4.5.5 of \cite%
{thompson}:

\begin{teo}
\label{lemma3} For $\gamma \in \mathcal{C}$, let $r_{\mathrm{in}}$ be the
radius of the biggest inscribed $\mathcal{P}$-circle and $r_{\mathrm{out}}$
the radius of the smallest circumscribed $\mathcal{P}$-circle. Then 
\begin{equation}
rL_{\mathcal{Q}}-A-A(\mathcal{P})r^{2}\geq 0  \label{eqn:14}
\end{equation}%
whenever $r_{\mathrm{in}}\leq r\leq r_{\mathrm{out}}$.
\end{teo}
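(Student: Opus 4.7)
The plan is to mimic the classical proof of Bonnesen's inequality, replacing Euclidean quantities with their Minkowskian analogues.

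First, I would verify the Minkowskian Steiner formula
\begin{equation*}
A(K + t\mathcal{P}) = A + t L_{\mathcal{Q}} + t^2 A(\mathcal{P})
\end{equation*}
by parameterizing the boundary of the Minkowski sum as $\gamma(\theta)+tp(\theta)$, expanding $\tfrac12\int[\gamma+tp,\gamma'+tp']\,d\theta$, and using that $[p,q]=1$ together with $\gamma'=\lambda q$ to identify the cross term with $L_{\mathcal{Q}}$ (the $t^0$ and $t^2$ terms give $A$ and $A(\mathcal{P})$ directly from Proposition \ref{prop1}).

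Second, I would observe that the expression to be proved non-negative is a concave (downward) parabola in $r$:
\begin{equation*}
\phi(r) = rL_{\mathcal{Q}} - A - r^2 A(\mathcal{P}) = -A(\mathcal{P})(r - r_-)(r - r_+),
\end{equation*}
where $r_\pm$ are the roots of $A(\mathcal{P})r^2 - L_{\mathcal{Q}}r + A = 0$ (note that the isoperimetric inequality (\ref{eqn:isop}) ensures these roots are real). Hence $\phi(r)\geq 0$ exactly on $[r_-,r_+]$, and the theorem reduces to proving the inclusion $[r_{\mathrm{in}},r_{\mathrm{out}}]\subseteq[r_-,r_+]$, that is, $r_-\leq r_{\mathrm{in}}$ and $r_{\mathrm{out}}\leq r_+$.

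Third, I would establish these two bounds via a kinematic argument. For any $r\in[r_{\mathrm{in}},r_{\mathrm{out}}]$ no translate $c+r\mathcal{P}$ is contained in $K$ nor contains $K$, so the set of translations $c$ for which $\partial(c+r\mathcal{P})$ properly crosses $\partial K$ has positive measure, and each such $c$ contributes at least two intersection points. A Minkowskian Cauchy--Crofton type formula then relates the integrated crossing count, on one hand, to $rL_{\mathcal{Q}}$ (from the perimeters involved) and, on the other, to $A+r^2 A(\mathcal{P})$ (via the Steiner formula applied to the positions where the disk is inside $K$ or contains $K$), producing the desired bound.

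The main obstacle is the third step: obtaining the correct form of the Minkowskian kinematic formula. The Euclidean analogue $\int\#(\partial K\cap\partial(c+rD))\,dc = 4rL$ relies on isotropy of the rotation group, which is unavailable in a general Minkowski plane. One must instead work directly with the specific geometry tying $\mathcal{P}$ to its dual $\mathcal{Q}$, which is precisely what is done in the cited Theorem 4.5.5 of Thompson's book. A straightforward manipulation of inner and outer parallel bodies (e.g.\ applying the Steiner formula to $K\ominus r\mathcal{P}$) only yields $\phi(r)\leq 0$ on $[0,r_{\mathrm{in}})$, which is the wrong sign, confirming that a genuine integral-geometric input is required beyond the algebra of mixed areas.
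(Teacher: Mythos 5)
The paper itself does not prove this theorem: it simply cites Theorem 4.5.5 of Thompson's \emph{Minkowski Geometry} and moves on. So there is no internal proof to compare against; what follows is an assessment of your proposed argument on its own terms.

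Your first two steps are correct: the Minkowskian Steiner formula $A(K+t\mathcal{P})=A+tL_{\mathcal{Q}}+t^{2}A(\mathcal{P})$ holds exactly as you describe (the cross term is $L_{\mathcal{Q}}=2A(K,\mathcal{P})$, the mixed area), and the reduction to the inclusion $[r_{\mathrm{in}},r_{\mathrm{out}}]\subseteq[r_-,r_+]$ is the right reformulation. The gap you identify in step three is genuine, but your worry that it is blocked by the lack of rotational isotropy is misplaced. One does not need a rotation-invariant Poincar\'e formula; one needs only a \emph{translational} Crofton count, and in the Minkowski plane the central symmetry of $\mathcal{P}$ is exactly enough to make it come out clean. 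Concretely, apply the coarea formula to the map $\Phi:\partial K\times r\,\partial\mathcal{P}\to\mathbb{R}^{2}$, $\Phi(y,z)=y-z$; writing $y=\gamma(\theta_1)$, $z=r\,p(\theta_2)$ the Jacobian is $r\,\lambda(\theta_1)[p,p'](\theta_2)\left|[q(\theta_1),q(\theta_2)]\right|$, and since $q(\theta)=e_\theta/a(\theta)$ and $[p,p'](\theta_2)=a(\theta_2)(a(\theta_2)+a''(\theta_2))$ one computes
\begin{equation*}
\int_{0}^{2\pi}\frac{(a(\theta_2)+a''(\theta_2))\left|\sin(\theta_2-\theta_1)\right|}{a(\theta_1)}\,d\theta_2=\frac{4a(\theta_1)}{a(\theta_1)}=4,
\end{equation*}
where the middle equality uses two integrations by parts together with the $\pi$-periodicity $a(\theta+\pi)=a(\theta)$ coming from the symmetry of $\mathcal{P}$. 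Hence $\int_{\mathbb{R}^{2}}n(c)\,dc=4rL_{\mathcal{Q}}$, and combining this with $n(c)\geq 2$ for every $c$ in the crossing set of measure $A(K+r\mathcal{P})$ yields $4rL_{\mathcal{Q}}\geq 2(A+rL_{\mathcal{Q}}+r^{2}A(\mathcal{P}))$, which is (\ref{eqn:14}). So the approach is correct, and the missing formula is supplied by the coarea computation above rather than by rotational averaging.

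One inaccuracy in your closing remark: the inner parallel body argument does \emph{not} give the wrong sign at $r_{\mathrm{in}}$. Using Proposition \ref{prop:ALr} and Corollary \ref{cor:Lr} one has $A=\int_{0}^{r_{\mathrm{in}}}L_{\mathcal{Q}}(r)\,dr\leq\int_{0}^{r_{\mathrm{in}}}\bigl(L_{\mathcal{Q}}-2A(\mathcal{P})r\bigr)\,dr$, which is exactly $\phi(r_{\mathrm{in}})\geq 0$; this is in fact how the paper characterizes the equality case in Lemma \ref{teo01}. What the parallel-body argument alone cannot reach is $\phi(r_{\mathrm{out}})\geq 0$, and that is the reason the kinematic input is still needed for the full interval.
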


\begin{lemma}
\label{teo01} The equality in (\ref{eqn:14}) holds for $r=r_{\mathrm{in}}$
if and only if $\gamma $ is homothetic to the $\mathcal{P}$-circle.
\end{lemma}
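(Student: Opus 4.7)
The plan is as follows. The ``if'' direction is immediate: if $\gamma$ is a translate of $r_{\mathrm{in}}\,\partial\mathcal{P}$, then $A = r_{\mathrm{in}}^{2}A(\mathcal{P})$ and $L_{\mathcal{Q}} = 2r_{\mathrm{in}}A(\mathcal{P})$, so both sides of (\ref{eqn:14}) collapse to zero. For the converse, I would translate $\gamma$ so that an inscribed copy of $r_{\mathrm{in}}\mathcal{P}$ is centered at the origin, let $h$ and $a$ denote the Euclidean support functions of $\gamma$ and $\mathcal{P}$ respectively, and set $\phi = h - r_{\mathrm{in}}\,a$. The inclusion $r_{\mathrm{in}}\mathcal{P}\subseteq\gamma$ yields $\phi\geq 0$, with $\phi$ vanishing exactly at the directions where $\gamma$ touches the inscribed disk.

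Using the classical support-function identities
\[
2A=\int_{0}^{2\pi}\bigl(h^{2}-(h')^{2}\bigr)\,d\theta,\quad L_{\mathcal{Q}}=\int_{0}^{2\pi}(ah-a'h')\,d\theta,\quad 2A(\mathcal{P})=\int_{0}^{2\pi}\bigl(a^{2}-(a')^{2}\bigr)\,d\theta,
\]
a short algebraic manipulation shows that for every real $r$
\[
rL_{\mathcal{Q}}-A-r^{2}A(\mathcal{P})=\tfrac{1}{2}\int_{0}^{2\pi}\bigl[(h'-ra')^{2}-(h-ra)^{2}\bigr]\,d\theta.
\]
Specialising to $r = r_{\mathrm{in}}$, the equality hypothesis in~(\ref{eqn:14}) becomes $\int_{0}^{2\pi}(\phi')^{2}\,d\theta=\int_{0}^{2\pi}\phi^{2}\,d\theta$.

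The decisive geometric input is that the inscribed disk is \emph{optimally} centered, which forces the origin to lie in the convex hull of the outer unit normals at the touching points; equivalently, the open complement of the zero set of $\phi$ in $[0,2\pi]$ decomposes into arcs each of length at most $\pi$. On each such arc $I$ of length $\ell\leq\pi$ (with $\phi=0$ at both endpoints) the sharp Poincar\'e inequality gives $\int_{I}\phi^{2}\leq(\ell/\pi)^{2}\int_{I}(\phi')^{2}\leq\int_{I}(\phi')^{2}$, with equality iff $\phi(\theta)=C\sin\bigl(\pi(\theta-\theta_{I})/\ell\bigr)$ on $I$. Summing and combining with the identity above forces equality on every arc; but the smoothness of $\gamma$ (hence of $\phi$) together with $\phi\geq 0$ makes $\phi'$ vanish at every zero of $\phi$, which is incompatible with any non-zero sinusoid of the above form. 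Hence $\phi\equiv 0$ on each arc, and therefore on $[0,2\pi]$.

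Since $\phi\equiv 0$ means $h=r_{\mathrm{in}}\,a$, we conclude that $\gamma$ is a translate of $r_{\mathrm{in}}\,\partial\mathcal{P}$, i.e.\ homothetic to the $\mathcal{P}$-circle. I expect the main obstacle to lie in the Poincar\'e step: rigorously linking the optimality of the incircle centering to the ``max arc length $\leq\pi$'' condition (including degenerate cases of exactly two antipodal touchings or of whole sub-arcs of tangency), and verifying that smoothness of $\gamma$ indeed excludes the non-smooth sinusoidal equality profiles, which is what rules out stadium-type counterexamples.
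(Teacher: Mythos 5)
Your proof is correct and takes a genuinely different route from the paper. The paper argues via inner parallel curves: it introduces the minimum $\mathcal{P}$-curvature radius $\mu_{0}$, shows $\mu_{0}\leq r_{\mathrm{in}}$ with equality only for $\mathcal{P}$-circles (Lemma \ref{lemma:rinmu0}), shows that for $r>\mu_{0}$ the eroded curve $C_{r}$ develops corners so that $L_{\mathcal{Q}}(r)<L_{\mathcal{Q}}-2A(\mathcal{P})r$ (Corollary \ref{cor:Lr}), and then uses $A=\int_{0}^{r_{\mathrm{in}}}L_{\mathcal{Q}}(r)\,dr$ to conclude that equality in (\ref{eqn:14}) forces $r_{\mathrm{in}}\leq\mu_{0}$, hence $r_{\mathrm{in}}=\mu_{0}$. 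Your argument is instead the classical support-function/Wirtinger route adapted to the mixed Minkowski setting: the algebraic identity you write is correct (indeed it re-proves inequality (\ref{eqn:14}) itself), and the geometric input --- that maximality of the inscribed $\mathcal{P}$-circle forces the contact set not to fit in any open half-circle, hence the complementary arcs all have length at most $\pi$ --- is exactly the right lemma and is rigorous once one notes the zero set of $\phi$ is closed, so confinement in an open half-circle would permit translating and then strictly enlarging the inscribed ball. Your worry about the equality cases resolves cleanly: on any arc of length strictly less than $\pi$ the Wirtinger constant $(\ell/\pi)^{2}<1$ forces $\phi\equiv 0$ there, contradicting that the arc lies in the complement of the zero set; thus any surviving arc has length exactly $\pi$ (so at most two arcs), and at their endpoints the sinusoid $C\sin\bigl(\pi(\theta-\theta_{I})/\ell\bigr)$ has nonzero one-sided slope, while $\phi\in C^{1}$ and $\phi\geq 0$ force $\phi'=0$ at every zero of $\phi$ --- a contradiction unless $C=0$. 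One point in your favour: the paper's proof explicitly assumes $\gamma$ symmetric (the proof of Lemma \ref{lemma:rinmu0} invokes symmetry), whereas your argument does not, so it covers the general $\gamma\in\mathcal{C}$ as actually stated. What the paper's route buys instead is the Minkowskian erosion picture, which also makes visible exactly where and why the lemma fails for non-smooth unit balls, as in the Remark following the proof.
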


We have not seen a proof of this lemma in the literature, so we prove it in
the next subsection. Now let us begin to build the functional $F\left(
\gamma \right) $ of Gage%
%TCIMACRO{\U{b4}}%
%BeginExpansion
\'{}%
%EndExpansion
s inequality:

\begin{prop}
\label{teo2} Consider the space $\mathcal{C}_{s}$ consisting of curves in $%
\mathcal{C}$ which are symmetric. Define the functional $E:\mathcal{C}%
_{s}\rightarrow \mathbb{R}$ by 
\begin{equation*}
E(\gamma )=1+\frac{A(\mathcal{P})r_{\mathrm{in}}r_{\mathrm{out}}}{A}-\frac{%
2A(\mathcal{P})(r_{\mathrm{in}}+r_{\mathrm{out}})}{L_{\mathcal{Q}}}
\end{equation*}%
Then, the following hold:\newline
\textbf{(1)} $L_{\mathcal{Q}}A\left( 1-E(\gamma )\right) \geq A(\mathcal{P})%
\displaystyle\int_{0}^{L_{\mathcal{Q}}}f^{2}ds$, for all $\gamma \in 
\mathcal{C}_{s}$;\newline
\textbf{(2)} $E(\gamma )\geq 0$ and equality holds if and only if $\gamma $
is a $\mathcal{P}$-circle; and\newline
\textbf{(3)} If $\gamma _{j}$ is a sequence in $\mathcal{C}_{s}$ such that $%
\displaystyle\lim_{j\rightarrow \infty }E(\gamma _{j})=0$ and if the
sequence of the normalized curves $\eta _{j}=\gamma _{j}\displaystyle\sqrt{%
\frac{A(\mathcal{P})}{A}}$ is contained in some bounded region of the plane,
then the region $H_{j}$ enclosed by $\eta _{j}$ converges in the Hausdorff
metric to $\mathcal{P}$, as $j\rightarrow \infty $.
\end{prop}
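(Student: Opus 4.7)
The plan is to treat parts (1), (2), (3) with different tools. Part (1) will reduce to an elementary pointwise inequality on the support function; part (2) is algebraic once Bonnesen's inequality is viewed as a quadratic constraint; part (3) is a Blaschke selection argument using continuity of $E$.

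For \textbf{(1)}, since $\gamma$ is origin-symmetric and $\mathcal{P}$ is symmetric, both the largest inscribed and smallest circumscribed $\mathcal{P}$-balls are centered at the origin, so $r_{\mathrm{in}} \le f(\theta) \le r_{\mathrm{out}}$ for all $\theta$. Then
$$0 \le \int_0^{L_{\mathcal{Q}}}(r_{\mathrm{out}}-f)(f-r_{\mathrm{in}})\,ds = (r_{\mathrm{in}}+r_{\mathrm{out}})\int_0^{L_{\mathcal{Q}}} f\,ds - r_{\mathrm{in}} r_{\mathrm{out}} L_{\mathcal{Q}} - \int_0^{L_{\mathcal{Q}}} f^2\,ds,$$
and substituting $\int f\,ds = 2A$ from Proposition \ref{prop1}(b) gives $2A(r_{\mathrm{in}}+r_{\mathrm{out}}) - L_{\mathcal{Q}} r_{\mathrm{in}} r_{\mathrm{out}} \ge \int f^2\,ds$. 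A straightforward manipulation shows the left-hand side equals $L_{\mathcal{Q}} A(1-E(\gamma))/A(\mathcal{P})$, which after multiplying by $A(\mathcal{P})$ yields the claim.

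For \textbf{(2)}, I read Bonnesen's inequality as the assertion that the concave quadratic $\varphi(r) = -A(\mathcal{P}) r^2 + L_{\mathcal{Q}} r - A$ is nonnegative on $[r_{\mathrm{in}}, r_{\mathrm{out}}]$. Denoting its real roots by $r_1 \le r_2$, one obtains $r_1 \le r_{\mathrm{in}} \le r_{\mathrm{out}} \le r_2$, $r_1+r_2 = L_{\mathcal{Q}}/A(\mathcal{P})$, and $r_1 r_2 = A/A(\mathcal{P})$. Substituting into the definition of $E$ and clearing denominators produces the identity
$$E(\gamma) \cdot r_1 r_2 (r_1+r_2) = r_1(r_2-r_{\mathrm{in}})(r_2-r_{\mathrm{out}}) + r_2(r_{\mathrm{in}}-r_1)(r_{\mathrm{out}}-r_1),$$
a sum of nonnegative factors. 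Equality forces either $r_1 = r_2$ (isoperimetric equality, which characterizes the $\mathcal{P}$-circle) or both $r_{\mathrm{in}}=r_1$ and $r_{\mathrm{out}}=r_2$, i.e., Bonnesen equality at $r_{\mathrm{in}}$; Lemma \ref{teo01} together with the symmetry of $\gamma$ then yields $\gamma$ as a $\mathcal{P}$-circle centered at the origin.

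For \textbf{(3)}, the bounded family of convex regions $H_j$ admits, by Blaschke selection, a Hausdorff-convergent subsequence with convex symmetric limit $H_\infty$. Since $A$, $L_{\mathcal{Q}}$, $r_{\mathrm{in}}$, and $r_{\mathrm{out}}$ are all continuous with respect to the Hausdorff metric on convex bodies, $E$ extends continuously and $E(H_\infty)=0$. The factorization in part (2) relied only on Bonnesen's inequality, which is valid for arbitrary convex bodies, so the same argument applied to $H_\infty$ forces it to be a $\mathcal{P}$-ball; since $A(H_\infty)=A(\mathcal{P})$ and $H_\infty$ is symmetric, $H_\infty = \mathcal{P}$. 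As every Hausdorff-subsequential limit equals $\mathcal{P}$, the whole sequence converges. The main technical obstacle is justifying the equality-case rigidity of Lemma \ref{teo01} at the possibly non-smooth limit body, which I would handle either by smooth approximation of $H_\infty$ from within $\mathcal{C}_s$ or by invoking the non-smooth form of Bonnesen's rigidity available in the convex-body literature.
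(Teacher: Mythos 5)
Your proof is correct, and part (2) takes a genuinely different route from the paper worth noting. For part (1) your argument is essentially identical to the paper's: integrate the pointwise inequality $(r_{\mathrm{out}}-f)(f-r_{\mathrm{in}})\geq 0$ and use $\int f\,ds = 2A$. For part (2) the paper expresses $g(r)=rL_{\mathcal{Q}}-A-A(\mathcal{P})r^2$ on $[r_{\mathrm{in}},r_{\mathrm{out}}]$ as a convex combination of $g(r_{\mathrm{in}})\geq 0$ and $g(r_{\mathrm{out}})\geq 0$, sets $r=f$, and integrates once more to land on $E(\gamma)\geq 0$; you instead read Bonnesen as the statement that $r_1\leq r_{\mathrm{in}}\leq r_{\mathrm{out}}\leq r_2$ where $r_1,r_2$ are the roots of $\varphi$, eliminate $A$, $L_{\mathcal{Q}}$, $A(\mathcal{P})$ via Vieta, and exhibit
\begin{equation*}
E(\gamma)\, r_1 r_2 (r_1+r_2)= r_1(r_2-r_{\mathrm{in}})(r_2-r_{\mathrm{out}})+r_2(r_{\mathrm{in}}-r_1)(r_{\mathrm{out}}-r_1)
\end{equation*}
(which I checked) as a manifestly nonnegative expression. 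This is a purely algebraic argument that avoids the second integration entirely and makes the equality analysis cleaner: either $r_1=r_2$, which is isoperimetric equality, or $r_{\mathrm{in}}=r_1$ and $r_{\mathrm{out}}=r_2$, i.e.\ Bonnesen equality, handled by Lemma \ref{teo01}. (One small omission: you should note that $\varphi$ has real roots because the discriminant $L_{\mathcal{Q}}^2 - 4A(\mathcal{P})A\geq 0$ by the Minkowskian isoperimetric inequality; and you only need \emph{one} of $r_{\mathrm{in}}=r_1$ or $r_{\mathrm{out}}=r_2$ to invoke Lemma \ref{teo01}, though as you observe both actually hold.) For part (3) your Blaschke selection argument matches the paper's; you explicitly flag the rigor point that the subsequential limit $H_\infty$ need not be smooth so Lemma \ref{teo01} in its stated form does not directly apply --- the paper silently glosses over this, so your caveat is a genuine observation rather than a defect, and the patch you suggest (a non-smooth Bonnesen rigidity statement) is the right fix.
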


\begin{proof}
If $r_{\mathrm{in}}\leq r\leq r_{\mathrm{out}}$ then%
\begin{equation*}
\left( r-r_{\mathrm{in}}\right) \left( r_{\mathrm{out}}-r\right) \geq
0\Rightarrow r\left( r_{\mathrm{in}}+r_{\mathrm{out}}\right) -r_{\mathrm{in}%
}r_{\mathrm{out}}\geq r^{2}
\end{equation*}%
For a curve $\gamma $ in $\mathcal{C}_{s}$ the support function $f$
satisfies $r_{\mathrm{in}}\leq f\leq r_{\mathrm{out}}$ for every value of
the parameter, so we can take $r=f$ above. Then we integrate the above
inequality to obtain 
\begin{equation*}
\left( r_{\mathrm{out}}+r_{\mathrm{in}}\right) \int_{0}^{L_{\mathcal{Q}}}f\
ds-r_{\mathrm{in}}r_{\mathrm{out}}L_{\mathcal{Q}}\geq \int_{0}^{L_{\mathcal{Q%
}}}f^{2}ds
\end{equation*}%
Since $\displaystyle\int_{0}^{L_{\mathcal{Q}}}f\ ds=2A$ we have the
inequality in \textbf{\textit{(1)}}. For \textbf{\textit{(2)}} let $g\left(
r\right) =rL_{\mathcal{Q}}-A-A(\mathcal{P})r^{2}.$ From Theorem \ref{lemma3}%
.we know that $g\left( r_{\mathrm{in}}\right) ,g\left( r_{\mathrm{out}%
}\right) \geq 0,$ so we may write for $r\in \left[ r_{\mathrm{in}},r_{%
\mathrm{out}}\right] $%
\begin{equation*}
\frac{r-r_{\mathrm{in}}}{r_{\mathrm{out}}-r_{\mathrm{in}}}g\left( r_{\mathrm{%
out}}\right) +\frac{r_{\mathrm{out}}-r}{r_{\mathrm{out}}-r_{\mathrm{in}}}%
g\left( r_{\mathrm{in}}\right) \geq 0
\end{equation*}%
which can be rewritten as%
\begin{equation*}
r\left( L_{\mathcal{Q}}-A\left( \mathcal{P}\right) \left( r_{\mathrm{out}%
}+r_{\mathrm{in}}\right) \right) -A+A\left( \mathcal{P}\right) r_{\mathrm{in}%
}r_{\mathrm{out}}\geq 0.
\end{equation*}%
Taking $r=f$ and integrating with respect to $s$%
\begin{equation*}
2A\left( L_{Q}-A\left( P\right) \left( r_{\mathrm{out}}+r_{\mathrm{in}%
}\right) \right) -AL_{\mathcal{Q}}+A\left( \mathcal{P}\right) r_{\mathrm{in}%
}r_{\mathrm{out}}L_{\mathcal{Q}}\geq 0
\end{equation*}%
which shows that $E\left( \gamma \right) \geq 0$. Since $f$ ranges from $r_{%
\mathrm{in}}$ to $r_{\mathrm{out}}$, if $E(\gamma )=0$ then we must have $%
g\left( r_{\mathrm{in}}\right) =g\left( r_{\mathrm{out}}\right) =0$ and
Lemma \ref{teo01} says that $\gamma $ is a $\mathcal{P}-$circle.

For \textit{\textbf{(3)}} let $\gamma _{j}$ be a sequence in $\mathcal{C}%
_{s} $ such that $\displaystyle\lim_{j\rightarrow \infty }E(\gamma _{j})=0$
and assume that all normalized curves $\eta _{j}$ lie at a same bounded
region of the plane. Notice that $E(\eta _{j})=E(\gamma _{j})$ for every $%
j\in \mathbb{N}$ and then $\displaystyle\lim_{j\rightarrow \infty }E(\eta
_{j})=0$. Denote by $H_{j}$ the region enclosed by $\eta _{j}$. By
Blaschke's Selection Theorem we have that there exists a subsequence $%
H_{j_{k}}$ which converges to a convex set $H$. Since $E$ is a continuous
functional (considering the Hausdorff topology in $\mathcal{C}_{s}$) we have 
$E(H)=\displaystyle\lim_{k\rightarrow \infty }E(H_{j_{k}})=0$, and then $H$
must be the unit $\mathcal{P}$-circle. It is also true that every convergent
subsequence of $H_{j}$ converges to the unit $\mathcal{P}$-circle. It
follows immediately that $H_{j}$ itself converges to the unit $\mathcal{P}$%
-circle. This concludes the proof.
\end{proof}

Finally we appropriately extend the functional $E$ to the desired functional 
$F$, as done in \cite{gage2}. Let $\gamma \in \mathcal{C}$, and consider all
chords which divide the area inside $\gamma $ in two equal parts. Pick one
(call it $S$) such that the tangent lines to $\gamma $ on the extreme points
are parallel. Let $\gamma _{1}$ (with $\mathcal{Q}$-length $L_{1}$) and $%
\gamma _{2}$ (with $\mathcal{Q}$-length $L_{2}$) be the two portions of $%
\gamma $ determined by $S$. Placing the $x$-axis along $S$ and the origin at
its midpoint we can build two curves $\gamma _{1}^{\ast }$ and $\gamma
_{2}^{\ast }$ which belongs to $\mathcal{C}_{s}$ by reflecting $\gamma _{1}$
and $\gamma _{2}$ through the origin. Since the functional $E$ is well
defined for these new curves we could define $F(\gamma )$ by 
\begin{equation*}
F(\gamma )=\frac{L_{1}}{L_{\mathcal{Q}}}E(\gamma _{1}^{\ast })+\frac{L_{2}}{%
L_{\mathcal{Q}}}E(\gamma _{2}^{\ast }),
\end{equation*}%
but, although this definition looks natural (because it coincides with $E$
in $\mathcal{C}_{s}$), it is not always correct. This happens because the
choice of the chord $S$ is not necessarily unique. To overcome this trouble
we define $F$ to be the supremum of the above expression between all
possible choices of $S$. It is not difficult to prove that the functional $F$
has also the properties \textbf{\textit{(1)}}, \textbf{\textit{(2)}} and 
\textbf{\textit{(3)}}, and we will omit the details.

Now we can finish the proof of Theorem \ref{teoGage}. By Schwarz inequality
we have 
\begin{equation*}
L_{\mathcal{Q}}=\int_{0}^{L_{\mathcal{Q}}}fk\ ds\leq \left( \int_{0}^{L_{%
\mathcal{Q}}}f^{2}\ ds\right) ^{1/2}\left( \int_{0}^{L_{\mathcal{Q}%
}}k^{2}ds\right) ^{1/2}
\end{equation*}%
Squaring both sides and using inequality expressed in Theorem \ref{teo2}
yields 
\begin{equation*}
L_{\mathcal{Q}}^{2}\leq \left( \int_{0}^{L_{\mathcal{Q}}}f^{2}\ ds\right)
\left( \int_{0}^{L_{\mathcal{Q}}}k^{2}ds\right) \leq \frac{L_{\mathcal{Q}}A}{%
A(\mathcal{P})}\left( 1-F(\gamma )\right) \int_{0}^{L_{\mathcal{Q}}}k^{2}ds
\end{equation*}%
and the desired result comes immediately.

\subsection{Proof of Lemma \protect\ref{teo01}}

Assume that $\gamma \in {\mathcal{C}}$ is symmetric. We start with the
following quite intuitive result:

\begin{lemma}
\label{lemma:rinmu0} Denote by $\mu _{0}$ the minimum curvature radius of $%
\gamma $. Then $\mu _{0}\leq r_{\mathrm{in}}$ with equality only in case $%
\gamma $ is a $\mathcal{P}$-circle.
\end{lemma}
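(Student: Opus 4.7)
The plan is to exhibit a ``$\mu_0$-inner parallel'' of $\gamma$ and to show that it is non-empty; this gives an inscribed $\mathcal{P}$-disk of radius $\mu_0$, and its shape controls the equality case. First, I set $\tilde\gamma(\theta) := \gamma(\theta) - \mu_0 p(\theta)$. Using $\gamma' = \lambda q$, the relation $p' = [p,p']\,q$ that follows from (\ref{eqn:2}), and $k = [p,p']/\lambda$, I get
\[
\tilde\gamma'(\theta) = \lambda(\theta)\bigl(1 - \mu_0 k(\theta)\bigr)\,q(\theta).
\]
Since $\mu_0 = 1/k_{\max}$ yields $1 - \mu_0 k \geq 0$ everywhere, $\tilde\gamma'$ is a non-negative multiple of $q(\theta) = e_\theta/a(\theta)$; as the direction of $q$ rotates monotonically through $2\pi$, the curve $\tilde\gamma$ is a weakly convex closed curve bounding a (possibly degenerate) convex region $\tilde\Omega$.

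Next I would identify $\tilde\Omega$ with the Minkowski inner parallel $\Omega \ominus \mu_0\mathcal{P}$, where $\Omega$ is the region enclosed by $\gamma$. Comparing Euclidean support functions (both $\gamma$ and $\tilde\gamma$ have outward Euclidean normal $e_r(\theta)$ at parameter $\theta$), one obtains $h_{\tilde\Omega}(\theta) = h_\Omega(\theta) - \mu_0 a(\theta)$, which gives the equivalence $x \in \tilde\Omega \iff x + \mu_0 \mathcal{P} \subseteq \Omega$. Since $\tilde\Omega$ contains the trace of $\tilde\gamma$ and hence is non-empty, any such $x$ produces an inscribed $\mathcal{P}$-disk of radius $\mu_0$, proving $r_{\mathrm{in}} \geq \mu_0$.

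For the equality case, if $r_{\mathrm{in}} = \mu_0$ then $\tilde\Omega$ must have empty interior: otherwise a $\mathcal{P}$-disk of some positive radius $\epsilon$ fits inside $\tilde\Omega$, and the identity $\epsilon\mathcal{P} + \mu_0\mathcal{P} = (\mu_0 + \epsilon)\mathcal{P}$ upgrades it to an inscribed disk of radius $\mu_0 + \epsilon > \mu_0$ in $\Omega$, contradicting $r_{\mathrm{in}} = \mu_0$. So $\tilde\Omega$ is either a point or a line segment. In the segment case, translating coordinates so the segment lies on the $x$-axis makes the $y$-component of $\tilde\gamma$ vanish identically; from the formula for $\tilde\gamma'$ this forces $\lambda(\theta)(1 - \mu_0 k(\theta))\cos\theta \equiv 0$, and since $\lambda > 0$, continuity then gives $k \equiv 1/\mu_0$, so $\tilde\gamma' \equiv 0$ and $\tilde\Omega$ would collapse to a point, a contradiction. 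Hence $\tilde\Omega = \{C\}$, and $\gamma(\theta) = C + \mu_0 p(\theta)$ is a $\mathcal{P}$-circle.

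The main obstacle is the rigidity step in the equality case: the offset construction does not a priori prevent $\tilde\gamma$ from tracing a non-degenerate segment back and forth (a ``stadium'' scenario for $\gamma$), and one must verify that the $y$-component computation together with strict convexity really forces that degeneracy down to a single point. Once this is excluded, both the inequality and the characterization of equality follow cleanly from the inner-parallel identification.
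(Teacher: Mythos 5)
Your argument is correct, but it takes a genuinely different route from the paper's. The paper works pointwise with the osculating $\mathcal{P}$-circle at a minimum of the curvature radius: picking coordinates so that $\mu(0)=\mu_0$, it compares the $y$-coordinate of $\gamma(\theta)$ with that of the osculating circle $P_0(\theta)$, shows $y\geq y_0$ with equality only on the maximal interval $[-a,a]$ where $\mu=\mu_0$, and then (if $a<\pi/2$) shifts $P_0$ by a small vertical vector to get a $\mathcal{P}$-disk of radius $\mu_0$ strictly inside $\gamma$; the case $a=\pi/2$ is handled by invoking the standing assumption of this subsection that $\gamma$ is \emph{symmetric}. You instead construct the Minkowski inner parallel $\tilde\gamma=\gamma-\mu_0 p$, identify the region it bounds with the erosion $\Omega\ominus\mu_0\mathcal{P}$, and read off $r_{\mathrm{in}}\geq\mu_0$ from nonemptiness, then treat the equality case by showing $\tilde\Omega$ must degenerate to a point. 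Your version has two advantages: it does not appear to use the symmetry of $\gamma$ at all, so it proves the lemma on all of $\mathcal{C}$ and not just $\mathcal{C}_s$; and each step is a global, support-function computation rather than a pointwise sign comparison. Two small things you should make explicit if you write this up. First, the identification $\tilde\Omega=\Omega\ominus\mu_0\mathcal{P}$ (equivalently $\tilde\Omega+\mu_0\mathcal{P}=\Omega$) requires verifying that $h_\Omega-\mu_0 a$ is an admissible Euclidean support function, i.e.\ $(h_\Omega-\mu_0 a)+(h_\Omega-\mu_0 a)''\geq0$; this reduces, via $\rho_E=\mu(a+a'')$, to exactly $\mu\geq\mu_0$, so it is precisely the optimal choice of $\mu_0$ that makes the erosion behave. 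Second, "translating coordinates so the segment lies on the $x$-axis" should be "rotating"; the rest of that computation goes through under rotation, and in fact the segment case can be ruled out even more directly by noting that the tangent direction of $\tilde\gamma$, being a nonnegative multiple of $q(\theta)$, rotates through a full turn, while a doubly-traversed segment has only two possible tangent directions, forcing $\tilde\gamma'\equiv0$.
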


\begin{proof}
Assume that $\mu (0)=\mu _{0}$, where $\mu _{0}=\min \{\mu (\theta )|0\leq
\theta \leq 2\pi \}$. We may also assume that $(-a,a)$, $0\leq a<\frac{\pi }{%
2}$, is the maximal interval where $\mu (\theta )=\mu _{0}$. Observe that if 
$a=\frac{\pi }{2}$ then, by the symmetry of $\gamma $, $\gamma $ would
necessarily be a $\mathcal{P}$-circle.

For any $-\pi \leq \theta \leq \pi $, denote $P_{0}=(x_{0}(\theta
),y_{0}(\theta ))$ the $\mathcal{P}$-circle of radius $\mu _{0}$ osculating
at $\gamma (0)$. In the euclidean case, $x_{0}=\mu _{0}\sin (\theta )$, $%
y_{0}=\mu _{0}-\mu _{0}\cos (\theta )$. Denote also $\gamma (\theta
)=(x(\theta ),y(\theta ))$, $q(\theta )=(q_{1}(\theta ),q_{2}(\theta ))$ and 
$p(\theta )=(p_{1}(\theta ),p_{2}(\theta ))$. Since 
\begin{equation*}
\gamma ^{\prime }(\theta )=\mu (\theta )[p,p^{\prime }](\theta )q(\theta ),
\end{equation*}%
we can write 
\begin{equation*}
y(\theta )=\int_{0}^{\theta }\mu \lbrack p,p^{\prime }]q_{2}d\theta \geq \mu
_{0}\left( p_{2}(\theta )-p_{2}(0)\right) =y_{0}(\theta ),
\end{equation*}%
with equality if and only if $-a\leq \theta \leq a$. We conclude that $%
y(\theta )\geq y_{0}(\theta )$, and the equality holds if and only if $%
-a\leq \theta \leq a$. Thus the osculating $\mathcal{P}$-circle $P_{0}$ is
tangent to $\gamma $ only at $\gamma (\theta )$, $-a\leq \theta \leq a$. So
there exists $\epsilon >0$ such that $P_{0}+(0,\epsilon )$ is contained in
the interior of $\gamma $, thus proving the proposition.
\end{proof}

For $0\leq r\leq r_{\mathrm{in}}$, denote $D_{r}$ the set of points inside $%
\gamma $ whose distance to $\gamma $ is $\geq r$ and let $C_{r}=\partial
D_{r}$. Denote by $L_{\mathcal{Q}}(r)$ the $\mathcal{Q}$-length of $C_{r}$.
The following proposition is easy to prove:

\begin{prop}
\label{prop:ALr} If $r\leq \mu _{0}$, then 
\begin{equation}
L_{\mathcal{Q}}(r)=L_{\mathcal{Q}}-2A(\mathcal{P})r.  \label{eq:Lr}
\end{equation}%
Moreover,%
\begin{equation}
A=\int_{0}^{r_{\mathrm{in}}}L_{\mathcal{Q}}(r)dr.  \label{eq:area}
\end{equation}
\end{prop}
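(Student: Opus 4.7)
My proof plan proceeds in two phases corresponding to the two claims of the proposition.

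For the length identity (\ref{eq:Lr}), I parameterize the inner parallel curve explicitly as $C_r(\theta) = \gamma(\theta) - r\, p(\theta)$, i.e.\ the natural offset from $\gamma$ by Minkowski distance $r$ in the inward-normal direction $-p(\theta)$. Using the identities $\gamma'(\theta) = \lambda(\theta) q(\theta)$ and $p'(\theta) = (a+a'')(\theta)\, e_\theta$ from (\ref{eqn:Mink}), a short algebraic manipulation yields
\[
C_r'(\theta) \;=\; \gamma'(\theta) - r\, p'(\theta) \;=\; \bigl(\lambda(\theta) - r[p,p'](\theta)\bigr)\, q(\theta) \;=:\; \lambda_r(\theta)\, q(\theta).
\]
Since $\mu(\theta) = \lambda(\theta)/[p,p'](\theta)$ by (\ref{eqn:5}), the hypothesis $r \le \mu_0 = \min \mu$ forces $\lambda_r(\theta) > 0$ uniformly in $\theta$, so $C_r$ is a smooth strictly convex curve parameterized ``angularly'' in the same sense as $\gamma$. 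Hence
\[
L_\mathcal{Q}(r) \;=\; \int_0^{2\pi} \lambda_r(\theta)\, d\theta \;=\; L_\mathcal{Q} - r\int_0^{2\pi}[p,p']\, d\theta \;=\; L_\mathcal{Q} - 2\, A(\mathcal{P})\, r,
\]
by Proposition \ref{prop1}(a).

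For the area identity (\ref{eq:area}), the plan is to establish $-A'(r) = L_\mathcal{Q}(r)$ (at least almost everywhere) on $[0, r_{\mathrm{in}}]$ and then integrate, using $A(r_{\mathrm{in}}) = 0$ (the limit set $D_{r_{\mathrm{in}}}$ has empty interior for strictly convex $\gamma$ and $\mathcal{P}$). For $r \le \mu_0$ the identity is elementary: the Minkowski support function of $D_r$ equals $f - r$ (because $[\gamma - r p, q] = f - r[p,q] = f - r$, using $[p,q]=1$), and applying Proposition \ref{prop1}(b) to the smooth strictly convex curve $C_r$ yields $A(r) = A - L_\mathcal{Q} r + A(\mathcal{P}) r^2$, whose derivative is precisely $-L_\mathcal{Q}(r)$. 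For $r \in (\mu_0, r_{\mathrm{in}}]$ I invoke the co-area formula for the Lipschitz Minkowski distance function $d(x) = d_\mathcal{P}(x,\gamma)$: at a smooth point $x \in C_r$ with nearest boundary point $\gamma(\theta(x))$, a duality computation (again using $[p,q]=1$) gives $|\nabla d|_{\mathrm{Eucl}} = 1/a(\theta(x))$, so
\[
A \;=\; \int_{D_0}\! dA \;=\; \int_0^{r_{\mathrm{in}}}\!\!\int_{C_r}\! \frac{d\mathcal{H}^1_{\mathrm{Eucl}}}{|\nabla d|}\, dr \;=\; \int_0^{r_{\mathrm{in}}}\!\!\int_{C_r}\! a(\theta(x))\, d\mathcal{H}^1_{\mathrm{Eucl}}(x)\, dr \;=\; \int_0^{r_{\mathrm{in}}}\! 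L_\mathcal{Q}(r)\, dr,
\]
using that $\|e_\theta\|_\mathcal{Q} = a(\theta)$ is the Euclidean-to-$\mathcal{Q}$ conversion along the tangent direction $e_\theta$.

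The principal obstacle lies in the range $r \in (\mu_0, r_{\mathrm{in}}]$: there the naive parameterization $\theta \mapsto \gamma(\theta) - r p(\theta)$ can lose injectivity and $C_r$ may develop corners along the medial axis of $D_0$. These singular points form a measure-zero subset on each $C_r$ and correspond to a measure-zero subset of $r$-values (for smooth $\gamma$ and $\mathcal{P}$), so they affect neither the $\mathcal{Q}$-length integrals nor the absolute continuity of $A(r)$; making this rigorous is exactly what forces the co-area argument above rather than a purely parametric one.
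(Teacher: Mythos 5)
Your Phase 1 is essentially identical to the paper's argument: parameterize $C_r$ by $\gamma(\theta)-rp(\theta)$, observe $C_r'(\theta)=\lambda_r(\theta)q(\theta)$ with $\lambda_r=\lambda-r[p,p']=[p,p'](\mu-r)$, and integrate. (One small nit: for $r=\mu_0$ you only get $\lambda_r\geq 0$, not $>0$, at the minimizing $\theta$, but the length formula survives by continuity.)

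Phase 2 is where you genuinely diverge. The paper treats $\beta(\theta,r)=\gamma(\theta)-rp(\theta)$ as a change of variables on $\{(\theta,r): 0\leq r\leq r_{\mathrm{in}},\ \theta\in I(r)\}$, computes the Jacobian determinant $[\beta_\theta,\beta_r]=[p,p'](\mu(\theta)-r)$, checks it is positive on the domain, and integrates this both ways to identify $A$ with $\int_0^{r_{\mathrm{in}}}L_{\mathcal Q}(r)\,dr$. You instead invoke the co-area formula for the Lipschitz Minkowski distance function $d$, computing $|\nabla d|_{\mathrm{Eucl}}=1/a(\theta)$ (which checks out: $\nabla d\perp q$ and $\langle\nabla d,p\rangle=-1$ force $\nabla d=-e_r/a$) and using $ds_{\mathcal Q}=a\,ds_{\mathrm{Eucl}}$ along the tangent direction. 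The two arguments are morally equivalent — the paper's Jacobian computation is exactly the pointwise content of the co-area identity — but your version makes explicit the measure-theoretic hygiene near the medial axis, where the map $\beta$ loses injectivity and $C_r$ develops corners; the paper leaves that implicit in the choice of domains $I(r)$. The price is heavier machinery (co-area for Lipschitz functions). Your side remark — deriving $A(r)=A-L_{\mathcal Q}r+A(\mathcal P)r^2$ on the smooth range $r\leq\mu_0$ from the support function $f-r$ and Proposition \ref{prop1} — is a clean self-contained computation the paper does not record and would make a nice addition.
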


\begin{proof}
Let%
\begin{equation*}
\beta (\theta ,r)=\gamma (\theta )-rp(\theta )
\end{equation*}%
be a parameterization of $C_{r}$, $\theta \in I(r)$. If $r<\mu _{0}$, then $%
I(r)=[0.2\pi ]$ and so 
\begin{equation*}
L_{\mathcal{Q}}(r)=\int_{0}^{2\pi }[p,p^{\prime }](\mu (\theta )-r)d\theta
=L_{\mathcal{Q}}(0)-2A(P)r,
\end{equation*}%
which proves equation (\ref{eq:Lr}). To prove equation (\ref{eq:area}),
observe that the region $D$ enclosed by $\gamma $ is the disjoint union of $%
C_{r}$, $0\leq r\leq r_{\mathrm{in}}$. Since 
\begin{equation*}
\left[ \frac{\partial \beta }{\partial \theta },\frac{\partial \beta }{%
\partial r}\right] =[p,p^{\prime }]\left( \mu (\theta )-r\right) .
\end{equation*}%
and $\mu (\theta )-r>0$, for $\theta \in I(r)$, we conclude that 
\begin{equation*}
A=\int_{r=0}^{r_{in}}\int_{I(r)}[p,p^{\prime }]\left( \mu (\theta )-r\right)
d\theta dr=\int_{r=0}^{r_{in}}L_{\mathcal{Q}}(r)dr.
\end{equation*}
\end{proof}

Now consider an arc of the unit $\mathcal{P}$-circle defined by $\theta
_{1}\leq \theta \leq \theta _{2}$. Taking the tangents to $\mathcal{P}$ at $%
\theta =\theta _{1}$ and $\theta =\theta _{2}$ we obtain a polygonal line
formed by a pair of segments (see Figure \ref{F1}). It is not difficult to
verify that the $Q$-lengths of the segments are 
\begin{equation}
L_{1}=\frac{1-[p(\theta _{1}),q(\theta _{2})]}{[q(\theta _{1}),q(\theta
_{2})]};\ \ L_{2}=\frac{1-[p(\theta _{2}),q(\theta _{1})]}{[q(\theta
_{1}),q(\theta _{2})]}.
\end{equation}%
The $Q$-length of the arc is given by 
\begin{equation}
L_{arc}=\int_{\theta _{1}}^{\theta _{2}}[p,p^{\prime }]d\theta ,
\end{equation}%
and we define 
\begin{equation}
\delta (\theta _{1},\theta _{2})=L_{1}+L_{2}-L_{arc}.
\end{equation}%
Then $\delta =\delta (\theta _{1},\theta _{2})$ is strictly positive (see 
\cite{thompson}).

\begin{figure} 
\centering
\includegraphics{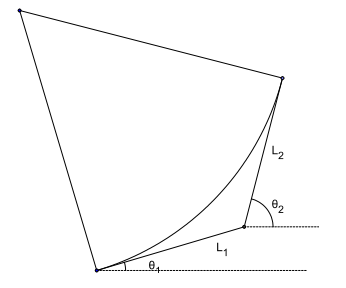}
\caption{Tangent segments to $\mathcal{P}$ at $\theta _{1}$ and $\theta _{2}$.}{\label{F1}}
\end{figure}

\begin{figure} {\label{F2}}
\centering
\includegraphics{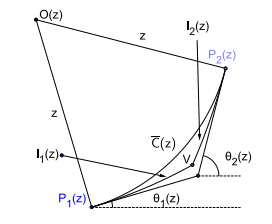}
\caption{Arcs of $C$ between $\theta _{1}$ and $\theta _{2}$ and $V$.}{\label{F2}}
\end{figure}

For $r>\mu _{0}$, the curves $C_{r}$ necessarily admit corners. Thus we must
consider curves which are smooth by parts with a finite number of vertices.

\begin{lemma}
\label{lemma:corner} Assume that $C$ is smooth by parts and at some corner $%
V $ the parameter of the tangent lines are $\theta _{1}$ and $\theta _{2}$.
Consider an arc of $\mathcal{P}$-circle of radius $z$ inscribed in this
corner. Denote by $l_{1}(z)$ and $l_{2}(z)$ the $Q$-lengths of the arcs of $%
C $ between the vertex $V$ and the tangency points $P_{1}\left( z\right) $
and $P_{2}\left( z\right) $(see Figure \ref{F2}). Then 
\begin{equation*}
l_{1}(z)+l_{2}(z)=zL_{arc}+z\delta \left( \theta _{1},\theta _{2}\right)
+O(z^{2}),
\end{equation*}%
where $\lim_{z\rightarrow 0}\frac{O(z^{2})}{z}=0.$
\end{lemma}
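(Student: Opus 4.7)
The plan is to linearize near the corner $V$ and exploit the tangency conditions for the inscribed $\mathcal{P}$-circle. First, I parameterize each of the two smooth branches of $C$ emanating from $V$ by its own $\mathcal{Q}$-arclength $s$. Since $d\gamma/ds=q(\theta)$ and the tangent parameter at $V$ on branch $i$ is $\theta_i$, smoothness of each branch gives the Taylor expansion
$$P_i(s) = V + \epsilon_i\, s\, q(\theta_i) + O(s^2),$$
where each $\epsilon_i\in\{\pm 1\}$ is fixed by the side on which the region $D_r$ lies relative to $V$. Second, I locate the tangency points. The $\mathcal{P}$-circle of radius $z$ with center $C_z$ is $\theta\mapsto C_z+zp(\theta)$, and by (\ref{eqn:Mink}) its tangent direction at parameter $\theta$ is parallel to $q(\theta)$. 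Matching tangent directions to $C$ at the two contact points therefore forces contact to occur at parameters $\theta_1,\theta_2$ on the circle, so
$$P_i(l_i(z)) = C_z + z\, p(\theta_i), \qquad i=1,2.$$

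Third, I subtract these two equalities for $i=1,2$ and substitute the Taylor expansion of $P_i-V$. Provided (as I will justify) that $l_i=O(z)$, this yields the vector identity
$$\epsilon_1 l_1(z)\, q(\theta_1) - \epsilon_2 l_2(z)\, q(\theta_2) = z\bigl(p(\theta_1) - p(\theta_2)\bigr) + O(z^2).$$
Taking the bracket $[\,\cdot\,,q(\theta_2)]$ of both sides and using $[p(\theta_j),q(\theta_j)]=1$, the $l_2$ term vanishes and I isolate
$$\epsilon_1 l_1(z)\,[q(\theta_1),q(\theta_2)] = z\bigl([p(\theta_1),q(\theta_2)] - 1\bigr) + O(z^2);$$
comparing with the explicit formula for $L_1$ and fixing $\epsilon_1$ by the positivity of $l_1$ gives $l_1(z) = zL_1 + O(z^2)$. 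The analogous bracket with $q(\theta_1)$ yields $l_2(z) = zL_2 + O(z^2)$. Summing and recalling the definition $\delta(\theta_1,\theta_2) = L_1 + L_2 - L_{arc}$ delivers the claim.

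The main obstacle is the bookkeeping that legitimizes the step $l_i = O(z)$ and the honest consolidation of remainders into a single $O(z^2)$. I expect to handle this by an implicit-function-theorem argument applied to the three tangency equations (the two contact conditions together with the matching of the tangent directions), viewed as a smooth system in $(C_z, l_1, l_2)$ with parameter $z$, having the solution $(V, 0, 0)$ at $z=0$. The nondegeneracy of the Jacobian at $z=0$ reduces precisely to $[q(\theta_1),q(\theta_2)]\neq 0$, which follows from $\theta_1\neq\theta_2$ and the strict convexity of $\mathcal{Q}$. Beyond this, the proof is a purely algebraic extraction of the leading term through the bracket identities and recognition of the coefficients as $L_1$ and $L_2$.
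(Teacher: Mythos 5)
Your argument is correct and in spirit the same as the paper's: both write the tangency conditions for the inscribed $\mathcal{P}$-circle and linearize at $z=0$, using the identities $p'(\theta)\parallel q(\theta)$ and $[p,q]=1$. The execution of the final step differs, and your version is slightly more explicit. The paper differentiates the single vector identity $g(-l_1(z))-zp(\theta_1(z))=g(l_2(z))-zp(\theta_2(z))$ at $z=0$, then observes that the resulting linear relation for $\frac{dl_1}{dz}(0),\frac{dl_2}{dz}(0)$ depends only on $\theta_1,\theta_2$ and so "must" give the same answer as the scaled polygonal, i.e.\ $L_1,L_2$. You instead solve the linearized system outright by pairing against $q(\theta_1)$ and $q(\theta_2)$ and recognizing the ratios as $L_1,L_2$; this removes the slight leap in the "reduce to the polygonal" step, and makes visible that the relevant nondegeneracy is exactly $[q(\theta_1),q(\theta_2)]\neq 0$, which also underwrites the implicit-function argument that both proofs are tacitly leaning on for differentiability of $l_i(z)$. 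One small imprecision to fix in your write-up: the tangency does not force contact on the circle to occur at exactly $\theta_1,\theta_2$, but at parameters $\theta_i(z)\to\theta_i$; since $\theta_i(z)-\theta_i=O(l_i(z))=O(z)$, replacing $z\,p(\theta_i(z))$ by $z\,p(\theta_i)$ costs only $O(z^2)$, but this should be said, and your IFT system should either carry $\theta_i(z)$ as unknowns or eliminate them via the tangent-matching relation $\theta_i(z)=\Theta_i(l_i(z))$ before applying the theorem.
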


\begin{proof}
We parameterize $C$ around $V$ by an $\mathcal{Q}$-arclength parameter $s$
using a function $g:\left( -\varepsilon ,\varepsilon \right) \rightarrow 
\mathbb{R}^{2}$ with $g\left( 0\right) =V$ (so $g$ is smooth everywhere
except at $s=0$, where we have lateral derivatives%
\begin{equation*}
\frac{dg}{ds}\left( 0_{-}\right) =q\left( \theta _{1}\right) \ \mathrm{and} \ %
\frac{dg}{ds}\left( 0_{+}\right) =q\left( \theta _{2}\right)
\end{equation*}%
The definition of the tangent points means that $P_{1}\left( z\right)
=g\left( -l_{1}\left( z\right) \right) $ and $P_{2}\left( z\right) =g\left(
l_{2}\left( z\right) \right) $. Writing the position of the center $O\left(
z\right) $ in two ways, we write $l_{1}$ and $l_{2}$ implicitly as a
function of $z$:%
\begin{equation*}
g\left( -l_{1}\left( z\right) \right) -zp\left( \theta _{1}\left( z\right)
\right) =g\left( l_{2}\left( z\right) \right) -zp\left( \theta _{2}\left(
z\right) \right)
\end{equation*}%
where $\theta _{1}\left( z\right) $ and $\theta _{2}\left( z\right) $ are
the $\theta $-parameters associated to $P_{1}$ and $P_{2}$. Take derivatives
with respect to $z$ and then take $z\rightarrow 0$ (so $\theta _{1}\left(
z\right) \rightarrow \theta _{1}$ and $\theta _{2}\left( z\right)
\rightarrow \theta _{2}$) to arrive at%
\begin{equation*}
-q\left( \theta _{1}\right) \frac{dl_{1}}{dz}\left( 0\right) -p\left( \theta
_{1}\right) =q\left( \theta _{2}\right) \frac{dl_{2}}{dz}\left( 0\right)
-p\left( \theta _{2}\right)
\end{equation*}%
Now, this equation depends only on the angles $\theta _{1}$ and $\theta _{2}$
-- the exact shape of $C$ does not matter at all! So, up to first order, the
lengths $l_{1}$ and $l_{2}$ depend on $z$ the same way they would if the
curve were already the polygonal in \ref{F1} (scaled by a factor of $z$),
that is%
\begin{eqnarray*}
l_{1}\left( z\right) +l_{2}\left( z\right) &=&z\left( L_{1}+L_{2}\right)
+O\left( z^{2}\right) = \\
&=&zL_{arc}+z\delta \left( \theta _{1},\theta _{2}\right) +O\left(
z^{2}\right) .
\end{eqnarray*}
\end{proof}

\begin{prop}
\label{prop:corner} Consider a convex curve $C$ with at least one corner.
Then 
\begin{equation*}
\frac{dL_{\mathcal{Q}}}{ds}(0)<-2A(\mathcal{P}).
\end{equation*}
\end{prop}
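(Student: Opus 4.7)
The plan is to compute the $\mathcal{Q}$-length $L_{\mathcal{Q}}(z)$ of the inner $\mathcal{P}$-parallel $C_z$ of $C$ at distance $z\geq 0$, expand it to first order in $z$, and show that each corner of $C$ produces a strictly negative extra contribution on top of the smooth rate $-2A(\mathcal{P})$.

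First I would describe $C_z$ near a corner $V$ of $C$ with tangent directions $\theta_1<\theta_2$. Along each smooth arc adjacent to $V$ the inner parallel is the pointwise offset $\gamma(\theta)\mapsto\gamma(\theta)-zp(\theta)$, but this offset only lies on $\partial D_z$ while the $\mathcal{P}$-ball of radius $z$ centered at $\gamma(\theta)-zp(\theta)$ stays inside $\bar C$. As $\theta$ approaches $\theta_1$ from below on the left arc, this ball first meets the right smooth arc exactly when $\gamma(\theta)$ reaches the tangent point $P_1(z)$ of Lemma \ref{lemma:corner}; call its tangent parameter $\theta_1^V(z)<\theta_1$, and symmetrically $\theta_2^V(z)>\theta_2$. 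The two truncated offsets meet at the inscribed circle's center $O(z)$, which becomes a new corner of $C_z$ spanning the enlarged $\theta$-range $[\theta_1^V(z),\theta_2^V(z)]$.

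Granted this picture, I would decompose
\begin{equation*}
L_{\mathcal{Q}}(C_z)=\sum_{\text{smooth arcs of }C_z}\int [p,p'](\mu(\theta)-z)\,d\theta
\end{equation*}
and compare arc by arc with $L_{\mathcal{Q}}(C)$. The difference contains a uniform shrinkage $-z\int_{R_z}[p,p']\,d\theta$, where $R_z$ is the union of smooth $\theta$-ranges of $C_z$, plus truncation terms $-(l_1^V(z)+l_2^V(z))$ at each corner representing the short pieces of $C$ between $V$ and $P_1,P_2$ that are now absorbed into the new corner $O(z)$. By Proposition \ref{prop1}(a), $\int_{R_z}[p,p']\,d\theta\to 2A(\mathcal{P})-\sum_V L_{\mathrm{arc}}^V$ as $z\to 0$, while Lemma \ref{lemma:corner} supplies $l_1^V(z)+l_2^V(z)=z(L_{\mathrm{arc}}^V+\delta^V)+O(z^2)$. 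The $L_{\mathrm{arc}}^V$ terms telescope, giving
\begin{equation*}
\frac{dL_{\mathcal{Q}}}{dz}(0)=-2A(\mathcal{P})-\sum_V \delta^V<-2A(\mathcal{P}),
\end{equation*}
since every $\delta^V$ is strictly positive and $C$ has at least one corner.

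The main obstacle is the local geometric analysis near each corner: one must carefully verify that the offset of each smooth arc leaves $\partial D_z$ exactly where its defining $\mathcal{P}$-ball first becomes tangent to the neighbouring arc, and that the two truncated offsets really do meet at the inscribed-circle center $O(z)$ of Lemma \ref{lemma:corner} as a genuine corner of $C_z$. The key geometric fact needed is that a $\mathcal{P}$-ball tangent to one smooth arc of $C$ starts to cross the adjacent arc precisely when it first becomes tangent to it --- an argument that depends on the strict convexity of the smooth parts of $C$ (the same ``two-ball tangency'' phenomenon that makes the inner parallel of a polygon again a polygon rather than a rounded figure).
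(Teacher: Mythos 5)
Your proposal is correct and takes essentially the same approach as the paper: you account for the first-order $\mathcal{Q}$-length loss of $C_z$ by splitting it into the smooth offset term ($-z\int[p,p']\,d\theta$) and the corner truncation terms estimated by Lemma \ref{lemma:corner}, whence $\frac{dL_{\mathcal{Q}}}{dz}(0)=-2A(\mathcal{P})-\sum_V\delta^V$. The paper performs the same bookkeeping by routing through the auxiliary rounded curve $\overline{C}_z$ and Proposition \ref{prop:ALr}, while you do the decomposition of $L_{\mathcal{Q}}(C_z)$ directly; the two are computationally identical.
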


\begin{proof}
Denote $\theta _{1}<\theta _{2}$ the angles of a corner $k$ and consider an
arc of the circle $\mathcal{P}$ defined by the angles $\theta _{1}$ and $%
\theta _{2}$. Consider $z$ small and inscribe a circle $z\mathcal{P}$ at a
corner $k$. Denote $\overline{C}_{z}$ the curve obtained from $C$ by
substituting each corner by the corresponding arc of the circle $z\mathcal{P}
$. By Lemma \ref{lemma:corner}, the length difference at the corner $k$ is $%
l_{k}(C)-L_{k}(\overline{C}_{z})=z\delta _{k}+O(z^{2})$. Since 
\begin{equation*}
L_{\mathcal{Q}}(\overline{C}_{z})=L_{\mathcal{Q}}(z)+2A(\mathcal{P})z,
\end{equation*}%
we conclude that 
\begin{equation*}
L_{\mathcal{Q}}(0)=L_{\mathcal{Q}}(z)+\left( 2A(\mathcal{P})+\sum_{k}\delta
_{k}\right) z+O(z^{2}),
\end{equation*}%
thus proving the proposition.
\end{proof}

\begin{coro}
\label{cor:Lr} If $r>\mu _{0}$ then $L_{\mathcal{Q}}(r)<L_{\mathcal{Q}}-2A(%
\mathcal{P})r$.
\end{coro}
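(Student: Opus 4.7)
The plan is to combine the equality case from Proposition \ref{prop:ALr} at $r=\mu_0$ with the strict corner estimate from Proposition \ref{prop:corner}, and then integrate a one-sided derivative inequality.

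First, I would extend equation (\ref{eq:Lr}) from $r<\mu_0$ to $r=\mu_0$ by continuity of $r\mapsto L_{\mathcal{Q}}(r)$, obtaining $L_{\mathcal{Q}}(\mu_0)=L_{\mathcal{Q}}-2A(\mathcal{P})\mu_0$. This provides the baseline from which I want to beat the linear rate $-2A(\mathcal{P})$.

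Next, for any $r_0\in(\mu_0,r_{\mathrm{in}}]$, I would argue that the offset curve $C_{r_0}=\partial D_{r_0}$ carries at least one corner: the parameterization $\beta(\theta,r_0)=\gamma(\theta)-r_0\,p(\theta)$ of Proposition \ref{prop:ALr} becomes singular on the set $\{\theta:\mu(\theta)<r_0\}$, which is nonempty since $\mu_0<r_0$, and the smooth branches of $C_{r_0}$ flanking such a region meet at a genuine angle after one passes to the boundary of $D_{r_0}$. Proposition \ref{prop:corner} then applies to $C=C_{r_0}$ and, identifying its offset parameter $z$ with $r_0+z$, yields
$$\limsup_{z\to 0^+}\frac{L_{\mathcal{Q}}(r_0+z)-L_{\mathcal{Q}}(r_0)}{z}\leq -2A(\mathcal{P})-\sum_k\delta_k<-2A(\mathcal{P}),$$
so the upper-right Dini derivative of $L_{\mathcal{Q}}$ is strictly below $-2A(\mathcal{P})$ everywhere on $(\mu_0,r_{\mathrm{in}}]$, while on $[0,\mu_0]$ it equals exactly $-2A(\mathcal{P})$ by the first step.

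Finally, to turn these pointwise bounds into the desired strict integrated inequality, I would pick any intermediate $\hat r\in(\mu_0,r)$; the strict pointwise bound at $\hat r$ produces $\varepsilon>0$ with $\hat r+\varepsilon<r$ and $L_{\mathcal{Q}}(\hat r+\varepsilon)-L_{\mathcal{Q}}(\hat r)<-2A(\mathcal{P})\varepsilon$. On the two complementary subintervals $[0,\hat r]$ and $[\hat r+\varepsilon,r]$ the right Dini derivative of $L_{\mathcal{Q}}$ remains $\leq -2A(\mathcal{P})$, so a Denjoy--Bourbaki comparison theorem bounds the decrement on each by $-2A(\mathcal{P})$ times its length; adding the three contributions gives $L_{\mathcal{Q}}(r)-L_{\mathcal{Q}}<-2A(\mathcal{P})r$, as claimed.

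The main obstacle I anticipate is this last, integration step: because $L_{\mathcal{Q}}(r)$ is only known a priori to be continuous and monotone in $r$, I cannot integrate a derivative identity directly and must instead invoke a Denjoy--Bourbaki-type theorem for Dini derivatives of continuous functions, carefully tracking the strictness on the middle subinterval. A subsidiary technical point is making rigorous the claim that $C_{r_0}$ has honest corners (and not merely higher-order singularities) for every $r_0>\mu_0$, which reduces to a local analysis of the envelope of the inner-offset family $\{\beta(\theta,r)\}$ as $r$ crosses $\mu_0$.
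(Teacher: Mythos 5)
Your proof takes essentially the same route as the paper's: both apply Proposition \ref{prop:corner} to each offset curve $C_{r_0}$ with $r_0>\mu_0$ (which has genuine corners) to obtain a strict one-sided derivative bound $\frac{dL_{\mathcal{Q}}}{dr}<-2A(\mathcal{P})$, combine this with the exact rate $-2A(\mathcal{P})$ on $[0,\mu_0]$, and integrate. You are more careful about the final step, invoking a Denjoy--Bourbaki comparison for Dini derivatives and isolating a single subinterval where the strict bound is used, which is a legitimate refinement of the paper's terser assertion that ``integrating from $0$ to $r$'' gives the strict inequality.
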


\begin{proof}
By Proposition \ref{prop:corner}, $\frac{dL_{Q}}{ds}\leq -2A(\mathcal{P})$
with equality if and only if $s\leq \mu _{0}$. In fact, $C_{r}$ has a corner
if and only if $r>\mu _{0}$. Integrating from $0$ to $r$ we obtain $L_{%
\mathcal{Q}}(r)<L_{\mathcal{Q}}-2A(\mathcal{P})r$.
\end{proof}

Now we can complete the proof of Lemma \ref{teo01}: if equality holds in \ref%
{eqn:14}, then Proposition \ref{prop:ALr} and Corollary \ref{cor:Lr} imply
that $r_{\mathrm{in}}\leq \mu _{0}$. But then Lemma \ref{lemma:rinmu0}
implies that $r_{\mathrm{in}}=\mu _{0}$ and $\gamma $ is a $P$-circle.

\textbf{Remark.} Lemma \ref{teo01} is not necessarily true if $\gamma $ and
the $\mathcal{P}$-ball are not smooth! A counterexample: take $\mathcal{P}$
to be the square whose vertices are $(\pm 1,\pm 1)$ (so $\mathcal{Q}$ will
be the square $\left\vert x\right\vert +\left\vert y\right\vert \leq 1$) and 
$\gamma $ to be the rectangle with vertices $(\pm 2,\pm 1)$.

\section{\label{SecCurv}The minkowskian curvature flow}

We define the minkowskian curvature flow to be a family of closed curves $%
F:S^{1}\times \lbrack 0,T)\rightarrow \mathbb{R}^{2}$ satisfying the
following:%
\begin{align}
\frac{\partial F}{\partial u}(u,t)& =v(u,t).q(\theta (u,t));\ \mathrm{and}
\label{eqn:7} \\
\frac{\partial F}{\partial t}(u,t)& =-k(u,t).p(\theta (u,t))  \notag \\
F\left( u,0\right) & =\gamma \left( u\right)  \notag
\end{align}%
where $\gamma $ is a simple closed curve and, as usual, $\theta (u,t)$ is
defined such that the angle between the $x$-axis and $\partial F/\partial u$
at the point $(u,t)$ is $\theta (u,t)+\pi /2$.

\begin{lemma}
\label{lemma1} The following hold at each point of the flow:

\begin{description}
\item[(a)] $\displaystyle\frac{\partial v}{\partial t}=-k^{2}v$; and

\item[(b)] $\displaystyle\frac{\partial \theta }{\partial t}=\frac{1}{v\left[
q,q^{\prime }\right] }\frac{\partial k}{\partial u}=\frac{1}{\left[
q,q^{\prime }\right] }\frac{\partial k}{\partial s}$
\end{description}
\end{lemma}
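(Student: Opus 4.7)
The plan is the standard equality-of-mixed-partials trick applied in the basis $\{q(\theta),q'(\theta)\}$, which is linearly independent since $[q,q']=a^{-2}\neq 0$. From the flow equations I would compute
\begin{equation*}
\frac{\partial^2 F}{\partial t\,\partial u}=\frac{\partial}{\partial t}\bigl(v\,q(\theta)\bigr)=v_t\,q+v\,\theta_t\,q',
\end{equation*}
\begin{equation*}
\frac{\partial^2 F}{\partial u\,\partial t}=\frac{\partial}{\partial u}\bigl(-k\,p(\theta)\bigr)=-k_u\,p-k\,\theta_u\,p'.
\end{equation*}
The next step is to rewrite $p$ and $p'$ in the basis $\{q,q'\}$ using the duality relations from Section \ref{SecMink}: equation (\ref{eqn:3}) gives $p=-a^2 q'$, while combining $p'=(a+a'')e_\theta$ from (\ref{eqn:Mink}) with $q=a^{-1}e_\theta$ from (\ref{eqn:2}) gives $p'=a(a+a'')\,q$. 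Substituting,
\begin{equation*}
\frac{\partial^2 F}{\partial u\,\partial t}=-k\,\theta_u\,a(a+a'')\,q+k_u\,a^2\,q'.
\end{equation*}

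Matching coefficients of $q$ and $q'$ on both sides then yields two scalar equations:
\begin{equation*}
v_t=-k\,\theta_u\,a(a+a''),\qquad v\,\theta_t=k_u\,a^2.
\end{equation*}
The second one is exactly (b), since $a^2=1/[q,q']$ and $\partial/\partial u=v\,\partial/\partial s$ by definition of $v$. For (a), I still need to eliminate $\theta_u$ in favor of $v$. This is where I use the $\theta$-parameterization of Section \ref{SecMink}: on a strictly convex curve $\gamma'(\theta)=\lambda(\theta)\,q(\theta)$, so $v=\lambda\,\theta_u$, and formula (\ref{eqn:5}) for the curvature gives $\lambda=[p,p']/k=a(a+a'')/k$. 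Therefore $\theta_u=vk/\bigl(a(a+a'')\bigr)$, and substituting into the first scalar equation collapses it to $v_t=-k^2 v$, proving (a).

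The only real subtlety, and thus the main obstacle, is the legitimacy of the $\theta$-parameterization used in the last step: one must know that the evolving curve remains strictly convex and smooth so that $\theta\mapsto F(\cdot,t)$ is a valid change of variables and $[p,p']>0$. This is guaranteed by the standing hypotheses on $\mathcal{P}$ and $\gamma$ together with the short-time existence (addressed later in Section \ref{SecExist}); granted this, everything else is just linear-algebraic bookkeeping in the basis $\{q,q'\}$.
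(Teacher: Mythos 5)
Your proof is correct and follows essentially the same route as the paper: equate the mixed partials $\partial_t\partial_u F$ and $\partial_u\partial_t F$, use the duality relations to express everything in a fixed two-dimensional basis, and match coefficients. The paper works in the basis $\{p,q\}$ (writing $q'=-[q,q']p$ and $p'=[p,p']q$) and absorbs the substitution $\theta_u=v/\lambda$, $\lambda=[p,p']/k$ inline, whereas you work in $\{q,q'\}$ and substitute at the end; since $p$ and $q'$ are proportional, this is merely a cosmetic reshuffling of the same argument.
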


\begin{proof}
Notice first that 
\begin{equation*}
\frac{\partial }{\partial t}\left( \frac{\partial F}{\partial u}\right) =%
\frac{\partial v}{\partial t}.q+v\frac{\partial \theta }{\partial t}%
.q^{\prime }=\frac{\partial v}{\partial t}.q-v\frac{\partial \theta }{%
\partial t}[q,q^{\prime }].p
\end{equation*}%
Now, 
\begin{equation*}
\frac{\partial }{\partial u}\left( \frac{\partial F}{\partial t}\right) =-%
\frac{\partial k}{\partial u}.p-k\frac{\partial \theta }{\partial u}%
.p^{\prime }=-\frac{\partial k}{\partial u}.p-k\frac{v}{\lambda }%
[p,p^{\prime }].q=-\frac{\partial k}{\partial u}.p-k^{2}v.q
\end{equation*}%
Then the result follows since $p$ and $q$ are always linearly independent.
\end{proof}

\begin{lemma}
\label{lemma2} The evolutions of the $\mathcal{Q}$-arclength and of the area
are given respectively by 
\begin{equation*}
\frac{dL_{\mathcal{Q}}}{dt}=-\int_{0}^{L_{\mathcal{Q}}(t)}k^{2}(s,t)ds;\ 
\mathrm{and}
\end{equation*}%
\begin{equation*}
\frac{dA}{dt}=-2A(\mathcal{P})
\end{equation*}
\end{lemma}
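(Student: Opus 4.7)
The plan is to treat the two evolutions separately, both by expressing the quantity as an integral over the fixed parameter domain $u \in S^{1}$ and differentiating under the integral sign using the flow equations \eqref{eqn:7} and Lemma \ref{lemma1}.

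For the first formula, I would start from the definition
\[
L_{\mathcal{Q}}(t) = \int_{0}^{2\pi} v(u,t)\,du,
\]
which follows immediately from $ds = v\,du$. Differentiating under the integral and applying part \textbf{(a)} of Lemma \ref{lemma1},
\[
\frac{dL_{\mathcal{Q}}}{dt} = \int_{0}^{2\pi} \frac{\partial v}{\partial t}\,du = -\int_{0}^{2\pi} k^{2}\,v\,du = -\int_{0}^{L_{\mathcal{Q}}(t)} k^{2}(s,t)\,ds,
\]
which is the desired identity. This part is essentially immediate once Lemma \ref{lemma1} is in hand.

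For the area formula I would use the bracket version of the shoelace formula, valid since $F(\cdot,t)$ is a closed simple curve:
\[
A(t) = \tfrac{1}{2}\int_{0}^{2\pi}\bigl[F,\partial_{u} F\bigr]\,du.
\]
Differentiating in $t$ and integrating by parts on the term involving $\partial_{u}\partial_{t}F$ (no boundary terms since the curve is closed) shows
\[
\frac{dA}{dt} = \int_{0}^{2\pi}\bigl[\partial_{t}F,\partial_{u}F\bigr]\,du = \int_{0}^{2\pi}\bigl[-kp,\,vq\bigr]\,du = -\int_{0}^{2\pi} k\,v\,[p,q]\,du.
\]
The key observation is then that $[p(\theta),q(\theta)] = 1$ identically: this is a direct computation from \eqref{eqn:Mink} and \eqref{eqn:2}, since $p = ae_{r} + a'e_{\theta}$ and $q = \frac{1}{a}e_{\theta}$ yield $[p,q] = [ae_{r},\tfrac{1}{a}e_{\theta}] = 1$. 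Substituting and recalling that $v\,du = ds$, one obtains
\[
\frac{dA}{dt} = -\int_{0}^{L_{\mathcal{Q}}(t)} k(s,t)\,ds = -2A(\mathcal{P}),
\]
the last equality being precisely Proposition \ref{prop1}\textbf{(a)}.

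The proof is almost entirely mechanical given the preceding machinery; the only step requiring a bit of care is the integration by parts (making explicit that boundary terms vanish on $S^{1}$) and the little identity $[p,q] \equiv 1$, which is what allows the flow equations to interact cleanly with the shoelace area formula. If I were worried about anything, it would be verifying that this pointwise identity holds in the intended sign convention — but the computation above confirms it matches the conventions fixed in Section \ref{SecMink}.
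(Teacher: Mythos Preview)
Your proof is correct and follows essentially the same route as the paper: both parts start from the integral over the fixed $u$-domain, use Lemma \ref{lemma1}(a) for $L_{\mathcal{Q}}$, and for $A$ differentiate the shoelace formula, integrate by parts, and reduce $[-kp,vq]$ to $-kv$ before invoking Proposition \ref{prop1}(a). The only difference is cosmetic: you spell out the identity $[p,q]=1$ and the vanishing of boundary terms, which the paper leaves implicit.
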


\begin{proof}
Since $L_{\mathcal{Q}}(t)=\displaystyle\int_{0}^{2\pi }v(u,t)du$ we have 
\begin{equation*}
\frac{dL_{\mathcal{Q}}}{dt}=\int_{0}^{2\pi }\frac{\partial v}{\partial t}%
du=-\int_{0}^{2\pi }k^{2}v\ du=-\int_{0}^{L_{\mathcal{Q}}}k^{2}\ ds
\end{equation*}%
The area $A(t)$ of the curve at time $t$ is given by 
\begin{equation*}
\frac{dA}{dt}=\displaystyle\frac{1}{2}\int_{0}^{2\pi }\left[ F(u,t),\frac{%
\partial F}{\partial u}(u,t)\right] du
\end{equation*}%
\newline
Thus, integrating by parts,%
\begin{align*}
\frac{dA}{dt}& =\frac{1}{2}\int_{0}^{2\pi }\left[ \frac{\partial F}{\partial
t},\frac{\partial F}{\partial u}\right] du+\frac{1}{2}\int_{0}^{2\pi }\left[
F,\frac{\partial ^{2}F}{\partial t\partial u}\right] du= \\
& =\int_{0}^{2\pi }\left[ \frac{\partial F}{\partial t},\frac{\partial F}{%
\partial u}\right] du=\int_{0}^{2\pi }[-kp,vq]du= \\
& =\int_{0}^{2\pi }-kv\ du=-\int_{0}^{L_{\mathcal{Q}}}k\ ds=-2A(\mathcal{P})
\end{align*}
\end{proof}

With these evolution formulae one can easily show that the evolution of the
isoperimetric ratio is 
\begin{equation}
\frac{d}{dt}\left( \frac{L_{\mathcal{Q}}^{2}}{A}\right) =-\frac{2L_{\mathcal{%
Q}}}{A}\left( \int_{0}^{L_{\mathcal{Q}}}k^{2}ds-A(\mathcal{P})\frac{L_{%
\mathcal{Q}}}{A}\right)  \label{eqn:disodt}
\end{equation}%
which, given (\ref{eqn:12}), shows that the isoperimetric ratio is
nonincreasing along the motion. In the next section we will prove that, as
in the euclidean case, if the flow continues until the area converges to
zero and the curves remain simple and convex along the motion, then the
isoperimetric ratio converges to the optimum value $4A(\mathcal{P})$. But
first, we establish the evolution of the curvature function.

\begin{lemma}
\label{lemmapde} The minkowskian curvature $k$ evolves according to the PDE%
\begin{equation*}
\frac{\partial k}{\partial \tau }=\frac{a}{a+a^{\prime \prime }}k^{2}\frac{%
\partial ^{2}k}{\partial \theta ^{2}}+\frac{2a^{\prime }}{a+a^{\prime \prime
}}k^{2}\frac{\partial k}{\partial \theta }+k^{3},
\end{equation*}%
where $\tau $ is the time parameter which is independent with $\theta $.
\end{lemma}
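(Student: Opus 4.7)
The essential difficulty is that in the original flow the parameter $u$ (and therefore $\theta(u,t)$) is tied to the motion, whereas the PDE requires $\theta$ to be an \emph{independent} space coordinate. So the plan is to reparameterize the flow in coordinates $(\theta,\tau)$ with $\tau=t$ and $\theta$ playing the role of arc position, write down a new velocity field $G_\tau$ in this frame, and extract $\partial k/\partial\tau$ via a compatibility equation.

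Concretely, I would let $G(\theta,\tau)=F(u(\theta,\tau),\tau)$ where $u$ is defined implicitly by the convexity assumption (so $\theta\mapsto\theta(u,\tau)$ is invertible at each time). The identity $G_\theta=\lambda\, q(\theta)$ becomes the \emph{definition} of $\lambda=\lambda(\theta,\tau)$ in this frame, and the chain rule gives
\begin{equation*}
G_\tau = F_u u_\tau + F_t = v\,u_\tau\, q - k p, \qquad u_\tau = -\frac{\theta_t}{\theta_u}.
\end{equation*}
From $\partial F/\partial u=vq$ compared with $G_\theta=\lambda q$ one reads $\theta_u=v/\lambda$, while Lemma \ref{lemma1}\textbf{(b)} together with $[q,q']=a^{-2}$ gives $\theta_t=a^2 k_s=a^2 k_\theta/\lambda$. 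Plugging in collapses the tangential component cleanly and yields $G_\tau=-a^2 k_\theta\, q - k\, p$.

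Next, I would compute $G_{\theta\tau}$ two ways. On the one hand, differentiating $G_\theta=\lambda q(\theta)$ in $\tau$ at fixed $\theta$ gives simply $\lambda_\tau q$. On the other hand, differentiating $G_\tau=-a^2 k_\theta q - k p$ in $\theta$ produces both $q$ and $p$ components, which can be combined using the identities from Section \ref{SecMink}: $p=-a^2 q'$ (so that $q'=-p/a^2$) and $p'=(a+a'')e_\theta=a(a+a'')q$. The $p$-components cancel exactly, leaving
\begin{equation*}
\lambda_\tau = -(a^2 k_\theta)' - k\, a(a+a'').
\end{equation*}

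Finally, since $k\lambda=[p,p']=a(a+a'')$ depends only on $\theta$, differentiating in $\tau$ at fixed $\theta$ gives $\lambda k_\tau + k\lambda_\tau = 0$, i.e.\ $k_\tau = -(k/\lambda)\lambda_\tau$. Substituting the expression for $\lambda_\tau$, using $(a^2 k_\theta)'=a^2 k_{\theta\theta}+2aa' k_\theta$ and $a(a+a'')/\lambda = k$, produces exactly the stated PDE. The main obstacle is the bookkeeping in the compatibility step $G_{\theta\tau}=G_{\tau\theta}$: the $p$-component cancellation must come out exactly for the argument to yield a purely parabolic equation, and this relies on the precise relationship $p=-a^2 q'$ between the dual parameterizations, so that is the one identity I would double-check most carefully.
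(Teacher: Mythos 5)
Your proof is correct, and it takes a genuinely different route from the paper's. The paper derives the commutator relation $\partial_t\partial_s-\partial_s\partial_t=k^2\partial_s$ from Lemma \ref{lemma1}(a), applies it to the function $\theta(s,t)$, and then pushes through a fairly heavy change of variables from $s$- to $\theta$-derivatives, carrying along the explicit formulas $[p,p']=a(a+a'')$ and $[q,q']=a^{-2}$. You instead pass immediately to the $(\theta,\tau)$ frame, compute the reparameterized velocity $G_\tau=-a^2 k_\theta\,q-kp$ (which, incidentally, is exactly the vector field the paper later rediscovers as part (b) of Theorem \ref{teo4}), and then use only the plain equality of mixed partials $G_{\theta\tau}=G_{\tau\theta}$ rather than a commutator with a source term. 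The cancellation of the $p$-components you flag does work: from (\ref{eqn:3}) $q'=-p/a^{2}$ turns $-a^2 k_\theta q'$ into $+k_\theta p$, which exactly kills the $-k_\theta p$ coming from $-\partial_\theta(kp)$, and $p'=a(a+a'')q$ feeds the remaining $p'$ term into the $q$-component. The final step, differentiating $k\lambda=[p,p']=a(a+a'')$ in $\tau$ at fixed $\theta$ to get $k_\tau=-(k/\lambda)\lambda_\tau$, is clean precisely because the right-hand side has no $\tau$-dependence. Your approach is more geometric and avoids Lemma \ref{lemma1}(a) entirely, trading the paper's algebraic expansion for the observation that the product $k\lambda$ is a flow invariant; it also unifies the derivation of the PDE with the velocity formula that the paper proves separately in Section \ref{SecExist}. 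Both proofs need Lemma \ref{lemma1}(b), and both arrive at the same equation, but yours is shorter and easier to check.
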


\begin{proof}
Using Lemma \ref{lemma1}\textbf{(a)} and $ds=vdu$, we arrive at%
\begin{equation*}
\frac{\partial }{\partial t}\frac{\partial }{\partial s}-\frac{\partial }{%
\partial s}\frac{\partial }{\partial t}=k^{2}\frac{\partial }{\partial s}
\end{equation*}%
just as in the Euclidean case. We apply this to the function $\theta =\theta
(s,t)$ and use $ds=\lambda d\theta $ and Lemma \ref{lemma1}\textbf{(b)} to
obtain 
\begin{equation*}
\frac{\partial }{\partial t}\left( \frac{k}{[p,p^{\prime }]}\right) -\frac{%
\partial }{\partial s}\left( \frac{1}{\left[ q,q^{\prime }\right] }\frac{%
\partial k}{\partial s}\right) =\frac{k^{3}}{[p,p^{\prime }]}.
\end{equation*}%
Unfortunately $p$ and $q$ now depend on $t$ as well, so, using equations (%
\ref{eqn:Mink}) and (\ref{eqn:2}), we arrive at%
\begin{equation*}
\frac{1}{a\left( a+a^{\prime \prime }\right) }\frac{\partial k}{\partial t}-%
\frac{k}{a^{2}\left( a+a^{\prime \prime }\right) ^{2}}\left[ a^{\prime
}\left( a+a^{\prime \prime }\right) +a\left( a^{\prime }+a^{\prime \prime
\prime }\right) \right] \frac{\partial \theta }{\partial t}-2aa^{\prime }%
\frac{\partial k}{\partial s}\frac{\partial \theta }{\partial s}-a^{2}\frac{%
\partial ^{2}k}{\partial s^{2}}=\frac{k^{3}}{a\left( a+a^{\prime \prime
}\right) }
\end{equation*}%
Now we change all $s$-derivatives to $\theta $-derivatives using equation (%
\ref{eqn:5}), and use Lemma \ref{lemma1}\textbf{(b)} to eventually get
to%
\begin{equation*}
\frac{\partial k}{\partial t}-\frac{2a^{\prime }}{a+a^{\prime \prime }}k^{2}%
\frac{\partial k}{\partial \theta }-\frac{a}{a+a^{\prime \prime }}k\left( 
\frac{\partial k}{\partial \theta }\right) ^{2}-\frac{a}{a+a^{\prime \prime }%
}k^{2}\frac{\partial ^{2}k}{\partial \theta ^{2}}=k^{3}
\end{equation*}%
Now, writing $k=k(\theta ,\tau )$ yields 
\begin{equation*}
\frac{\partial k}{\partial t}=\frac{\partial k}{\partial \theta }\frac{%
\partial \theta }{\partial t}+\frac{\partial k}{\partial \tau }.
\end{equation*}%
Using this (and replacing once again $\displaystyle\frac{\partial \theta }{%
\partial t}$ using Lemma \ref{lemma1}\textbf{(b))} we finish the proof.
\end{proof}

\section{\label{SecConv}Convergence of the isoperimetric ratio}

We now turn to show that the flow rounds the curves if they approach a
vanishing point. In the following $\gamma (u,t):S^{1}\times \lbrack
0,T)\rightarrow \mathbb{R}^{2}$ is a family (on parameter $t$) of curves in $%
\mathcal{C}$ which solves the minkowskian curvature flow (in the next
section, we will show that $\gamma \left( \cdot ,0\right) \in \mathcal{%
C\Rightarrow \gamma }\left( \cdot ,t\right) \in \mathcal{C}$). The $\mathcal{%
Q}$-length and the area of the curve at time $t$ are denoted, as usual, by $%
L_{\mathcal{Q}}(t)$ and $A(t)$.

\begin{lemma}
\label{lemma6} If $\displaystyle\lim_{t\rightarrow T}A(t)=0$ then 
\begin{equation*}
\liminf_{t\rightarrow T}L_{\mathcal{Q}}(t)\left( \int_{0}^{L_{\mathcal{Q}%
}(t)}k^{2}ds-A(\mathcal{P})\frac{L_{\mathcal{Q}}(t)}{A(t)}\right) =0
\end{equation*}
\end{lemma}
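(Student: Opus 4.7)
The plan is to rewrite the expression inside the $\liminf$ in terms of the evolution of the isoperimetric ratio and then argue by contradiction using the isoperimetric lower bound. First I would note that equation (\ref{eqn:disodt}) gives the identity
\begin{equation*}
L_{\mathcal{Q}}(t)\left(\int_{0}^{L_{\mathcal{Q}}(t)}k^{2}\,ds - A(\mathcal{P})\frac{L_{\mathcal{Q}}(t)}{A(t)}\right) = -\frac{A(t)}{2}\frac{d}{dt}\!\left(\frac{L_{\mathcal{Q}}^{2}}{A}\right),
\end{equation*}
and the right side is nonnegative by Corollary \ref{teo1}.

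Next I would suppose, for contradiction, that the $\liminf$ is strictly positive. Then there exist $c>0$ and $t^{*}<T$ such that for all $t\in[t^{*},T)$,
\begin{equation*}
-\frac{d}{dt}\!\left(\frac{L_{\mathcal{Q}}^{2}}{A}\right)(t) \geq \frac{2c}{A(t)}.
\end{equation*}
From Lemma \ref{lemma2} we have $A(t) = 2A(\mathcal{P})(T-t)$ (since $A(T)=0$), so this lower bound becomes $c/(A(\mathcal{P})(T-t))$.

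Now I would integrate this differential inequality from $t^{*}$ to some $t_{1}\in(t^{*},T)$. The right side yields $\frac{c}{A(\mathcal{P})}\log\frac{T-t^{*}}{T-t_{1}}$, which diverges to $+\infty$ as $t_{1}\to T$. Hence $\frac{L_{\mathcal{Q}}^{2}}{A}(t_{1})\to -\infty$, contradicting the isoperimetric inequality (\ref{eqn:isop}), which forces $\frac{L_{\mathcal{Q}}^{2}}{A}\geq 4A(\mathcal{P})>0$ throughout the flow. This contradiction establishes the lemma.

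The main obstacle I anticipate is essentially bookkeeping: making sure the identity linking the expression to $\frac{d}{dt}(L_{\mathcal{Q}}^{2}/A)$ is extracted correctly and that the integration step is justified. Since $\frac{d}{dt}(L_{\mathcal{Q}}^{2}/A)\leq 0$ along the flow by Corollary \ref{teo1} and (\ref{eqn:disodt}), the isoperimetric ratio is monotone and the ODE-style integration of the lower bound is unambiguous, so no further regularity input beyond the smoothness of the flow (assumed throughout this section) is needed.
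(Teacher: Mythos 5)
Your proof is correct and follows essentially the same argument as the paper: contradiction via the evolution formula (\ref{eqn:disodt}) for the isoperimetric ratio, the linear area law from Lemma \ref{lemma2}, and the logarithmic divergence forcing $L_{\mathcal{Q}}^{2}/A$ below the isoperimetric bound (\ref{eqn:isop}). The paper phrases the integration through $g(t)=\log A(t)$ rather than writing $A(t)=2A(\mathcal{P})(T-t)$ explicitly, but these are the same computation.
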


\begin{proof}
Suppose there exist $\epsilon >0$ and $t_{1}\in (0,T)$ such that 
\begin{equation*}
L_{\mathcal{Q}}(t)\left( \int_{0}^{L_{\mathcal{Q}}(t)}k^{2}ds-A(\mathcal{P})%
\frac{L_{\mathcal{Q}}(t)}{A(t)}\right) >\epsilon
\end{equation*}%
for every $t\in (t_{1},T)$. Put $g(t)=\log (A(t))$ for $t\in \lbrack 0,T)$.
Using the evolution of the isoperimetric ratio (\ref{eqn:disodt}) we have 
\begin{equation*}
\frac{d}{dt}\left( \frac{L_{\mathcal{Q}}^{2}}{A}\right) \leq -\frac{2}{A}%
\epsilon =\frac{\epsilon }{A(\mathcal{P})}\frac{dg}{dt}
\end{equation*}%
Fix $t\in (t_{1},T)$. Isoperimetric inequality (\ref{eqn:isop}) and
integration (from $t_{1}$ to $t$) yield 
\begin{equation*}
4A(\mathcal{P})\leq \frac{L_{\mathcal{Q}}^{2}(t)}{A(t)}\leq \frac{L_{%
\mathcal{Q}}^{2}(t_{1})}{A(t_{1})}-\frac{\epsilon }{A(\mathcal{P})}\log
(A(t_{1}))+\frac{\epsilon }{A(\mathcal{P})}\log (A(t))
\end{equation*}%
But the right hand side goes to $-\infty $ as $t$ converges to $T$. This
contradiction completes the proof.
\end{proof}

Now we are ready to prove the main theorem of this section.

\begin{teo}
\label{teo3} If $\displaystyle\lim_{t\rightarrow T}A(t) = 0$ then $%
\displaystyle\lim_{t \rightarrow T} \frac{L_{\mathcal{Q}}^2(t)}{A(t)} = 4A(%
\mathcal{P})$
\end{teo}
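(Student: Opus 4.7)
The plan is to combine the monotonicity of the isoperimetric ratio $R(t):=L_{\mathcal{Q}}^{2}(t)/A(t)$ along the flow with the quantitative Gage inequality (Theorem \ref{teoGage}) and the Hausdorff-convergence property (3) of Proposition \ref{teo2} (transferred to $F$), mirroring Gage's original Euclidean argument.

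First I would note that equation (\ref{eqn:disodt}) together with Corollary \ref{teo1} forces $R(t)$ to be nonincreasing, while the Minkowskian isoperimetric inequality (\ref{eqn:isop}) bounds it below by $4A(\mathcal{P})$. Hence the limit $\ell:=\lim_{t\to T}R(t)$ exists and satisfies $\ell\geq 4A(\mathcal{P})$; it suffices to produce a subsequence $t_{j}\to T$ along which $R(t_{j})\to 4A(\mathcal{P})$.

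Next I would rewrite Theorem \ref{teoGage} as
\[
\int_{0}^{L_{\mathcal{Q}}} k^{2}\,ds \;-\; A(\mathcal{P})\,\frac{L_{\mathcal{Q}}}{A} \;\geq\; \frac{F(\gamma)}{1-F(\gamma)}\,A(\mathcal{P})\,\frac{L_{\mathcal{Q}}}{A},
\]
and multiply by $L_{\mathcal{Q}}$ to obtain
\[
L_{\mathcal{Q}}\left(\int_{0}^{L_{\mathcal{Q}}} k^{2}\,ds - A(\mathcal{P})\frac{L_{\mathcal{Q}}}{A}\right) \;\geq\; \frac{F(\gamma)}{1-F(\gamma)}\,A(\mathcal{P})\,R(t).
\]
Lemma \ref{lemma6} supplies $t_{j}\to T$ making the left side tend to $0$; since $R(t_{j})\geq 4A(\mathcal{P})>0$, this forces $F(\gamma(\cdot,t_{j}))\to 0$. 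I would then normalize by setting $\eta_{j}=\gamma(\cdot,t_{j})\sqrt{A(\mathcal{P})/A(t_{j})}$, so that $A(\eta_{j})=A(\mathcal{P})$ and $L_{\mathcal{Q}}(\eta_{j})^{2}=R(t_{j})\,A(\mathcal{P})\leq R(0)\,A(\mathcal{P})$. After translating each $\eta_{j}$ to contain the origin, convexity combined with the uniform Minkowski-length bound (and equivalence of the Minkowski norm with the Euclidean one) confines all $\eta_{j}$ to a single compact region. Property (3) of Proposition \ref{teo2}, in the form stated for $F$, then gives $\eta_{j}\to\mathcal{P}$ in the Hausdorff metric; by continuity of $L_{\mathcal{Q}}$ and $A$ on convex bodies, $R(t_{j})=L_{\mathcal{Q}}^{2}(\eta_{j})/A(\eta_{j})\to L_{\mathcal{Q}}^{2}(\mathcal{P})/A(\mathcal{P})=4A(\mathcal{P})$ (using Proposition \ref{prop1}(a) for a $\mathcal{P}$-circle to see that $L_{\mathcal{Q}}(\mathcal{P})=2A(\mathcal{P})$), and monotonicity of $R$ upgrades this to the full limit $\ell=4A(\mathcal{P})$.

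The main obstacle I anticipate is the compactness step: verifying that the normalized curves $\eta_{j}$ sit in a common bounded region so that property (3) of Proposition \ref{teo2} genuinely applies. The a priori monotone bound on $R(t)$ caps $L_{\mathcal{Q}}(\eta_{j})$ uniformly, and convexity plus norm equivalence then yields a uniform diameter bound once each $\eta_{j}$ is translated to contain the origin, so this should be dispatched cleanly without ever invoking the PDE of Lemma \ref{lemmapde}.
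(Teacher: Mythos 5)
Your proposal is correct and follows essentially the same route as the paper: reformulate Theorem \ref{teoGage} to extract a lower bound involving $F(\gamma)$, invoke Lemma \ref{lemma6} to get a subsequence along which $F\to 0$, then use property \textbf{(3)} plus the monotonicity of the isoperimetric ratio. The only deviation is cosmetic: where the paper multiplies by $L_{\mathcal{Q}}$ and applies Schwarz together with Proposition \ref{prop1}(a) to get the lower bound $4A(\mathcal{P})^{2}F(\gamma)$, you instead solve for $\frac{F}{1-F}$ and use the isoperimetric lower bound $R(t)\geq 4A(\mathcal{P})$ directly — both are valid, and your explicit perimeter/diameter argument for the compactness step is a reasonable filling-in of the paper's reference to Gage's technique.
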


\begin{proof}
First we rewrite the inequality in Theorem \ref{teoGage} as 
\begin{equation*}
\int_{0}^{L_{\mathcal{Q}}}k^{2}ds-A(\mathcal{P})\frac{L_{\mathcal{Q}}}{A}%
\geq F(\gamma )\int_{0}^{L_{\mathcal{Q}}}k^{2}ds
\end{equation*}%
Schwarz inequality yields 
\begin{equation*}
L_{\mathcal{Q}}\int_{0}^{L_{\mathcal{Q}}}k^{2}ds\geq \left( \int_{0}^{L_{%
\mathcal{Q}}}k\ ds\right) ^{2}=4A(\mathcal{P})^{2}
\end{equation*}%
Combining both inequalities we have the following inequality for each curve $%
\gamma (\cdot ,t)$: 
\begin{equation*}
L_{\mathcal{Q}}\left( \int_{0}^{L_{\mathcal{Q}}}k^{2}ds-A(\mathcal{P})\frac{%
L_{\mathcal{Q}}}{A}\right) \geq 4A(\mathcal{P})^{2}F(\gamma )
\end{equation*}%
The previous Lemma guarantees that the left hand side converges to $0$ for
some subsequence $t_{j}\rightarrow T$. Since $F$ is a non-negative
functional we have also $F(t_{j})\rightarrow 0$ as $t_{j}\rightarrow T$. Let 
$\eta _{j}$ be the normalized curve $\eta _{j}=\displaystyle\sqrt{\frac{A(%
\mathcal{P})}{A}}\gamma (\cdot ,t_{j})$. Using the same technique presented
in \textbf{\cite{gage2}} one can show that the curves $\eta _{j}$ lie in one
same bounded region of the plane and then. Since $F$ satisfies property 
\textbf{\textit{(3)}} of Theorem \ref{teo2}, the region $H_{j}$ enclosed by $\eta
_{j} $ converges in the Hausdorff topology to the unit $\mathcal{P}$-disc.
It follows that $\displaystyle\frac{L_{\mathcal{Q}}^{2}(t_{j})}{A(t_{j})}$
converges to $4A(\mathcal{P})$ as $t_{j}\rightarrow T$. Since $\displaystyle%
\frac{L_{\mathcal{Q}}^{2}(t)}{A(t)}$ is nonincreasing the convergence holds,
in fact, for every value of the parameter and we have the desired result.
\end{proof}

\section{\label{SecExist}Existence of the minkowskian curvature flow}

The final step is to prove that the minkowskian curvature flow in fact
exists and continues until the area enclosed by the curves converges to
zero. We now establish:

\begin{lemma}
\label{lemma7} Let $k:[0,2\pi ]\rightarrow \mathbb{R}$ be a $C^{1}$ positive 
$2\pi $-periodic function. Then, $k$ is the Minkowski curvature of a simple
closed strictly convex $C^{2}$ plane curve if and only if 
\begin{equation}
\int_{0}^{2\pi }\frac{a(\theta )+a^{\prime \prime }(\theta )}{k(\theta )}%
\sin \theta \ d\theta =\int_{0}^{2\pi }\frac{a(\theta )+a^{\prime \prime
}(\theta )}{k(\theta )}\cos \theta \ d\theta =0  \label{eqn:16}
\end{equation}
\end{lemma}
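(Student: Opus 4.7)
The key observation is that the integrand $(a+a'')/k$ equals $\lambda/a$, where $\lambda(\theta)$ is the speed function from Section \ref{SecMink}: equation (\ref{eqn:5}) gives $\lambda = [p,p']/k$, and (\ref{eqn:Mink}) gives $[p,p'] = a(a+a'')$. Since $q(\theta) = e_\theta/a(\theta)$ by (\ref{eqn:2}), we have $\lambda(\theta) q(\theta) = \tfrac{a(\theta)+a''(\theta)}{k(\theta)} e_\theta$, so the two conditions in (\ref{eqn:16}) are equivalent to the single vector identity $\int_0^{2\pi} \lambda(\theta) q(\theta)\, d\theta = 0$, that is, $\int_0^{2\pi} \gamma'(\theta)\, d\theta = 0$, which is simply the closure condition for $\gamma$.

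For necessity, if $\gamma$ is a simple, closed, strictly convex $C^2$ curve with Minkowski curvature $k$, strict convexity allows the $\theta$-parametrization with $\gamma'(\theta) = \lambda(\theta) q(\theta)$, $\lambda>0$; the closing condition $\int_0^{2\pi}\gamma'\,d\theta=0$ becomes exactly (\ref{eqn:16}) componentwise, using $e_\theta = (-\sin\theta,\cos\theta)$.

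For sufficiency, given $k$ satisfying (\ref{eqn:16}), I would define
\begin{equation*}
\gamma(\theta) := \int_0^\theta \frac{a(\sigma)+a''(\sigma)}{k(\sigma)}\, e_\sigma\, d\sigma.
\end{equation*}
Condition (\ref{eqn:16}) yields $\gamma(2\pi) = \gamma(0)$, so $\gamma$ extends $2\pi$-periodically and is $C^2$ since $(a+a'')/k$ is $C^1$ and $e_\sigma$ is smooth. Differentiating gives $\gamma'(\theta) = \tfrac{a+a''}{k} e_\theta = \lambda(\theta) q(\theta)$ with $\lambda(\theta) = a(\theta)(a(\theta)+a''(\theta))/k(\theta) > 0$ (because $a+a''>0$ by our standing hypothesis on $\mathcal{P}$ and $k>0$ by assumption). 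Thus the tangent at $\gamma(\theta)$ is parallel to $e_\theta$ and makes angle $\theta+\pi/2$ with the $x$-axis, confirming consistency of the $\theta$-parametrization. As $\theta$ ranges over $[0,2\pi]$ the tangent direction performs a single monotonic $2\pi$-rotation, so $\gamma$ is a strictly convex Jordan curve. Finally, formula (\ref{eqn:5}) applied to $\gamma$ recovers the Minkowski curvature as $[p,p']/\lambda = a(a+a'')/\lambda = k$.

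The main technical point is checking that the $\gamma$ produced by antidifferentiation is simple and strictly convex, but this is essentially automatic: a closed $C^2$ plane curve whose tangent direction rotates monotonically through a full $2\pi$ is a strictly convex embedded curve. No winding-number or degree-theoretic correction is needed, because the $\theta$-parametrization of the antiderivative $\gamma$ coincides by construction with the intrinsic tangent-angle parameter of Section \ref{SecMink}.
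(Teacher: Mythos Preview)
Your argument is correct and matches the paper's proof essentially line for line: both directions hinge on the closure condition $\int_0^{2\pi}\gamma'(\theta)\,d\theta=0$ rewritten via $\gamma'=\lambda q=\tfrac{a+a''}{k}\,e_\theta$, and for sufficiency the paper constructs the same antiderivative and invokes injectivity of the Gauss map exactly where you invoke monotone rotation of the tangent. One small notational remark: your symbol $e_\sigma$ in the integrand should be read as $e_\theta|_{\theta=\sigma}=(-\sin\sigma,\cos\sigma)$, as your subsequent differentiation makes clear.
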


\begin{proof}
Suppose first that $\gamma :[0,2\pi ]\rightarrow \mathbb{R}$ is a closed $%
C^{2}$ curve whose curvature is given by $k$. As $\displaystyle%
\int_{0}^{2\pi }\gamma ^{\prime }(\theta )d\theta =0$ we have 
\begin{equation*}
0=\int_{0}^{2\pi }\lambda (\theta )q(\theta )d\theta =\int_{0}^{2\pi }\frac{%
[p(\theta ),p^{\prime }(\theta )]}{k(\theta )a(\theta )}(-\sin \theta ,\cos
\theta )d\theta
\end{equation*}%
And then the desired equalities comes from equation \ref{eqn:Mink}.

On the other hand if $k$ is a $C^{1}$ positive $2\pi $-periodic function
such that (\ref{eqn:16}) holds we can define%
\begin{equation*}
\gamma (\theta )=\left( -\int_{0}^{\theta }\frac{a(\sigma )+a^{\prime \prime
}(\sigma )}{k(\sigma )}\sin \sigma \ d\sigma ,\int_{0}^{\theta }\frac{%
a(\sigma )+a^{\prime \prime }(\sigma )}{k(\sigma )}\cos \sigma \ d\sigma
\right)
\end{equation*}%
which is clearly a closed $C^{2}$ curve. Furthermore,%
\begin{equation*}
\gamma ^{\prime }(\theta )=\frac{a(\theta )+a^{\prime \prime }(\theta )}{%
k(\theta )}(-\sin \theta ,\cos \theta )=\frac{a(a+a^{\prime \prime })(\theta
)}{k(\theta )}q(\theta )=\frac{[p(\theta ),p^{\prime }(\theta )]}{k(\theta )}%
q(\theta )
\end{equation*}%
and then the Minkowski curvature of $\gamma $ is precisely $k$. To complete
the proof notice that $\gamma $ is simple as long as its Gauss map is
injective.
\end{proof}

Now, inspired by Lemma \ref{lemmapde} we will see how the solution to the
curvature motion emerges from the solution of a parabolic differential
equation. From now on, we use $t$ for the time parameter which is
independent with $\theta $.

\begin{teo}
\label{teo4} Consider a function $k:S^{1}\times \lbrack 0,T)\rightarrow 
\mathbb{R}$, such that $k\in C^{2+\alpha ,1+\alpha }(S^{1}\times \lbrack
0,T-\epsilon ])$ for all $\epsilon >0$, satisfying the evolution equation: 
\begin{equation}
\frac{\partial k}{\partial t}=\frac{a}{a+a^{\prime \prime }}k^{2}\frac{%
\partial ^{2}k}{\partial \theta ^{2}}+\frac{2a^{\prime }}{a+a^{\prime \prime
}}k^{2}\frac{\partial k}{\partial \theta }+k^{3}  \label{eqn:17}
\end{equation}%
with initial value $k(\theta ,0)=\varphi (\theta )$ where $\varphi $ is a
strictly positive $C^{1+\alpha }$ function such that: 
\begin{equation*}
\int_{0}^{2\pi }\frac{a(\theta )+a^{\prime \prime }(\theta )}{\varphi
(\theta )}\sin \theta \ d\theta =\int_{0}^{2\pi }\frac{a(\theta )+a^{\prime
\prime }(\theta )}{\varphi (\theta )}\cos \theta \ d\theta =0
\end{equation*}%
Using this function (whose short term existence and uniqueness are
guaranteed by standard theory on parabolic equations) one can build the
family of curves on parameter $t$: 
\begin{align*}
F(\theta ,t)& =\left( -\int_{0}^{\theta }\frac{a(\sigma )+a^{\prime \prime
}(\sigma )}{k(\sigma ,t)}\sin \sigma \ d\sigma -\int_{0}^{t}a(0)k(0,s)\
ds,\right. \\
& \left. \int_{0}^{\theta }\frac{a(\sigma )+a^{\prime \prime }(\sigma )}{%
k(\sigma ,t)}\cos \sigma \ d\sigma -\int_{0}^{t}a(0)\frac{\partial k}{%
\partial \sigma }(0,s)+a^{\prime }(0)k(0,s)\ ds\right)
\end{align*}%
for which the following holds:

\begin{description}
\item[(a)] for each fixed $t$ the map $\theta \mapsto F(\theta,t)$ is a
simple closed strictly convex curve parameterized as usual (the tangent
vector at $\theta$ points in the $q(\theta)$ direction) whose Minkowski
curvature is given by $\theta \mapsto k(\theta,t)$.

\item[(b)] $\displaystyle\frac{\partial F}{\partial t}(\theta ,t)=-k(\theta
,t)p(\theta )-a^{2}(\theta )\frac{\partial k}{\partial \theta }(\theta
,t)q(\theta )$
\end{description}
\end{teo}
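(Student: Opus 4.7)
The plan is to take the proposed $F(\theta,t)$ and directly verify the claimed properties, splitting (a) into (i) closedness, (ii) correctness of the tangent direction and of the Minkowski curvature, (iii) strict convexity, and (iv) simplicity. The single algebraic observation that drives every calculation is the identity
\begin{equation*}
a\,k_{\theta\theta}+2a'\,k_{\theta}+a''\,k=(ak)_{\theta\theta},
\end{equation*}
which, combined with the PDE (\ref{eqn:17}), rewrites the evolution in the compact form
\begin{equation*}
k_{t}=\frac{k^{2}}{a+a''}\bigl[(ak)_{\theta\theta}+ak\bigr].
\end{equation*}

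For part (a), fix $t$. Since the $t$-integrals added to the definition of $F$ do not depend on $\theta$, I differentiate to get
\begin{equation*}
\frac{\partial F}{\partial \theta}=\frac{a+a''}{k}\,(-\sin\theta,\cos\theta)=\frac{a(a+a'')}{k}\,q(\theta)=\frac{[p,p']}{k}\,q(\theta),
\end{equation*}
so the tangent is a strictly positive multiple of $q(\theta)$, $\lambda=[p,p']/k$, and formula (\ref{eqn:5}) gives Minkowski curvature $[p,p']/\lambda=k$; strict convexity follows from $k>0$, and simplicity from the fact that the Gauss map is the identity in the $\theta$-parameterization. What remains is closedness $F(2\pi,t)=F(0,t)$, equivalent (after the $t$-integrals cancel) to
\begin{equation*}
\int_{0}^{2\pi}\frac{a+a''}{k(\theta,t)}\sin\theta\,d\theta=\int_{0}^{2\pi}\frac{a+a''}{k(\theta,t)}\cos\theta\,d\theta=0\quad\text{for all }t.
\end{equation*}
These hold at $t=0$ by the hypothesis on $\varphi$, so it is enough to show that each derivative in $t$ vanishes. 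Differentiating under the integral and using the PDE converts the sine integrand into $-\bigl[(ak)_{\theta\theta}+ak\bigr]\sin\theta$; two integrations by parts on $[0,2\pi]$, whose boundary terms vanish by $2\pi$-periodicity of $a$, $k$, and $\sin\theta$, turn $\int(ak)_{\theta\theta}\sin\theta\,d\theta$ into $-\int ak\sin\theta\,d\theta$, cancelling the remaining $+ak\sin\theta$ term. The cosine integral is identical. This conservation argument is the one genuinely nontrivial step.

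For part (b), I differentiate $F$ componentwise in $t$, pull $\partial_{t}k$ through the $\theta$-integral, and substitute the compact form of the PDE. For the first component this yields
\begin{equation*}
\frac{\partial F_{1}}{\partial t}=\int_{0}^{\theta}\bigl[(ak)_{\sigma\sigma}+ak\bigr]\sin\sigma\,d\sigma\;-\;a(0)k(0,t).
\end{equation*}
Two integrations by parts on $[0,\theta]$, this time keeping the non-vanishing boundary contributions, collapse the integral to $(a'k+ak_{\theta})(\theta)\sin\theta-a(\theta)k(\theta)\cos\theta+a(0)k(0,t)$; the last term cancels the subtracted $a(0)k(0,t)$, and regrouping via $p_{1}(\theta)=a\cos\theta-a'\sin\theta$ and $q_{1}(\theta)=-\sin\theta/a$ gives exactly $-k\,p_{1}(\theta)-a^{2}k_{\theta}\,q_{1}(\theta)$. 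The second component is entirely analogous, and the additive $t$-integral $\int_{0}^{t}[a(0)k_{\sigma}(0,s)+a'(0)k(0,s)]\,ds$ built into $F_{2}$ is precisely what is needed to kill its boundary contribution at $\sigma=0$. The only real difficulty in the whole theorem, beyond the closure-preservation calculation above, is this bookkeeping: the $t$-integral corrections inserted in $F$ are dictated by, and cancel with, the boundary terms produced by integration by parts — this is the reason they had to be included in the definition.
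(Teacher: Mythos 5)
The algebraic core of your argument is correct and, in fact, cleaner than the paper's: rewriting the PDE as
\begin{equation*}
k_{t}=\frac{k^{2}}{a+a''}\bigl[(ak)_{\theta\theta}+ak\bigr]
\end{equation*}
is exactly the compression that makes both the closedness-conservation calculation and the part (b) bookkeeping painless, and your integrations by parts (boundary terms vanishing on $[0,2\pi]$ for closedness, boundary terms surviving on $[0,\theta]$ and cancelling the built-in $t$-integrals for (b)) match the paper's computations.

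However, there is a genuine gap: you never prove that $k(\theta,t)$ remains strictly positive for $t>0$. You invoke $k>0$ twice --- once to say the tangent $\lambda\,q(\theta)$ with $\lambda=[p,p']/k$ is a \emph{positive} multiple of $q$, and once to conclude strict convexity --- but the hypotheses only give positivity at $t=0$. Without positivity the formula for $F$ can divide by zero, $\lambda$ can change sign so the tangent no longer points in the $q(\theta)$ direction, and the identification of $k$ with the Minkowski curvature of a convex curve collapses. The paper devotes the first half of its proof to exactly this: it sets $k_{\mathrm{MIN}}(t)=\inf_{\theta}k(\theta,t)$, supposes $k_{\mathrm{MIN}}$ drops below $k_{\mathrm{MIN}}(0)>0$, picks the first time $t_{0}$ it reaches an intermediate value $\delta$, and at a spatial minimum $\theta_{0}$ derives $\partial_{t}k\le 0$, $\partial_{\theta}k=0$, $\partial_{\theta\theta}k\ge 0$; plugging these into equation (\ref{eqn:17}) with $a/(a+a'')>0$ and $k=\delta>0$ forces $\partial_{t}k\ge k^{3}>0$, a contradiction. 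You need to include this maximum-principle step (or an equivalent one) before any of (a) can be asserted. Once that is added, your proposal is a valid and slightly more streamlined version of the paper's proof.
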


\begin{proof}
For each fixed $t$ the curve $\theta \mapsto F(\theta ,t)$ is, up to a
translation, built as in Lemma \ref{lemma7}, and is clearly parameterized as
usual. So let's begin proving that $k(\theta ,t)$ is a strictly positive
function. Define 
\begin{equation*}
k_{\mathrm{MIN}}(t)=\inf_{[0,2\pi ]}k(\theta ,t)
\end{equation*}%
and notice that $k_{\mathrm{MIN}}$ is a continuous function which is
positive when $t=0$ (by the initial value conditions and compactness). We
claim that $k_{\mathrm{MIN}}$ is bounded from below by $k_{\mathrm{MIN}}(0)$%
. In fact, suppose there exists $t\in (0,T)$ such that $0<k_{\mathrm{MIN}%
}(t)=\delta <k_{\mathrm{MIN}}(0)$ and take $t_{0}=\inf k_{\mathrm{MIN}%
}^{-1}(\delta )$. Since $k_{\mathrm{MIN}}^{-1}(\delta )$ is a closed set we
have $t_{0}\in (0,T)$. By compactness the function $\theta \mapsto k(\theta
,t_{0})$ assumes the value $\delta $ for some $\theta _{0}\in \lbrack 0,2\pi
]$. Then, 
\begin{equation}
\frac{\partial k}{\partial t}(\theta _{0},t_{0})\leq 0;\ \ \frac{\partial k}{%
\partial \theta }(\theta _{0},t_{0})=0;\ \ \mathrm{and}\ \frac{\partial ^{2}k%
}{\partial \theta ^{2}}(\theta _{0},t_{0})\geq 0  \label{eqn:18}
\end{equation}%
For the first inequality observe that the function $t\mapsto k(\theta
_{0},t) $ must be nonincreasing by the left near $t_{0}$, otherwise the
definition of $t_{0}$ would be contradicted. The last two relations emerge
from the fact that $\theta _{0}$ is a minimum of the function $\theta
\mapsto k(\theta ,t_{0})$.

Finally, (\ref{eqn:18}) and $k(\theta _{0},t_{0})=\delta >0$ contradict the
assumption that $k$ satisfies (\ref{eqn:17}), as long as $\frac{a}{%
a+a^{\prime \prime }}>0$. This proves the claim and as consequence we have
that $k$ is strictly positive.

Our next step is to prove that, for each $t$, we have 
\begin{equation*}
\int_{0}^{2\pi }\frac{a(\sigma )+a^{\prime \prime }(\sigma )}{k(\sigma ,t)}%
\sin \sigma \ d\sigma =\int_{0}^{2\pi }\frac{a(\sigma )+a^{\prime \prime
}(\sigma )}{k(\sigma ,t)}\cos \sigma \ d\sigma =0
\end{equation*}%
By the hypothesis this is true for $t=0$. So it's enough to prove that the
derivatives of the functions $t\mapsto -\displaystyle\int_{0}^{2\pi }\frac{%
a(\sigma )+a^{\prime \prime }(\sigma )}{k(\sigma ,t)}\sin \sigma \ d\sigma $
and $t\mapsto \displaystyle\int_{0}^{2\pi }\frac{a(\sigma )+a^{\prime \prime
}(\sigma )}{k(\sigma ,t)}\cos \sigma \ d\sigma $ vanish identically. Using (%
\ref{eqn:17}) and integration by parts we calculate 
\begin{align*}
\frac{d}{dt}\left( -\int_{0}^{2\pi }\frac{a(\sigma )+a^{\prime \prime
}(\sigma )}{k(\sigma ,t)}\sin \sigma \ d\sigma \right) & =\int_{0}^{2\pi }%
\frac{1}{k^{2}}\frac{\partial k}{\partial t}(a+a^{\prime \prime })\sin
\sigma \ d\sigma = \\
& =\int_{0}^{2\pi }\sin \sigma \frac{\partial }{\partial \sigma }\left( a%
\frac{\partial k}{\partial \sigma }\right) +\sin \sigma \frac{\partial }{%
\partial \sigma }\left( a^{\prime }k\right) +ak\sin \sigma \ d\sigma = \\
& =a\frac{\partial k}{\partial \theta }\sin \theta \bigg|_{0}^{2\pi
}+a^{\prime }k\sin \theta \bigg|_{0}^{2\pi }-ak\cos \theta \bigg|_{0}^{2\pi
}=0
\end{align*}%
where the last equality comes from the fact that all the involved functions
are $2\pi $-periodic. We do the same for the other function and then Lemma %
\ref{lemma7} yields \textbf{(a)}. \newline

For \textbf{(b)} we calculate the time derivatives of each component using,
again, integration by parts and (\ref{eqn:17}). For the first component we
have: 
\begin{multline*}
\frac{d}{dt}\left( -\int_{0}^{\theta }\frac{a(\sigma )+a^{\prime \prime
}(\sigma )}{k(\sigma ,t)}\sin \sigma \ d\sigma -\int_{0}^{t}a(0)k(0,s)\
ds\right) = \\
=\int_{0}^{\theta }\frac{1}{k^{2}}\frac{\partial k}{\partial t}(a+a^{\prime
\prime })\sin \sigma \ d\sigma -a(0)k(0,t)= \\
=\int_{0}^{\theta }\sin \sigma \frac{\partial }{\partial \sigma }\left( a%
\frac{\partial k}{\partial \sigma }\right) +\sin \sigma \frac{\partial }{%
\partial \sigma }\left( a^{\prime }k\right) +ak\sin \sigma \ d\sigma
-a(0)k(0,t)= \\
=a\frac{\partial k}{\partial \sigma }\sin \sigma \bigg|_{0}^{\theta
}+a^{\prime }k\sin \sigma \bigg|_{0}^{\theta }-ak\cos \sigma \bigg|%
_{0}^{\theta }-a(0)k(0,t)= \\
=a(\theta )\frac{\partial k}{\partial \theta }(\theta ,t)\sin \theta
+a^{\prime }(\theta )k(\theta ,t)\sin \theta -a(\theta )k(\theta ,t)\cos
\theta
\end{multline*}%
And for the second: 
\begin{multline*}
\frac{d}{dt}\left( \int_{0}^{\theta }\frac{a(\sigma )+a^{\prime \prime
}(\sigma )}{k(\sigma ,t)}\cos \sigma \ d\sigma -\int_{0}^{t}a(0)\frac{%
\partial k}{\partial \sigma }(0,s)+a^{\prime }(0)k(0,s)\ ds\right) = \\
=-\int_{0}^{\theta }a\frac{\partial ^{2}k}{\partial \sigma ^{2}}\cos \sigma
+2a^{\prime }\frac{\partial k}{\partial \theta }\cos \sigma +(a+a^{\prime
\prime })k\cos \sigma \ d\sigma -a(0)\frac{\partial k}{\partial \theta }%
(0,t)-a^{\prime }(0)k(0,t)= \\
=-a\frac{\partial k}{\partial \sigma }\cos \sigma \bigg|_{0}^{\theta
}-a^{\prime }k\cos \sigma \bigg|_{0}^{\theta }-ak\sin \sigma \bigg|%
_{0}^{\theta }-a(0)\frac{\partial k}{\partial \theta }(0,t)-a^{\prime
}(0)k(0,t)= \\
=-a(\theta )\frac{\partial k}{\partial \theta }(\theta ,t)\cos \theta
-a^{\prime }(\theta )k(\theta ,t)\cos \theta -a(\theta )k(\theta ,t)\sin
\theta
\end{multline*}%
Therefore, 
\begin{multline*}
\frac{\partial F}{\partial t}(\theta ,t)=-k(\theta ,t).\left( a(\theta )\cos
\theta -a^{\prime }(\theta )\sin \theta ,a(\theta )\sin \theta +a^{\prime
}(\theta )\cos \theta \right) -a(\theta )\frac{\partial k}{\partial \theta }%
(\theta ,t).(-\sin \theta ,\cos \theta )= \\
=-k(\theta ,t)p(\theta )-a^{2}(\theta )\frac{\partial k}{\partial \theta }%
(\theta ,t)q(\theta )
\end{multline*}%
and this concludes the proof.
\end{proof}

By changing the space parameter one can make the tangential component vanish
while keeping the shape of the curves. For this reason Theorem \ref{teo4}
yields the desired Minkowski curvature flow stated in (\ref{eqn:7}). Notice
that it follows also that the curves remain simple and strictly convex along
the motion.

To show that the solution continues until the area enclosed by the curves
converges to zero we prove that the curvature and its derivatives remain
bounded as long as the area is bounded away from zero. Let us begin with a
Lemma that is independent of the flow.

\begin{defi}
\label{defi1} Consider a curve parameterized by the usual $\theta $ and with
Minkowski curvature $k$. We define the minkowskian median curvature $k^{\ast
}$ for the curve as the supremum of all values $x$ for which we have $%
k(\theta )>x$ on some interval of length $\pi $.
\end{defi}

\begin{lemma}
\label{lemma8} Let $\gamma :[0,2\pi ]\rightarrow \mathbb{R}^{2}$ be a curve
in the Minkowski plane which is simple, closed and convex. Denote, as usual,
the $\mathcal{Q}$-length and the enclosed area by $L_{\mathcal{Q}}$ and $A$
respectively. Then, 
\begin{equation*}
k^{\ast }\leq C\frac{L_{\mathcal{Q}}}{A}
\end{equation*}%
for some constant $C$ that doesn't depends on the curve.
\end{lemma}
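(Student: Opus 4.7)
The plan is to fix any $x<k^{\ast}$, find an interval on which $k>x$ with the required properties, and then prove directly that $x\le C\,L_{\mathcal{Q}}/A$ where $C$ depends only on $\mathcal{P}$; taking the supremum over such $x$ will give the lemma. By definition of $k^{\ast}$, there is an interval $I=(\alpha,\alpha+\pi)$ such that $k(\theta)>x$ for every $\theta\in I$. Using the relation $\lambda=[p,p']/k$ coming from (\ref{eqn:5}), together with Proposition \ref{prop1}(a), the $\mathcal{Q}$-length of the arc $\gamma(I)$ is immediately estimated:
\begin{equation*}
L_{\mathcal{Q}}(\gamma(I))=\int_{I}\frac{[p,p']}{k}\,d\theta<\frac{1}{x}\int_{0}^{2\pi}[p,p']\,d\theta=\frac{2A(\mathcal{P})}{x}.
\end{equation*}

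Next I would exploit the symmetry of $\mathcal{P}$. Since $p(\theta+\pi)=-p(\theta)$, one has $a(\theta+\pi)=a(\theta)$ and hence $q(\theta+\pi)=-q(\theta)$, using $q=e_{\theta}/a$ from (\ref{eqn:2}). Because $\gamma'=\lambda q$, the tangent vectors to $\gamma$ at the two endpoints $\gamma(\alpha)$ and $\gamma(\alpha+\pi)$ are antiparallel, so the tangent lines at those endpoints are \emph{parallel}. By convexity, the entire enclosed region $D$ then lies inside the closed strip $\Sigma$ between these two parallel tangent lines.

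Now the bound on $A$ comes from comparing $D$ with an enclosing rectangle. Let $w$ denote the Euclidean width of $\Sigma$. Because $\gamma(I)$ joins one bounding line of $\Sigma$ to the other, its Euclidean length is at least $w$; and since the $\mathcal{Q}$-norm and the Euclidean norm are equivalent with constants depending only on $a(\cdot)$, hence only on $\mathcal{P}$, we have $w\le c_{1}L_{\mathcal{Q}}(\gamma(I))\le 2c_{1}A(\mathcal{P})/x$. For the extent $D$ of the region along $\Sigma$, I would use the classical bound that the Euclidean diameter of a convex region is at most half its Euclidean perimeter; since the Euclidean perimeter of $\gamma$ is itself at most $c_{2}L_{\mathcal{Q}}$, this gives $D\le c_{2}L_{\mathcal{Q}}/2$. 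Combining width times length,
\begin{equation*}
A\le w\cdot D\le \frac{2c_{1}A(\mathcal{P})}{x}\cdot\frac{c_{2}L_{\mathcal{Q}}}{2}=\frac{C\,L_{\mathcal{Q}}}{x},
\end{equation*}
so $x\le C L_{\mathcal{Q}}/A$ with $C$ depending only on $\mathcal{P}$. Letting $x\nearrow k^{\ast}$ concludes the proof.

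The only delicate point is the switch between $\mathcal{Q}$-length and Euclidean length, which must be handled so that all comparison constants depend only on $\mathcal{P}$, not on $\gamma$; this is routine since $p$ and $q$ are smooth, $2\pi$-periodic and strictly positive, so $a,\ 1/a,\ [p,p']$ are all uniformly bounded above and below. The geometric heart of the argument — that an arc of tangent-angle length $\pi$ on which $k>x$ is squeezed into a thin strip of Euclidean width $O(1/x)$, forcing the whole convex region to sit in that thin strip — is exactly the Minkowskian analogue of Gage's median-curvature estimate, and I do not expect any serious obstruction beyond careful bookkeeping of the constants.
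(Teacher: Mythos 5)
Your proof is correct and follows essentially the same route as the paper: fix $x<k^{\ast}$, take an interval of tangent-angle length $\pi$ on which $k>x$, observe that the convex body is trapped in a narrow strip perpendicular to the mid-direction of that interval, and bound the area by strip width times half-perimeter. The only difference is a small one of bookkeeping: the paper computes the Euclidean width directly via the formula $w=\bigl|\int_{\theta_0}^{\theta_0+\pi}\sin(\theta_0-\theta)\,k_E(\theta)^{-1}\,d\theta\bigr|$ and then bounds $|\sin|\leq 1$ and $\int|\sin|\,d\theta=2$, whereas you bound $w$ from above by the Euclidean length of the arc $\gamma|_{I}$ and then by $q_0\,L_{\mathcal{Q}}(\gamma(I))<2q_0 A(\mathcal{P})/x$. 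Your version loses roughly a factor $\pi$ in the constant (since $A(\mathcal{P})\leq\pi\max[p,p']$) but is otherwise the same argument; both yield $C$ depending only on $\mathcal{P}$. One small remark: you invoke the central symmetry of $\mathcal{P}$ to get $q(\theta+\pi)=-q(\theta)$, but in the tangent-angle parameterization $q(\theta)$ is always a positive multiple of $e_\theta$, so the tangent lines at parameters $\theta$ and $\theta+\pi$ are parallel regardless of symmetry; the appeal to symmetry is harmless but not needed for the parallelism.
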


\begin{proof}
Writing as usual $\gamma ^{\prime }(\theta )=\lambda (\theta )q(\theta )$ we
have that the $\mathcal{Q}$-length is given by 
\begin{equation*}
s(\theta )=\int_{0}^{\theta }\lambda (\sigma )d\sigma
\end{equation*}%
and the euclidean length is given by 
\begin{equation*}
s_{E}(\theta )=\int_{0}^{\theta }\lambda (\sigma )|q(\sigma )|d\sigma
\end{equation*}%
where $|\cdot |$ is the euclidean norm. Put $q_{0}=\max_{[0,2\pi ]}|q(\theta
)|$. Denoting by $L$ the euclidean length of $\gamma $ is easy to see that $%
L\leq q_{0}L_{\mathcal{Q}}$. Furthermore, denoting the euclidean curvature
by $k_{E}$ we have $k_{E}(\theta )=k(\theta )\left( |q(\theta )|[p(\theta
),p^{\prime }(\theta )]\right) ^{-1}$.

If $0<B<k^{\ast }$ we can take an interval $(\theta _{0},\theta _{0}+\pi )$
in which $k>B$. Moreover, we know that the area is bounded by any usual
width times $L/2$. Then 
\begin{align*}
A& \leq \frac{L}{2}\left\vert \int_{\theta _{0}}^{\theta _{0}+\pi }\frac{%
\sin (\theta _{0}-\theta )}{k_{E}(\theta )}d\theta \right\vert =\frac{L}{2}%
\left\vert \int_{\theta _{0}}^{\theta _{0}+\pi }\frac{|q(\theta )|[p(\theta
),p^{\prime }(\theta )]\sin (\theta _{0}-\theta )}{k(\theta )}d\theta
\right\vert \leq \\
& \leq \frac{q_{0}^{2}L_{\mathcal{Q}}}{2}\max_{\theta \in \lbrack 0,2\pi
]}[p(\theta ),p^{\prime }(\theta )]\int_{\theta _{0}}^{\theta _{0}+\pi
}\left\vert \frac{\sin (\theta _{0}-\theta )}{k(\theta )}\right\vert d\theta
\leq \frac{q_{0}^{2}L_{\mathcal{Q}}}{2}\max_{\theta \in \lbrack 0,2\pi
]}[p(\theta ),p^{\prime }(\theta )]\frac{2}{B}= \\
& =q_{0}^{2}\max_{\theta \in \lbrack 0,2\pi ]}[p(\theta ),p^{\prime }(\theta
)]\frac{L_{\mathcal{Q}}}{B}
\end{align*}%
Making $B\rightarrow k^{\ast }$ and taking $C=q_{0}^{2}\max_{\theta \in
\lbrack 0,2\pi ]}[p(\theta ),p^{\prime }(\theta )]$ conclude the proof. Note
carefully that $C$ only depends on the set $\mathcal{P}$ chosen as unit ball
of our Minkowski plane.
\end{proof}

It is natural to denote by $k^{\ast }(t)$ the minkowskian median curvature
of the flow curve $\theta \mapsto F(\theta ,t)$. Notice that if the areas
enclosed by the curves are bounded from below on $[0,T)$ by some number $c>0$
then the median curvatures have an uniform upper bound on $[0,T)$.

\begin{prop}
\label{prop3} If $k^*(t)$ is bounded on $[0,T)$ then $\displaystyle%
\int_0^{2\pi}\left(a(\theta)+a^{\prime \prime }(\theta)\right)a(\theta)\log
k(\theta,t)d\theta$ is also bounded on $[0,T)$.
\end{prop}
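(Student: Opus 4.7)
My plan is to track $\Phi(t) := \int_{0}^{2\pi}[p,p']\log k(\theta,t)\,d\theta$ via its time derivative, using the PDE from Lemma~\ref{lemmapde}, and bound it above and below separately. The lower bound will follow from an elementary convex inequality for $\log$; the upper bound will require a Sobolev-type estimate driven by the median-curvature hypothesis.

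Since $[p,p']=a(a+a'')$ is $t$-independent, differentiating under the integral and substituting the PDE yields
\begin{equation*}
\Phi_{t} \;=\; \int_{0}^{2\pi}a^{2}k\,k_{\theta\theta}\,d\theta \;+\; 2\int_{0}^{2\pi}aa'\,k\,k_{\theta}\,d\theta \;+\; \int_{0}^{2\pi}[p,p']\,k^{2}\,d\theta.
\end{equation*}
Integration by parts on the first term, using $(a^{2}k)_{\theta}=2aa'k+a^{2}k_{\theta}$, cancels the middle term exactly and leaves the clean identity $\Phi_{t}=-\int_{0}^{2\pi}a^{2}k_{\theta}^{2}\,d\theta+\int_{0}^{2\pi}[p,p']k^{2}\,d\theta$. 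For the lower bound, the elementary inequality $\log k\geq 1-1/k$ (valid for $k>0$) together with $\int_{0}^{2\pi}[p,p']/k\,d\theta=L_{\mathcal{Q}}(t)$ gives $\Phi(t)\geq 2A(\mathcal{P})-L_{\mathcal{Q}}(t)$. Since Lemma~\ref{lemma2} shows $L_{\mathcal{Q}}$ is nonincreasing along the flow, $L_{\mathcal{Q}}(t)\leq L_{\mathcal{Q}}(0)$ and $\Phi$ is uniformly bounded below on $[0,T)$.

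For the upper bound, let $K$ be an upper bound for $k^{*}$ on $[0,T)$. For any $K'>K$, the definition of $k^{*}$ forces the components of $\{k(\cdot,t)>K'\}$ to have $\theta$-length strictly less than $\pi$, so $(k-K')_{+}$ vanishes at both endpoints of each component. The sharp Dirichlet--Poincar\'e inequality on intervals of length $<\pi$ then yields $\int_{0}^{2\pi}(k-K')_{+}^{2}\,d\theta\leq \int_{0}^{2\pi}k_{\theta}^{2}\,d\theta$. Combining this with $k^{2}\leq 2(K')^{2}+2(k-K')_{+}^{2}$ and weighting by $[p,p']$, one bounds $\int_{0}^{2\pi}[p,p']k^{2}\,d\theta$ by $4A(\mathcal{P})(K')^{2}$ plus a constant multiple of $\int_{0}^{2\pi}k_{\theta}^{2}\,d\theta$. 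Plugging into the ODE for $\Phi$ gives
\begin{equation*}
\Phi_{t}\;\leq\; C_{1}K^{2} \;+\; C_{2}\int_{0}^{2\pi}k_{\theta}^{2}\,d\theta \;-\; \int_{0}^{2\pi}a^{2}k_{\theta}^{2}\,d\theta,
\end{equation*}
with $C_{1},C_{2}$ depending only on $\mathcal{P}$. A Gronwall argument then shows $\Phi(t)\leq \Phi(0)+CT$, finite on $[0,T)$, which combined with the lower bound proves the proposition. The main obstacle is arranging the constants in the Sobolev step so that the positive $\int k_{\theta}^{2}$ contribution is strictly dominated by $-\int a^{2}k_{\theta}^{2}$ (up to a bounded $K$-dependent remainder); this hinges on a weighted Wirtinger-type inequality adapted to the pair of weights $[p,p']$ and $a^{2}$, rather than the crude pointwise comparisons used above, and uses essentially the small-components property coming from the bounded median curvature.
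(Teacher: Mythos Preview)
Your derivative identity $\Phi_t = \int_0^{2\pi}[p,p']k^2\,d\theta - \int_0^{2\pi}a^2 k_\theta^2\,d\theta$ is correct and coincides (after one integration by parts) with the paper's form $\int_0^{2\pi}\big((ak)^2-((ak)_\theta)^2\big)\,d\theta$. Your lower bound is also fine; in fact $k\geq k_{\mathrm{MIN}}(0)$ from the proof of Theorem~\ref{teo4} gives it immediately.

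The upper bound, however, has a real gap, which you yourself flag but do not close. Your Wirtinger step on $(k-K')_+$ yields a term $C_2\int k_\theta^2$ with $C_2$ of order $\max_\theta[p,p']=\max_\theta a(a+a'')$, and there is no reason this is dominated by $\int a^2 k_\theta^2$: generically $2\max a(a+a'')>\min a^2$. Nothing on the right-hand side involves $\Phi$, so there is nothing to ``Gronwall'' against; you would need the right side bounded, which is precisely what is missing. The vague appeal to a ``weighted Wirtinger-type inequality adapted to the pair of weights $[p,p']$ and $a^2$'' is the whole content of the estimate and cannot be left as a remark.

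The paper's fix is simple and instructive: on each component $I_j$ of $\{k>k^*\}$ apply Wirtinger not to $(k-K')_+$ but to $\theta\mapsto a(\theta)\big(k(\theta,t)-k^*(t)\big)$, which also vanishes at $\partial I_j$. Building the weight $a$ into the test function makes Wirtinger compare $\int(ak)^2$ with $\int((ak)_\theta)^2$ \emph{with coefficient exactly $1$}, so the two leading terms of $\Phi_t$ cancel and the constant mismatch disappears. The cross terms that emerge are $2k^*\int_0^{2\pi}a(a+a'')k\,d\theta$ plus quantities bounded by $C\,(k^*)^2$. The first of these is not bounded pointwise in $t$, but it equals $-2k^*\,\dfrac{dL_{\mathcal Q}}{dt}$, hence is integrable on $[0,T)$. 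Integration then gives $\Phi(t)\leq \Phi(0)+2M L_{\mathcal Q}(0)+C M^2 T$ with $M=\sup_t k^*(t)$.
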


\begin{proof}
First, adopting an easier notation we calculate 
\begin{align*}
\frac{d}{dt}\left( \int_{0}^{2\pi }a(a+a^{\prime \prime })\log k\ d\theta
\right) & =\int_{0}^{2\pi }\frac{a(a+a^{\prime \prime })}{k}\frac{\partial k%
}{\partial t}d\theta = \\
& =\int_{0}^{2\pi }a^{2}k\frac{\partial ^{2}k}{\partial \theta ^{2}}%
+2aa^{\prime }k\frac{\partial k}{\partial \theta }+a\left( a+a^{\prime
\prime }\right) k^{2}d\theta = \\
& =\int_{0}^{2\pi }-\left( a\frac{\partial k}{\partial \theta }\right)
^{2}+(ak)^{2}+aa^{\prime \prime }k^{2}\ d\theta = \\
& =\int_{0}^{2\pi }(ak)^{2}-\left( \frac{\partial (ak)}{\partial \theta }%
\right) ^{2}d\theta 
\end{align*}%
here we used integration by parts and the evolution equation. A version of
the Wirtinger's inequality states that if $f:[a,b]\rightarrow \mathbb{R}$ is
a $C^{1}$ function such that $b-a\leq \pi $ and $f(a)=f(b)=0$ then 
\begin{equation*}
\int_{a}^{b}f^{2}dx\leq \int_{a}^{b}\left( f^{\prime }\right) ^{2}dx
\end{equation*}%
and we will use this result to estimate the above integral. Fix $t$ and
consider the set $A\subseteq \lbrack 0,2\pi ]$ given by $A=\{\theta \in
\lbrack 0,2\pi ]\mid k(\theta ,t)>k^{\ast }(t)\}$. By the definition of $%
k^{\ast }$ we note that $A$ is an at most countable union of disjoint
intervals $I_{j}$ with $|I_{j}|\leq \pi $ for each $j$ and such that $%
k(\theta ,t)=k^{\ast }(t)$ on its endpoints. So, applying the Wirtinger's
inequality to the restriction of the function $\theta \mapsto a(\theta
)k(\theta ,t)-a(\theta )k^{\ast }(\theta )$ to an interval $I_{j}$ yields 
\begin{equation*}
\int_{I_{j}}(ak)^{2}-2a^{2}kk^{\ast }+(ak^{\ast })^{2}d\theta \leq
\int_{I_{j}}\left( \frac{\partial (ak)}{\partial \theta }\right)
^{2}-2a^{\prime }k^{\ast }\frac{\partial (ak)}{\partial \theta }+(a^{\prime
}k^{\ast })^{2}d\theta 
\end{equation*}%
And then, 
\begin{equation*}
\int_{I_{j}}(ak)^{2}-\left( \frac{\partial (ak)}{\partial \theta }\right)
^{2}d\theta \leq 2k^{\ast }(t)\int_{I_{j}}a\left( a+a^{\prime \prime
}\right) kd\theta -k^{\ast }(t)^{2}\int_{I_{j}}\left( a^{\prime }\right)
^{2}+2aa^{\prime \prime }+a^{2}\ d\theta 
\end{equation*}%
Summating over $j$ yields the following estimate on $A$: 
\begin{align*}
\int_{A}(ak)^{2}-\left( \frac{\partial (ak)}{\partial \theta }\right)
^{2}d\theta & \leq 2k^{\ast }(t)\int_{0}^{2\pi }a\left( a+a^{\prime \prime
}\right) kd\theta -k^{\ast }(t)^{2}\int_{A}\left( a^{\prime }\right)
^{2}+2aa^{\prime \prime }+a^{2}\ d\theta = \\
& =-2k^{\ast }(t)\frac{dL_{\mathcal{Q}}}{dt}-k^{\ast
}(t)^{2}\int_{A}\left( a^{\prime }\right) ^{2}+2aa^{\prime \prime }+a^{2}\
d\theta \leq  \\
& \leq -2k^{\ast }(t)\frac{dL_{\mathcal{Q}}}{dt}+2\pi
k^{\ast }(t)^{2}\max_{[0,2\pi ]}\left\vert \left( a^{\prime }\right)
^{2}+2aa^{\prime \prime }+a^{2}\right\vert 
\end{align*}%
On $[0,2\pi ]-A$ we have the estimate 
\begin{equation*}
\int_{\lbrack 0,2\pi ]-A}(ak)^{2}-\left( \frac{\partial (ak)}{\partial
\theta }\right) ^{2}d\theta \leq \int_{\lbrack 0,2\pi ]-A}(ak)^{2}d\theta
\leq 2\pi k^{\ast }(t)^{2}\max_{[0,2\pi ]}a^{2}
\end{equation*}%
Suppose that $M>0$ is an upper bound for $k^{\ast }(t)$ on $[0,T)$. Then,
the above estimates yields 
\begin{equation*}
\frac{d}{dt}\left( \int_{0}^{2\pi }a(a+a^{\prime \prime })\log k\ d\theta
\right) \leq -2M\frac{dL_{\mathcal{Q}}}{dt}+2\pi M^{2}C_{0}
\end{equation*}%
For some constant $C_{0}>0$ that only depends on the unit $\mathcal{P}$-ball
chosen. Let $\displaystyle\int_{0}^{2\pi }a(a+a^{\prime \prime })\log k\
d\theta =C_{1}$ for $t=0$. We write 
\begin{align*}
\int_{0}^{2\pi }a(\theta )(a(\theta )+a^{\prime \prime }(\theta ))\log
k(\theta ,t)\ d\theta & =C_{1}+\int_{0}^{t}\left( \frac{d}{dt}\left(
\int_{0}^{2\pi }a(a+a^{\prime \prime })\log k\ d\theta \right) \right)
dt\leq  \\
& \leq C_{1}+\int_{0}^{t}-2M\frac{dL_{\mathcal{Q}}}{dt}+2\pi M^{2}C_{0}\
dt\leq  \\
& \leq C_{1}-2M\left( L_{\mathcal{Q}}(t)-L_{\mathcal{Q}}(0)\right) +2\pi
M^{2}C_{0}T\leq  \\
& \leq C_{1}+2ML_{\mathcal{Q}}(0)+2\pi M^{2}C_{0}T
\end{align*}%
and this completes the proof since the right side does not depends on $t$.
\end{proof}

\begin{lemma}
\label{lemma9} If $\displaystyle\int_0^{2\pi}a(\theta)(a(\theta)+a^{\prime
\prime }(\theta))\log k(\theta,t) \ d\theta$ is bounded on $[0,T)$, then for
any $\delta > 0$ there exists a constant $C$ such that if $k(\theta,t) > C$
on an interval $J$ (varying the parameter $\theta$) then we have necessarily 
$|J| \leq \delta$.
\end{lemma}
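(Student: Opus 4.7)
The plan is to view the hypothesis as a uniform $L^1$--type bound on $\log k$ with a positive weight, and then exploit a Markov--type argument: a large value of $k$ on a long interval would force the weighted integral of $\log k$ to be arbitrarily large, contradicting the uniform bound.

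First I would set up the two elementary positivity ingredients. The weight $w(\theta):=a(\theta)\bigl(a(\theta)+a''(\theta)\bigr)$ is continuous and strictly positive on the circle (by the smoothness and strict convexity of $\mathcal{P}$), so
\[
m := \min_{\theta\in[0,2\pi]} w(\theta) > 0,\qquad
W := \max_{\theta\in[0,2\pi]} w(\theta) < \infty.
\]
Next, recall from the proof of Theorem \ref{teo4} that $k(\theta,t)\ge k_{\mathrm{MIN}}(0)>0$ along the flow, so setting $\kappa := \min\bigl(k_{\mathrm{MIN}}(0),1\bigr)$ we have $\log k(\theta,t)\ge \log \kappa$ pointwise, with $\log\kappa\le 0$.

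Now suppose $\bigl|\int_0^{2\pi} w(\theta)\log k(\theta,t)\,d\theta\bigr|\le \mathcal{M}$ on $[0,T)$ and that $k(\theta,t)>C$ on some interval $J\subseteq [0,2\pi]$, where we may take $C>1$. Split the integral over $J$ and its complement $J^c$. On $J$, since $w\ge m$ and $\log k>\log C>0$,
\[
\int_J w(\theta)\log k(\theta,t)\,d\theta \;\ge\; m\,|J|\log C.
\]
On $J^c$, using $\log k \ge \log\kappa$ and $w\le W$ (the sign of $\log\kappa$ is nonpositive, so upper-bounding $w$ gives the correct direction),
\[
\int_{J^c} w(\theta)\log k(\theta,t)\,d\theta \;\ge\; 2\pi W\log\kappa.
\]
Adding the two estimates,
\[
\mathcal{M} \;\ge\; \int_0^{2\pi} w\log k\,d\theta \;\ge\; m\,|J|\log C + 2\pi W\log\kappa,
\]
which rearranges to
\[
|J|\;\le\;\frac{\mathcal{M}-2\pi W\log\kappa}{m\log C}.
\]
The numerator is a constant depending only on $\mathcal{P}$ and the initial datum, while the denominator grows with $C$. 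Given $\delta>0$, we therefore choose $C$ large enough that this fraction is at most $\delta$, which yields the lemma.

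The argument is essentially mechanical once the setup is right; the only thing that requires any care is the lower tail of $\log k$ on $J^c$, and that is handled by the a priori lower bound $k\ge k_{\mathrm{MIN}}(0)$ inherited from Theorem \ref{teo4}. I do not foresee any real obstacle here.
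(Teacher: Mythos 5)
Your proof is correct and follows essentially the same strategy as the paper: split the integral over $J$ and its complement, use $\log k > \log C$ on $J$ and the a priori lower bound $k\ge k_{\mathrm{MIN}}(0)$ off $J$, and derive a bound on $|J|$ that shrinks as $C$ grows. Your version is in fact a bit cleaner — by introducing $\kappa=\min(k_{\mathrm{MIN}}(0),1)$ you avoid the paper's split into the cases $k_{\mathrm{MIN}}(0)\le 1$ and $k_{\mathrm{MIN}}(0)>1$, and your choice of $m$ and $W$ in each term makes the direction of every inequality transparent (whereas the paper's displayed inequalities mix $\min$ and $\max$ in a way that is at best loosely justified). The one place worth spelling out a bit more is the bound $\int_{J^c} w\log k\ge 2\pi W\log\kappa$: this requires noting that $w\log k\ge W\log\kappa$ pointwise (using $w\log k\ge 0\ge W\log\kappa$ where $\log k\ge 0$ and $w\log k\ge W\log k\ge W\log\kappa$ where $\log k<0$), after which integrating over a set of measure at most $2\pi$ against the nonpositive constant $W\log\kappa$ gives the claim.
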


\begin{proof}
Fix $\delta >0$ and take $[b,c]\subseteq \lbrack 0,2\pi ]$ with length
greater than $\delta $. Suppose that $k(\theta ,t)>C$ on $[b,c]$ for some $t$%
. Remembering that $k_{\mathrm{MIN}}(0)$ is a lower bound for $k(\theta ,t)$
we have 
\begin{multline*}
\int_{0}^{2\pi }a(a+a^{\prime \prime })\log k(\theta ,t)d\theta = \\
=\int_{0}^{b}a(a+a^{\prime \prime })\log k(\theta ,t)d\theta
+\int_{b}^{c}a(a+a^{\prime \prime })\log k(\theta ,t)d\theta +\int_{c}^{2\pi
}a(a+a^{\prime \prime })\log k(\theta ,t)d\theta \geq \\
\geq \log \left( k_{\mathrm{MIN}}(0)\right) \int_{0}^{b}a(a+a^{\prime \prime
})\ d\theta +\delta \log C\int_{0}^{2\pi }a(a+a^{\prime \prime })d\theta
+\log \left( k_{\mathrm{MIN}}(0)\right) \int_{c}^{2\pi }a(a+a^{\prime \prime
})\ d\theta
\end{multline*}%
If $k_{\mathrm{MIN}}(0)\leq 1$, then 
\begin{multline*}
\int_{0}^{2\pi }a(a+a^{\prime \prime })\log k(\theta ,t)d\theta \geq \left[
\delta \log C+(2\pi +b-c)\log \left( k_{\mathrm{MIN}}(0)\right) \right]
\max_{[0,2\pi ]}a(a+a^{\prime \prime })\geq \\
\geq \left[ \delta \log C+(2\pi -\delta )\log \left( k_{\mathrm{MIN}%
}(0)\right) \right] \max_{[0,2\pi ]}a(a+a^{\prime \prime })
\end{multline*}%
Otherwise we have 
\begin{multline*}
\int_{0}^{2\pi }a(a+a^{\prime \prime })\log k(\theta ,t)d\theta \geq \\
\geq \delta \log C\int_{0}^{2\pi }a(a+a^{\prime \prime })d\theta +(2\pi
+b-c)\log \left( k_{\mathrm{MIN}}(0)\right) \min_{[0,2\pi ]}a(a+a^{\prime
\prime })\geq \\
\geq \left( \delta \log C\right) \max_{[0,2\pi ]}a(a+a^{\prime \prime })
\end{multline*}%
Both cases are contradictions when $C$ is sufficiently large since the left
side is bounded on $[0,T)$. This proves the result.
\end{proof}

\begin{lemma}
\label{lemma10} The function $t\mapsto \displaystyle\int_{0}^{2\pi }\left(
a(\theta )k(\theta ,t)\right) ^{2}-\left( \frac{\partial }{\partial \theta }%
(a(\theta )k(\theta ,t))\right) ^{2}d\theta $ is nondecreasing. In
par\-ti\-cu\-lar, one can find a constant $N\geq 0$ such that the inequality 
\begin{equation*}
\int_{0}^{2\pi }\left( \frac{\partial (ak)}{\partial \theta }\right)
^{2}d\theta \leq \int_{0}^{2\pi }(ak)^{2}d\theta +N
\end{equation*}%
holds on $[0,T)$.
\end{lemma}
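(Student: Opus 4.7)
The plan is to differentiate $I(t):=\int_0^{2\pi}\bigl[(ak)^2-((ak)')^2\bigr]d\theta$ directly and, after one integration by parts, express $I'(t)$ as a manifestly non-negative integral by exploiting a convenient rewriting of the evolution PDE. Since $a=a(\theta)$ has no time dependence, differentiation under the integral sign gives
\begin{equation*}
I'(t)=2\int_0^{2\pi}\bigl[(ak)\,ak_t-(ak)'\,(ak_t)'\bigr]\,d\theta.
\end{equation*}
The boundary terms in the integration by parts vanish by $2\pi$-periodicity, yielding
\begin{equation*}
I'(t)=2\int_0^{2\pi}ak_t\,\bigl[ak+(ak)''\bigr]\,d\theta.
\end{equation*}

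The key algebraic step, which I expect to be the only non-routine one, is to notice that Lemma \ref{lemmapde} can be recast as
\begin{equation*}
k_t=\frac{k^2}{a+a''}\bigl[ak+(ak)''\bigr].
\end{equation*}
Indeed, expanding $(ak)''=a''k+2a'k'+ak''$ gives $ak+(ak)''=(a+a'')k+2a'k'+ak''$, and multiplying the PDE by $(a+a'')/k^2$ recovers exactly this bracket. Substituting back, I obtain
\begin{equation*}
I'(t)=2\int_0^{2\pi}\frac{ak^2}{a+a''}\bigl[ak+(ak)''\bigr]^2\,d\theta\ \geq\ 0,
\end{equation*}
since $a>0$, $k>0$ and $a+a''>0$ by the standing hypotheses on $\mathcal{P}$ and on the flow (positivity of $k$ was established in Theorem \ref{teo4}).

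Thus $I(t)$ is nondecreasing on $[0,T)$, so $I(t)\geq I(0)$ for every $t\in[0,T)$. Setting $N:=\max\{0,-I(0)\}$ (a finite constant depending only on the initial curve and on $a$) gives $\int_0^{2\pi}((ak)')^2\,d\theta-\int_0^{2\pi}(ak)^2\,d\theta\leq N$, which is the claimed inequality. The main obstacle is really just spotting the rewriting of the PDE; once this is in hand, the monotonicity and the bound follow immediately.
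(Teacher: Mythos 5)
Your proof is correct and follows essentially the same route as the paper: differentiate $I(t)$ under the integral sign, integrate by parts once (the $2\pi$-periodicity kills the boundary terms) to obtain $I'(t)=2\int_0^{2\pi}ak_t\bigl[ak+(ak)''\bigr]\,d\theta$, and then use the evolution PDE to recognize the bracket as $\frac{a+a''}{k^2}k_t$, producing a manifestly non-negative integrand. The paper writes the final integrand as $\frac{a(a+a'')}{k^2}k_t^2$ while you write it as $\frac{ak^2}{a+a''}\bigl[ak+(ak)''\bigr]^2$; these are algebraically identical. Your explicit choice $N=\max\{0,-I(0)\}$ is a small improvement in precision over the paper's loosely worded closing sentence.
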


\begin{proof}
We compute 
\begin{multline*}
\frac{d}{dt}\left( \int_{0}^{2\pi }\left( a(\theta )k(\theta ,t)\right)
^{2}-\left( \frac{\partial }{\partial \theta }(a(\theta )k(\theta
,t))\right) ^{2}d\theta \right) =\int_{0}^{2\pi }2a^{2}k\frac{\partial k}{%
\partial t}-2\frac{\partial (ak)}{\partial \theta }\frac{\partial ^{2}(ak)}{%
\partial \theta \partial t}\ d\theta = \\
=\int_{0}^{2\pi }2a^{2}k\frac{\partial k}{\partial t}+2a\frac{\partial
^{2}(ak)}{\partial \theta ^{2}}\frac{\partial k}{\partial t}d\theta
=\int_{0}^{2\pi }2a\frac{\partial k}{\partial t}\left( ak+\frac{\partial
^{2}(ak)}{\partial \theta ^{2}}\right) d\theta = \\
=\int_{0}^{2\pi }2a\frac{\partial k}{\partial t}\left( ak+a^{\prime \prime
}k+2a^{\prime }\frac{\partial k}{\partial \theta }+a\frac{\partial ^{2}k}{%
\partial \theta ^{2}}\right) \ d\theta =2\int_{0}^{2\pi }\frac{a(a+a^{\prime
\prime })}{k^{2}}\left( \frac{\partial k}{\partial t}\right) ^{2}d\theta
\geq 0
\end{multline*}%
\newline
and this proves the first claim. To find $N\geq 0$ with the desired property
it is enough to take any positive number greater then the value of the
function when $t=0$.
\end{proof}

\begin{prop}
\label{prop4} If $\displaystyle\int_0^{2\pi}a(\theta)(a(\theta)+a^{\prime
\prime }(\theta))\log k(\theta,t) \ d\theta$ is bounded on $[0,T)$, then $%
k(\theta,t)$ has an upper bound on $S^1\times [0,T)$.
\end{prop}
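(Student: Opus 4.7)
The plan is to argue by contradiction. I would set $f(\theta,t) := a(\theta)k(\theta,t)$; since $a$ is smooth and bounded below by some $a_{\min}>0$ on $S^1$, a bound on $f$ is equivalent to a bound on $k$, so suppose for contradiction that $M(t) := \max_\theta f(\cdot,t)$ is unbounded on $[0,T)$, and pick $(\theta_n,t_n)$ with $M_n := f(\theta_n,t_n) \to \infty$. The strategy is to pit the Dirichlet-energy bound of Lemma \ref{lemma10} against the interval-length restriction of Lemma \ref{lemma9}, both applied to $f$.

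The first step is a ``peak concentration'' lower bound on the Dirichlet energy. Let $J_n = (\alpha_n,\beta_n)$ be the connected component of $\{\theta : f(\theta,t_n) > M_n/2\}$ containing $\theta_n$, with length $L_n = \beta_n - \alpha_n$. From $M_n/2 = f(\theta_n,t_n) - f(\alpha_n,t_n) = \int_{\alpha_n}^{\theta_n}\partial_\theta f\,d\sigma$, Cauchy--Schwarz gives $\int_{\alpha_n}^{\theta_n}(\partial_\theta f)^2\, d\sigma \geq (M_n/2)^2/(\theta_n-\alpha_n)$, with the analogous inequality on $(\theta_n,\beta_n)$. Applying the AM--HM inequality to the two subinterval lengths, one would obtain
\[ \int_{J_n}(\partial_\theta f)^2\, d\theta \;\geq\; \frac{M_n^2}{L_n}. \]

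Next, Lemma \ref{lemma10} combined with the trivial bound $\int_0^{2\pi} f^2\,d\theta \leq 2\pi M_n^2$ yields $\int_0^{2\pi}(\partial_\theta f)^2\,d\theta \leq 2\pi M_n^2 + N$. Chaining this with the estimate from the previous paragraph gives $L_n \geq M_n^2/(2\pi M_n^2 + N)$, which tends to $1/(2\pi)$ as $n\to\infty$.

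To close the argument, I would invoke Lemma \ref{lemma9}: on $J_n$ we have $k \geq M_n/(2\max a) \to \infty$, so fixing any $\delta < 1/(2\pi)$ (say $\delta = 1/(4\pi)$), the lemma produces a constant $C_\delta$; for $n$ large enough $k > C_\delta$ throughout $J_n$, forcing $L_n \leq \delta < 1/(2\pi)$, which contradicts the lower bound derived above. The only delicate step is the peak inequality $\int_{J_n}(\partial_\theta f)^2 \geq M_n^2/L_n$; once that is in place, everything else is a matter of matching constants.
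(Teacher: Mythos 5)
Your proof is correct and takes a genuinely different route from the paper's, although both rest on the same two pillars, Lemmas \ref{lemma9} and \ref{lemma10}. The paper argues directly: fix $\delta<a_0^2/(2\pi a_1^2)$, use Lemma \ref{lemma9} to find a point $b$ within distance $\delta$ of a maximizer $\theta_0$ with $k(b,t)\leq C$, write $a(\theta_0)k_{\mathrm{MAX}}(t)=a(b)k(b,t)+\int_b^{\theta_0}\partial_\theta(ak)\,d\theta$, estimate the integral by H\"older together with Lemma \ref{lemma10} (which reintroduces $k_{\mathrm{MAX}}$ on the right-hand side with coefficient $a_1\sqrt{2\pi\delta}/a_0$), and choose $\delta$ small enough that this coefficient is $<1$ so $k_{\mathrm{MAX}}$ can be solved for. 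You instead argue by contradiction with a ``peak concentration'' estimate: Cauchy--Schwarz plus AM--HM on the super-level set $J_n=\{f>M_n/2\}$ gives $\int_{J_n}(\partial_\theta f)^2\geq M_n^2/L_n$, which pitted against the Lemma \ref{lemma10} bound $\int_0^{2\pi}(\partial_\theta f)^2\leq 2\pi M_n^2+N$ forces $L_n$ to approach $1/(2\pi)$ from below, while Lemma \ref{lemma9} with a fixed $\delta<1/(2\pi)$ forces $L_n\leq\delta$ once $M_n$ is large. The paper's version is a tighter, single-pass estimate; yours is more modular and makes the mechanism transparent --- Lemma \ref{lemma10} says the graph of $ak$ cannot be too steep relative to its height, Lemma \ref{lemma9} says it cannot stay tall over a wide interval, and a tall spike violates one or the other. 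Two small points you should make explicit if writing this up: you need $\theta_n$ to be an actual maximizer of $f(\cdot,t_n)$ so that $\int_0^{2\pi}f^2\,d\theta\leq 2\pi M_n^2$, and the degenerate case $J_n=S^1$ (where $f$ never drops to $M_n/2$ and the Cauchy--Schwarz step has no endpoints) must be dismissed separately, which Lemma \ref{lemma9} does immediately for large $n$.
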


\begin{proof}
We shall find an upper bound for the function $t\mapsto k_{\mathrm{MAX}}(t)$%
. Fix $t\in \lbrack 0,T)$ and let $\theta _{0}\in \lbrack 0,2\pi ]$ such
that $k(\theta _{0},t)=k_{\mathrm{MAX}}(t)$. Denote $\min_{[0,2\pi ]}a=a_{0}$
and $\max_{[0,2\pi ]}a=a_{1}$, choose $0<\delta <\displaystyle\frac{a_{0}^{2}%
}{2\pi a_{1}^{2}}$ and let $C$ be as in Lemma \ref{lemma9}. Therefore, we
can take $b\in \lbrack 0,2\pi ]$ such that $k(b,t)\leq C$ and $0<|b-\theta
_{0}|\leq \delta $. Changing the parameter if necessary we can assume $%
b<\theta _{0}$. Moreover, let $N>0$ be as in Lemma \ref{lemma10}. Using the
Holder's inequality we calculate 
\begin{multline*}
k_{\mathrm{MAX}}(t)=\frac{1}{a(\theta _{0})}a(\theta _{0})k(\theta _{0},t)=%
\frac{1}{a(\theta _{0})}a(b)k(b,t)+\frac{1}{a(\theta _{0})}\int_{b}^{\theta
_{0}}\frac{\partial (ak)}{\partial \theta }d\theta \leq \\
\leq \frac{Ca(b)}{a(\theta _{0})}+\frac{\sqrt{\delta }}{a(\theta _{0})}%
\left( \int_{b}^{\theta _{0}}\left( \frac{\partial (ak)}{\partial \theta }%
\right) ^{2}d\theta \right) ^{1/2}\leq \frac{Ca(b)}{a(\theta _{0})}+\frac{%
\sqrt{\delta }}{a(\theta _{0})}\left( \int_{0}^{2\pi }(ak)^{2}d\theta
+N\right) ^{1/2}\leq \\
\leq \frac{Ca(b)}{a(\theta _{0})}+\frac{\sqrt{\delta }}{a(\theta _{0})}\sqrt{%
2\pi }a_{1}k_{\mathrm{MAX}}(t)+\frac{\sqrt{\delta N}}{a(\theta _{0})}\leq 
\frac{Ca_{1}}{a_{0}}+\frac{a_{1}\sqrt{2\pi \delta }}{a_{0}}k_{\mathrm{MAX}%
}(t)+\frac{\sqrt{\delta N}}{a_{0}}
\end{multline*}%
Then we have 
\begin{equation*}
k_{\mathrm{MAX}}(t)\left( 1-\frac{a_{1}}{a_{0}}\sqrt{2\pi \delta }\right)
\leq \frac{Ca_{1}+\sqrt{\delta N}}{a_{0}}
\end{equation*}%
And, finally, by the assumption on $\delta $, 
\begin{equation*}
k_{\mathrm{MAX}}(t)\leq \frac{Ca_{1}+\sqrt{\delta N}}{a_{0}-a_{1}\sqrt{2\pi
\delta }}
\end{equation*}%
Since the right side does not depends on $t$ we have the desired.
\end{proof}

Combining these lemmas and propositions yields immediately the following
theorem:

\begin{teo}
\label{teo5} Let $A(t)$ denote the area enclosed by the curve $\theta
\mapsto k(\theta ,t)$. If $A(t)$ admits a strictly positive lower bound on $%
[0,T)$, then $k(\theta ,t)$ is uniformly bounded on $S^{1}\times \lbrack
0,T) $.
\end{teo}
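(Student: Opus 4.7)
The strategy is to chain together the four preceding results to move from the area lower bound to the desired uniform bound on $k$. First, I would observe that the hypothesis $A(t)\geq c>0$ on $[0,T)$, together with Lemma \ref{lemma2} which gives $\frac{dL_{\mathcal{Q}}}{dt}=-\int k^{2}\,ds\leq 0$, implies that $L_{\mathcal{Q}}(t)\leq L_{\mathcal{Q}}(0)$ throughout. Consequently the isoperimetric quotient $L_{\mathcal{Q}}(t)/A(t)$ is uniformly bounded above by the constant $L_{\mathcal{Q}}(0)/c$.

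Next, I apply Lemma \ref{lemma8}, which asserts that the minkowskian median curvature satisfies $k^{\ast}(t)\leq C\,L_{\mathcal{Q}}(t)/A(t)$ with a constant $C$ depending only on the unit $\mathcal{P}$-ball. The previous step then gives $k^{\ast}(t)\leq C\,L_{\mathcal{Q}}(0)/c$, so $k^{\ast}(t)$ is uniformly bounded on $[0,T)$.

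With $k^{\ast}(t)$ bounded, Proposition \ref{prop3} guarantees that the integral $\int_{0}^{2\pi}a(\theta)\bigl(a(\theta)+a^{\prime\prime}(\theta)\bigr)\log k(\theta,t)\,d\theta$ remains bounded on $[0,T)$. Finally, I invoke Proposition \ref{prop4}, which upgrades this integral bound into a uniform pointwise bound $k_{\mathrm{MAX}}(t)\leq M$ for some $M$ independent of $t$. This yields the claim $k(\theta,t)\leq M$ throughout $S^{1}\times[0,T)$.

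No genuine obstacle arises: the proof is a direct concatenation of previously established estimates. The only point that warrants a moment of care is tracking that every constant produced along the way depends solely on the data $\mathcal{P}$, the initial curve (through $k_{\mathrm{MIN}}(0)$, $L_{\mathcal{Q}}(0)$, and the integral $\int a(a+a^{\prime\prime})\log k(\theta,0)\,d\theta$), the lower bound $c$, and the lifespan $T$ itself, so that the bound on $k_{\mathrm{MAX}}(t)$ is genuinely uniform in $t\in[0,T)$ and does not degenerate as $t\to T^{-}$.
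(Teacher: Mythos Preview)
Your proposal is correct and follows exactly the approach of the paper, which merely states that the theorem follows immediately by combining the preceding lemmas and propositions. You have in fact spelled out the chain of implications (area bound $\Rightarrow$ bound on $L_{\mathcal{Q}}/A$ via Lemma~\ref{lemma2} $\Rightarrow$ bound on $k^{\ast}$ via Lemma~\ref{lemma8} $\Rightarrow$ bound on $\int a(a+a'')\log k$ via Proposition~\ref{prop3} $\Rightarrow$ uniform bound on $k$ via Proposition~\ref{prop4}) more explicitly than the paper itself does.
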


We now turn our attention to prove that the derivatives of $k$ remain
bounded as long as $k$ is bounded.

\begin{prop}
\label{prop5} If $k$ is bounded on $S^1\times [0,T)$, then $\displaystyle%
\frac{\partial k}{\partial\theta}$ is also bounded on $S^1\times [0,T)$.
\end{prop}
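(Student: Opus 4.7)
The strategy, following the Gage--Hamilton template for the Euclidean curve-shortening flow, is to prove a pointwise upper bound on $|k_\theta|$ by a maximum-principle argument on an auxiliary function involving $k_\theta^2$ and $k^2$. The minimum-principle argument employed in the proof of Theorem \ref{teo4} already provides a uniform positive lower bound $k \geq k_{\mathrm{MIN}}(0) > 0$ on $S^1 \times [0,T)$; together with the hypothesized upper bound $k \leq M$ and the smoothness and strict positivity of $\frac{a}{a+a''}$, this makes the PDE of Lemma \ref{lemmapde} uniformly parabolic on $S^1 \times [0,T)$ with smooth bounded coefficients.

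The first step is to differentiate the evolution equation of Lemma \ref{lemmapde} with respect to $\theta$ to obtain a parabolic PDE for $\psi := k_\theta$ whose principal part remains $\frac{a}{a+a''}\,k^2\,\psi_{\theta\theta}$. The second step is to introduce the auxiliary function
$$\phi(\theta,t) := k_\theta^2(\theta,t) + \lambda\,k^2(\theta,t), \qquad \lambda > 1 \text{ a large constant, to be chosen},$$
and to analyze its spatial maximum for each fixed $t$. If the maximum is attained at a point where $k_\theta = 0$, then trivially $\phi \leq \lambda M^2$; otherwise the conditions $\phi_\theta = 0$ and $\phi_{\theta\theta} \leq 0$ give respectively $k_{\theta\theta} = -\lambda k$ and $k_\theta\, k_{\theta\theta\theta} \leq -\lambda k_\theta^2$. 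Substituting these two relations into $\phi_t = 2k_\theta (k_t)_\theta + 2\lambda k\, k_t$, computed via the PDE of Lemma \ref{lemmapde} and its $\theta$-derivative, one obtains an upper bound whose dominant terms (for $\lambda$ large) are $-2\lambda^2 \frac{a}{a+a''} k^4$ and $-6\lambda \frac{a}{a+a''} k^2 k_\theta^2$, both strongly negative because $\frac{a}{a+a''}$ is bounded below.

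The remaining cross terms consist of a linear-in-$k_\theta$ piece coming from $(\frac{a}{a+a''})'$, which is easily absorbed by Young's inequality into the two negative contributions above, and a Minkowski-specific cubic term $4\,\frac{2a'}{a+a''}\, k\, k_\theta^3$ coming from differentiating the convective part $\frac{2a'}{a+a''}\,k^2 k_\theta$. The latter has no Euclidean analogue (there $a \equiv 1$ makes this coefficient vanish), and handling it is the main technical obstacle I anticipate. My proposed resolution is to replace the naive $\phi$ by its $\mathcal{P}$-adapted version
$$\widetilde{\phi} := \bigl((ak)_\theta\bigr)^2 + \lambda\,(ak)^2,$$
motivated by the $\mathcal{P}$-dependent monotone quantity of Lemma \ref{lemma10}: in the max-principle computation for $\widetilde{\phi}$, the additional $a$-derivatives produced on the diagonal should cancel the troublesome $a'$-term.

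Once an inequality of the form $\phi_t \leq C_1 \phi + C_2$ is established at each spatial maximum (with constants depending only on $m$, $M$, and the $\mathcal{P}$-data), Gr\"{o}nwall's lemma gives $\max_\theta \phi(\cdot,t) \leq (\phi(\cdot,0) + C_2/C_1)\,e^{C_1 T}$ on $[0,T)$, yielding the desired uniform bound on $|k_\theta|$. As a fallback, since the PDE of Lemma \ref{lemmapde} is uniformly parabolic with smooth bounded coefficients (as observed above), standard interior Krylov--Safonov and Schauder estimates give $k \in C^{1+\alpha,(1+\alpha)/2}$ on $S^1 \times [0, T-\varepsilon]$ for every $\varepsilon > 0$, which directly implies the boundedness of $k_\theta$.
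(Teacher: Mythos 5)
Your instinct that the Minkowski weight $a$ must be built into the auxiliary quantity is sound, and your fallback via interior parabolic regularity is a legitimate alternative route; however, as phrased it has a gap: asserting that $k\in C^{1+\alpha,(1+\alpha)/2}$ on $S^1\times[0,T-\varepsilon]$ \emph{for every} $\varepsilon>0$ does not by itself prevent the corresponding bound from blowing up as $\varepsilon\to 0$, and indeed Theorem~\ref{teo4} already gives that regularity on each such compact set. What you need to add is that the Krylov--Safonov and Schauder estimates over a backward parabolic cylinder of \emph{fixed} width $\delta$ depend only on $\|k\|_{L^\infty(S^1\times[0,T))}$, the ellipticity lower bound $\tfrac{a}{a+a''}k^2\ge\tfrac{a}{a+a''}k_{\mathrm{MIN}}^2(0)>0$, and the smooth data $a$, so they yield a bound on $|k_\theta(\cdot,t)|$ that is uniform for $t\in[\delta,T)$; together with compactness on $[0,\delta]$ this gives uniformity on all of $[0,T)$. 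Your primary Bernstein route, by contrast, is unfinished: route (1) with $\phi=k_\theta^2+\lambda k^2$ fails for exactly the reason you identify, since the cubic-in-$k_\theta$ term $4\,\tfrac{2a'}{a+a''}\,k\,k_\theta^3$ dominates every quadratic negative for $k_\theta$ large, no matter how big $\lambda$ is; and route (2) with $\widetilde{\phi}=((ak)_\theta)^2+\lambda(ak)^2$ is a conjecture you never check, so neither version constitutes a proof.

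The paper's argument is simpler and avoids squaring entirely: consider $f=a^2(\theta)\,\tfrac{\partial k}{\partial\theta}\,e^{ct}$ directly. A (somewhat tedious but elementary) computation shows that $f$ satisfies the \emph{linear} parabolic equation
\begin{equation*}
\frac{\partial f}{\partial t}=(c+3k^2)\,f-k^2\,\frac{2a'}{a+a''}\,\frac{\partial f}{\partial\theta}+\frac{\partial}{\partial\theta}\left(k^2\,\frac{a}{a+a''}\,\frac{\partial f}{\partial\theta}\right)
\end{equation*}
with no lower-order remainder. The cancellation that makes this work --- and that your cubic obstruction is pointing at --- is the algebraic identity
\begin{equation*}
a\cdot\frac{2a'}{a+a''}=2a'\cdot\frac{a}{a+a''},
\end{equation*}
which is invoked twice when matching the $k_\theta^2$ and $k_{\theta\theta}$ coefficients of the two sides; the weight $a^2$ is exactly what makes both matches hold simultaneously. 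Once the linear equation is in hand, choosing $c\le-3\sup_{S^1\times[0,T)}k^2$ (finite, since $k$ is bounded by hypothesis) makes the zeroth-order coefficient nonpositive, and the scalar parabolic maximum principle gives $\sup_\theta|f(\theta,t)|\le\sup_\theta|f(\theta,0)|$. Hence $|k_\theta(\theta,t)|\le a^{-2}(\theta)\,e^{-ct}\sup_\theta|a^2(\theta)k_\theta(\theta,0)|$, which is uniformly bounded on $[0,T)$ because $T<\infty$. In short: do not square; weight $k_\theta$ itself by $a^2$ and absorb the $3k^2$ zeroth-order coefficient with an exponential damping factor $e^{ct}$.
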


\begin{proof}
Consider the function $f:S^{1}\times \lbrack 0,T)$ given by $f=\displaystyle %
a^{2}\left( \theta \right) \frac{\partial k}{\partial \theta }e^{ct}$, where 
$c$ is to be chosen later. After some calculations we see that $f$ is a
solution of the second order parabolic equation 
\begin{equation*}
\frac{\partial f}{\partial t}=\left( c+3k^{2}\right) f-k^{2}\frac{2a^{\prime
}}{a+a^{\prime \prime }}\frac{\partial f}{\partial \theta }+\frac{\partial }{%
\partial \theta }\left( k^{2}\frac{a}{a+a^{\prime \prime }}\frac{\partial f}{%
\partial \theta }\right)
\end{equation*}%
Now, taking $c\leq -3\max_{S^{1}\times \lbrack 0,T)}k^{2}$ we can bound $f$
using the maximum principle. It follows that $\displaystyle\frac{\partial k}{%
\partial \theta }$ is also bounded for finite time.
\end{proof}

To prove that the second spatial derivative is bounded we follow, again, the
method used in \textbf{\cite{gage3}}.

\begin{lemma}
\label{lemma11} Define the function $\xi :[0,T)\rightarrow \mathbb{R}$ by 
\begin{equation*}
\xi (t)=\int_{0}^{2\pi }\left( \frac{\partial ^{2}k}{\partial \theta ^{2}}%
\right) ^{4}d\theta
\end{equation*}%
If $k$ is bounded in $S^{1}\times \lbrack 0,T)$ then the function $\xi $ is
also bounded in $[0,T)$.
\end{lemma}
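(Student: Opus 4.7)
The plan is to derive a differential inequality for $\xi$ on $[0,T)$ by integration by parts, dominate its right-hand side by an affine function of $\xi$, and conclude uniform boundedness via Gronwall. Writing $A(\theta)=a/(a+a'')$ and $B(\theta)=2a'/(a+a'')$, both smooth and bounded on $S^{1}$ with $A>0$, the evolution equation of Lemma \ref{lemmapde} reads
$$k_{t}\;=\;A\,k^{2}k_{\theta\theta}+B\,k^{2}k_{\theta}+k^{3}.$$
Recall also that $k$ is bounded below by $k_{\mathrm{MIN}}(0)>0$ (from the minimum principle argument in the proof of Theorem \ref{teo4}) and bounded above by hypothesis, while $k_{\theta}$ is bounded by Proposition \ref{prop5}.

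The first step is to differentiate under the integral and integrate by parts once in $\theta$, using $2\pi$-periodicity to discard boundary terms, so that
$$\xi'(t)\;=\;4\int_{0}^{2\pi}(k_{\theta\theta})^{3}(k_{t})_{\theta\theta}\,d\theta\;=\;-12\int_{0}^{2\pi}(k_{\theta\theta})^{2}k_{\theta\theta\theta}(k_{t})_{\theta}\,d\theta.$$
Expanding $(k_{t})_{\theta}$ by the product rule from the equation above, the only third-order term is $A\,k^{2}k_{\theta\theta\theta}$, which contributes the dissipative term
$$-12\int_{0}^{2\pi}A\,k^{2}(k_{\theta\theta})^{2}(k_{\theta\theta\theta})^{2}\,d\theta\;\leq\;0.$$
All other contributions take the form $\Phi(\theta)(k_{\theta\theta})^{m}k_{\theta\theta\theta}$ with $m\in\{2,3\}$, where $\Phi$ depends only on $\theta$, on $k$, and on $k_{\theta}$, hence is uniformly bounded on $S^{1}\times[0,T)$.

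For each such lower-order term I apply Young's inequality in the weighted form
$$\bigl|\Phi\,(k_{\theta\theta})^{m}k_{\theta\theta\theta}\bigr|\;\leq\;\varepsilon\,A\,k^{2}(k_{\theta\theta})^{2}(k_{\theta\theta\theta})^{2}+C_{\varepsilon}\,\Phi^{2}(k_{\theta\theta})^{2m-2}.$$
Choosing $\varepsilon$ small enough so that the sum of these first pieces is absorbed by the dissipative term leaves integrands bounded by multiples of $(k_{\theta\theta})^{2}$ or $(k_{\theta\theta})^{4}$. The latter integrates to $\xi(t)$, while the former integrates to at most $\sqrt{2\pi\,\xi(t)}$ by Cauchy--Schwarz. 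Combining everything yields
$$\xi'(t)\;\leq\;C_{1}\,\xi(t)+C_{2}\sqrt{\xi(t)}+C_{3},$$
with constants depending only on $\mathcal{P}$ and on the a priori bounds for $k$ and $k_{\theta}$; a standard Gronwall comparison then keeps $\xi$ bounded on $[0,T)$.

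The main obstacle is the weight-matching in the Young estimate: since the dissipative term carries the factor $(k_{\theta\theta})^{2}$, which may vanish pointwise, one cannot absorb a bare $(k_{\theta\theta\theta})^{2}$. Every occurrence of $k_{\theta\theta\theta}$ produced by $(k_{t})_{\theta}$ must therefore be paired with one factor of $k_{\theta\theta}$ \emph{before} applying Young's inequality. Once this pairing is done consistently for each of the terms generated by expanding $(k_{t})_{\theta}$, and the bounds on $k$ and $k_{\theta}$ are invoked, the argument becomes routine; but the sign structure, and hence strict convexity of $\mathcal{P}$, is essential to keep the dissipative term effective.
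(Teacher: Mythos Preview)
Your proposal is correct and follows essentially the same approach as the paper: one integration by parts to extract the dissipative term $-12\int A\,k^{2}(k_{\theta\theta})^{2}(k_{\theta\theta\theta})^{2}\,d\theta$, absorption of the cross terms via Young's inequality (the paper writes each as a product $\bigl(k\,k_{\theta\theta}k_{\theta\theta\theta}\bigr)\cdot[\,\cdots]$ with the same weight-matching you highlight), reduction to $\xi'\leq C\xi+C'\sqrt{\xi}$, and Gronwall. The paper's argument differs only in that it writes out the expansion of $(k_{t})_{\theta}$ term by term and estimates each explicitly, whereas you summarize the structure; the logic is identical.
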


\begin{proof}
Let us denote for simplicity $F=\displaystyle\frac{a}{a+a^{\prime \prime }}$
and $G=\displaystyle\frac{2a^{\prime }}{a+a^{\prime \prime }}$. Using
integration by parts and the evolution equation we compute 
\begin{align*}
\frac{d\xi }{dt}& =\frac{d}{dt}\left( \int_{0}^{2\pi }\left( \frac{\partial
^{2}k}{\partial \theta ^{2}}\right) ^{4}d\theta \right) =4\int_{0}^{2\pi
}\left( \frac{\partial ^{2}k}{\partial \theta ^{2}}\right) ^{3}\frac{%
\partial }{\partial t}\left( \frac{\partial ^{2}k}{\partial \theta ^{2}}%
\right) d\theta = \\
& =-12\int_{0}^{2\pi }\left( \frac{\partial ^{2}k}{\partial \theta ^{2}}%
\right) ^{2}\frac{\partial ^{3}k}{\partial \theta ^{3}}\frac{\partial }{%
\partial \theta }\left( k^{2}F\frac{\partial ^{2}k}{\partial \theta ^{2}}%
+k^{2}G\frac{\partial k}{\partial \theta }+k^{3}\right) d\theta = \\
& =12\int_{0}^{2\pi }k\frac{\partial k}{\partial \theta }\left( \frac{%
\partial ^{2}k}{\partial \theta ^{2}}\right) ^{2}\frac{\partial ^{3}k}{%
\partial \theta ^{3}}\left( -2F\frac{\partial ^{2}k}{\partial \theta ^{2}}-k%
\frac{\partial G}{\partial \theta }-3k\right) -\frac{\partial F}{\partial
\theta }k^{2}\left( \frac{\partial ^{2}k}{\partial \theta ^{2}}\right) ^{3}%
\frac{\partial ^{3}k}{\partial \theta ^{3}}- \\
& -2Gk\left( \frac{\partial k}{\partial \theta }\right) ^{2}\left( \frac{%
\partial ^{2}k}{\partial \theta ^{2}}\right) ^{2}\frac{\partial ^{3}k}{%
\partial \theta ^{3}}-Gk^{2}\left( \frac{\partial ^{2}k}{\partial \theta ^{2}%
}\right) ^{3}\frac{\partial ^{3}k}{\partial \theta ^{3}}-Fk^{2}\left( \frac{%
\partial ^{2}k}{\partial \theta ^{2}}\right) ^{2}\left( \frac{\partial ^{3}k%
}{\partial \theta ^{3}}\right) ^{2}d\theta
\end{align*}%
Now put $C_{1}=\min_{S^{1}}F$. We have 
\begin{equation*}
-Fk^{2}\left( \frac{\partial ^{2}k}{\partial \theta ^{2}}\right) ^{2}\left( 
\frac{\partial ^{3}k}{\partial \theta ^{3}}\right) ^{2}\leq
-C_{1}k^{2}\left( \frac{\partial ^{2}k}{\partial \theta ^{2}}\right)
^{2}\left( \frac{\partial ^{3}k}{\partial \theta ^{3}}\right) ^{2}
\end{equation*}%
Notice that $C_{1}>0$ and remember that if $k$ is bounded then $\displaystyle%
\frac{\partial k}{\partial \theta }$ is also bounded. Choosing $\displaystyle%
\epsilon =\frac{16}{C_{1}}$ we can use the inequality $ab\leq \displaystyle%
\frac{4}{\epsilon }a^{2}+\epsilon b^{2}$ to estimate the other terms of the
integral as follows:

\noindent $\mathrm{1.}\ \displaystyle\left( k\frac{\partial ^{2}k}{\partial
\theta ^{2}}\frac{\partial ^{3}k}{\partial \theta ^{3}}\right) \left[ \left(
-2F\frac{\partial ^{2}k}{\partial \theta ^{2}}-k\frac{\partial G}{\partial
\theta }-3k\right) \frac{\partial k}{\partial \theta }\frac{\partial ^{2}k}{%
\partial \theta ^{2}}\right] \leq \frac{4}{\epsilon }\left( k\frac{\partial
^{2}k}{\partial \theta ^{2}}\frac{\partial ^{3}k}{\partial \theta ^{3}}%
\right) ^{2}+C_{2}\left( \left( \frac{\partial ^{2}k}{\partial \theta ^{2}}%
\right) ^{4}+\left( \frac{\partial ^{2}k}{\partial \theta ^{2}}\right)
^{2}\right) ,$ where $C_{2}=\epsilon \displaystyle\max_{S^{1}\times \lbrack
0,T)}\left( \frac{\partial k}{\partial \theta }\right) ^{2}\left(
12F^{2}+3k^{2}\left( \frac{\partial G}{\partial \theta }\right)
^{2}+27k^{2}\right) $. Here we also used the inequality\newline
$\left( a+b+c\right) ^{2}\leq 3\left( a^{2}+b^{2}+c^{2}\right) $.

\noindent $\mathrm{2.}\ \displaystyle\left( k\frac{\partial ^{2}k}{\partial
\theta ^{2}}\frac{\partial ^{3}k}{\partial \theta ^{3}}\right) \left[ -\frac{%
\partial F}{\partial \theta }k\left( \frac{\partial ^{2}k}{\partial \theta
^{2}}\right) ^{2}\right] \leq \frac{4}{\epsilon }\left( k\frac{\partial ^{2}k%
}{\partial \theta ^{2}}\frac{\partial ^{3}k}{\partial \theta ^{3}}\right)
^{2}+C_{3}\left( \frac{\partial ^{2}k}{\partial \theta ^{2}}\right) ^{4},$
where $C_{3}=\epsilon \displaystyle\max_{S^{1}}\left( \frac{\partial F}{%
\partial \theta }\right) ^{2}\max_{S^{1}\times \lbrack 0,T)}k^{2}.$

\noindent $\mathrm{3.}\ \displaystyle\left( k\frac{\partial ^{2}k}{\partial
\theta ^{2}}\frac{\partial ^{3}k}{\partial \theta ^{3}}\right) \left[
-2G\left( \frac{\partial k}{\partial \theta }\right) ^{2}\frac{\partial ^{2}k%
}{\partial \theta ^{2}}\right] \leq \frac{4}{\epsilon }\left( k\frac{%
\partial ^{2}k}{\partial \theta ^{2}}\frac{\partial ^{3}k}{\partial \theta
^{3}}\right) ^{2}+C_{4}\left( \frac{\partial ^{2}k}{\partial \theta ^{2}}%
\right) ^{2},$ where $C_{4}=4\epsilon \displaystyle\max_{S^{1}\times \lbrack
0,T)}\left( \frac{\partial k}{\partial \theta }\right)
^{4}\max_{S^{1}}G^{2}. $

\noindent $\mathrm{4.}\ \displaystyle\left( k\frac{\partial ^{2}k}{\partial
\theta ^{2}}\frac{\partial ^{3}k}{\partial \theta ^{3}}\right) \left[
-Gk\left( \frac{\partial ^{2}k}{\partial \theta ^{2}}\right) ^{2}\right]
\leq \frac{4}{\epsilon }\left( k\frac{\partial ^{2}k}{\partial \theta ^{2}}%
\frac{\partial ^{3}k}{\partial \theta ^{3}}\right) ^{2}+C_{5}\left( \frac{%
\partial ^{2}k}{\partial \theta ^{2}}\right) ^{4},$ where $C_{5}=\epsilon %
\displaystyle\max_{S^{1}}G^{2}\max_{S^{1}\times \lbrack 0,T)}k^{2}.$

Combining these estimates and writing $C_{6}=12(C_{2}+C_{3}+C_{5})$ and $%
C_{7}=12(C_{2}+C_{4})$ we have 
\begin{equation*}
\frac{d\xi }{dt}\leq C_{6}\int_{0}^{2\pi }\left( \frac{\partial ^{2}k}{%
\partial \theta ^{2}}\right) ^{4}d\theta +C_{7}\int_{0}^{2\pi }\left( \frac{%
\partial ^{2}k}{\partial \theta ^{2}}\right) ^{2}d\theta
\end{equation*}%
Holder's inequality and the inequality $\sqrt{A}\leq \displaystyle\frac{A+1}{%
2}$ (if $A\geq 0$) yield%
\begin{align*}
\frac{d\xi }{dt}& \leq C_{6}\int_{0}^{2\pi }\left( \frac{\partial ^{2}k}{%
\partial \theta ^{2}}\right) ^{4}d\theta +C_{7}\sqrt{2\pi }\left(
\int_{0}^{2\pi }\left( \frac{\partial ^{2}k}{\partial \theta ^{2}}\right)
^{4}d\theta \right) ^{1/2}= \\
& =C_{6}\xi +C_{7}\sqrt{2\pi }\sqrt{\xi }\leq C_{6}\xi +C_{7}\sqrt{2\pi }%
\left( \frac{\xi +1}{2}\right) =C_{8}\xi +C_{9},
\end{align*}%
with $C_{8}=C_{6}+\displaystyle\frac{C_{7}\sqrt{2\pi }}{2}$ and $C_{9}=%
\displaystyle\frac{C_{7}\sqrt{2\pi }}{2}$.

By the Gronwall's inequality we have immediately that $\xi $ is bounded for
finite time. This completes the proof.
\end{proof}

\begin{coro}
\label{coro1} The function $t \mapsto \displaystyle\int_0^{2\pi}\left(\frac{%
\partial^2k}{\partial\theta^2}\right)^2d\theta$ is bounded in $[0,T)$.
\end{coro}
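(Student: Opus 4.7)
The plan is to deduce this corollary as an almost immediate consequence of Lemma \ref{lemma11}, since the $L^4$-bound on $\partial^2 k/\partial\theta^2$ on the compact interval $[0,2\pi]$ controls the $L^2$-norm.

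First I would apply the Cauchy--Schwarz inequality (i.e.\ H\"older with exponents $2$ and $2$) to the product $1 \cdot \left(\frac{\partial^2 k}{\partial \theta^2}\right)^2$ on $[0,2\pi]$, which gives
\begin{equation*}
\int_{0}^{2\pi}\left(\frac{\partial^{2}k}{\partial\theta^{2}}\right)^{2}d\theta \;\leq\; \left(\int_{0}^{2\pi}1\,d\theta\right)^{1/2}\left(\int_{0}^{2\pi}\left(\frac{\partial^{2}k}{\partial\theta^{2}}\right)^{4}d\theta\right)^{1/2} \;=\; \sqrt{2\pi}\,\sqrt{\xi(t)}.
\end{equation*}
By Lemma \ref{lemma11}, $\xi(t)$ is bounded on $[0,T)$, so the right-hand side is bounded on $[0,T)$ as well, which is exactly the claim.

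There is really no obstacle here: the only thing to check is that the conclusion of Lemma \ref{lemma11} is directly applicable, which it is, since the hypothesis of the corollary (implicitly, that $k$ is bounded on $S^{1}\times[0,T)$) is the same hypothesis already used in Lemma \ref{lemma11}. So the proposed proof reduces to the single display above plus one sentence invoking the previous lemma.
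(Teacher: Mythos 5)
Your proof is correct and matches the paper's: the paper also derives the corollary as an immediate consequence of Lemma~\ref{lemma11} via H\"older's inequality, which for exponents $(2,2)$ is exactly the Cauchy--Schwarz step you wrote out.
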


\begin{proof}
This is immediate by the Holder's inequality.
\end{proof}

\begin{lemma}
\label{lemma12} The function $\beta :[0,T)\rightarrow \mathbb{R}$ given by 
\begin{equation*}
\beta (t)=\int_{0}^{2\pi }\left( \frac{\partial ^{3}k}{\partial \theta ^{3}}%
\right) ^{2}d\theta
\end{equation*}%
is bounded provided $k$ is bounded in $S^{1}\times \lbrack 0,T)$.
\end{lemma}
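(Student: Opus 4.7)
The plan is to mirror the strategy used in Lemma \ref{lemma11}, but one derivative order higher: differentiate $\beta$ in $t$, use the PDE \eqref{eqn:17} and integration by parts to expose a manifestly negative leading term involving $\bigl(\partial^{4}_\theta k\bigr)^2$, and then use weighted Young inequalities of the form $ab\leq \frac{1}{\varepsilon}a^2+\varepsilon b^2$ to absorb every other appearance of $\partial^{4}_\theta k$ into that leading negative term. The remainder will depend only on $k$, $\partial_\theta k$, $\partial^2_\theta k$, $\partial^3_\theta k$, together with bounded coefficients built from $a,a',a'',a'''$.

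Concretely, writing $F=\frac{a}{a+a''}$ and $G=\frac{2a'}{a+a''}$ as in the previous lemma, I would compute
\begin{equation*}
\frac{d\beta}{dt}=2\int_{0}^{2\pi}\frac{\partial^{3}k}{\partial\theta^{3}}\,\frac{\partial^{3}}{\partial\theta^{3}}\!\left(Fk^{2}\frac{\partial^{2}k}{\partial\theta^{2}}+Gk^{2}\frac{\partial k}{\partial\theta}+k^{3}\right)d\theta
\end{equation*}
and integrate by parts once in $\theta$ to move one derivative off $\partial^{3}_\theta k$. Expanding the $\theta$-derivatives of the right-hand side by Leibniz, the only term containing $\partial^{4}_\theta k$ twice is $-2\int Fk^{2}\bigl(\partial^{4}_\theta k\bigr)^{2}d\theta$. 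Since $F\geq C_{1}>0$ on $S^{1}$ and $k$ has a positive lower bound $k_{\mathrm{MIN}}(0)$ (by the maximum-principle argument in Theorem \ref{teo4}), this term is bounded above by $-2C_{1}k_{\mathrm{MIN}}(0)^{2}\int\bigl(\partial^{4}_\theta k\bigr)^{2}d\theta$, which is the crucial negative reservoir.

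Every other contribution has the shape $(\partial^{4}_\theta k)\cdot P$, where $P$ is a polynomial in $k,\partial_\theta k,\partial^{2}_\theta k,\partial^{3}_\theta k$ with coefficients that are smooth functions of $\theta$. Applying $2|(\partial^{4}_\theta k)P|\leq \varepsilon(\partial^{4}_\theta k)^{2}+\varepsilon^{-1}P^{2}$ with $\varepsilon$ small enough (depending on $C_{1}$ and $k_{\mathrm{MIN}}(0)$), the $\varepsilon(\partial^{4}_\theta k)^{2}$ contributions are absorbed, leaving
\begin{equation*}
\frac{d\beta}{dt}\leq \int_{0}^{2\pi}\bigl(\text{polynomial in }k,\partial_\theta k,\partial^{2}_\theta k,\partial^{3}_\theta k\bigr)\,d\theta.
\end{equation*}
At this point I invoke the hypotheses already secured: $k$ is uniformly bounded (hypothesis), $\partial_\theta k$ is uniformly bounded (Proposition \ref{prop5}), $\int(\partial^{2}_\theta k)^{2}d\theta$ is bounded (Corollary \ref{coro1}), and $\int(\partial^{2}_\theta k)^{4}d\theta$ is bounded (Lemma \ref{lemma11}). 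Using Hölder's inequality on the integrals involving $\partial^{2}_\theta k$, every term either reduces to a constant or to a constant multiple of $\int(\partial^{3}_\theta k)^{2}d\theta=\beta(t)$. The result is an inequality of the form
\begin{equation*}
\frac{d\beta}{dt}\leq C_{*}\beta(t)+C_{**},
\end{equation*}
from which Gronwall's lemma immediately yields that $\beta$ is bounded on $[0,T)$.

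The main obstacle will be bookkeeping: the expansion of $\partial^{3}_\theta$ applied to $Fk^{2}\partial^{2}_\theta k+Gk^{2}\partial_\theta k+k^{3}$ produces a fairly large number of monomials, and one must verify for each one that the weighted Young inequality, combined with Hölder and the a priori bounds, leaves only linear-in-$\beta$ or constant contributions. The cases that deserve the most care are those in which $\partial^{3}_\theta k$ itself appears paired against $\partial^{4}_\theta k$ (so that after absorbing $\varepsilon(\partial^{4}_\theta k)^{2}$ one is left with $\varepsilon^{-1}(\partial^{3}_\theta k)^{2}$ times bounded factors, producing exactly the $C_{*}\beta$ term), and those in which $(\partial^{2}_\theta k)^{4}$ appears, where Lemma \ref{lemma11} is essential. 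Apart from these, the argument is a direct higher-order analogue of the computation carried out for $\xi(t)$.
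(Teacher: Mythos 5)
Your proposal matches the paper's proof essentially line for line: compute $d\beta/dt$, integrate by parts once to expose $\partial^{4}_{\theta}k$, use the negative $-2\int Fk^{2}(\partial^{4}_{\theta}k)^{2}\,d\theta$ reservoir together with weighted Young inequalities to absorb the remaining fourth-derivative terms, invoke the a priori bounds on $k$, $\partial_{\theta}k$, $\int(\partial^{2}_{\theta}k)^{2}$, $\int(\partial^{2}_{\theta}k)^{4}$, and the positive lower bound $k_{\mathrm{MIN}}(0)$, then close with Gronwall. The paper carries out the Leibniz expansion explicitly but then appeals to the ``same trick used in Lemma~\ref{lemma11}'' exactly as you describe, so there is no substantive difference.
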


\begin{proof}
Adopting the same notation as in Lemma \ref{lemma11} and using, again,
integration by parts and the evolution equation we have the formula 
\begin{align*}
& \frac{d\beta}{dt} = -2\int_{0}^{2\pi }\frac{\partial ^{4}k}{\partial \theta ^{4}}\left( \frac{%
\partial ^{2}F}{\partial \theta ^{2}}k^{2}\frac{\partial ^{2}k}{\partial
\theta ^{2}}+2\frac{\partial F}{\partial \theta }k\frac{\partial k}{\partial
\theta }\frac{\partial ^{2}k}{\partial \theta ^{2}}+\frac{\partial F}{%
\partial \theta }k^{2}\frac{\partial ^{3}k}{\partial \theta ^{3}}+2\frac{%
\partial F}{\partial \theta }k\frac{\partial k}{\partial \theta }\frac{%
\partial ^{2}k}{\partial \theta ^{2}}+\right. \\
& \left. +2F\left( \frac{\partial k}{\partial \theta }\right) ^{2}\frac{%
\partial ^{2}k}{\partial \theta ^{2}}+2Fk\left( \frac{\partial ^{2}k}{%
\partial \theta ^{2}}\right) ^{2}+2Fk\frac{\partial k}{\partial \theta }%
\frac{\partial ^{3}k}{\partial \theta ^{3}}+\frac{\partial F}{\partial
\theta }k^{2}\frac{\partial ^{3}k}{\partial \theta ^{3}}+2Fk\frac{\partial k%
}{\partial \theta }\frac{\partial ^{3}k}{\partial \theta ^{3}}+\right. \\
& \left. +Fk^{2}\frac{\partial ^{4}k}{\partial \theta ^{4}}+\frac{\partial
^{2}G}{\partial \theta ^{2}}k^{2}\frac{\partial k}{\partial \theta }+2k\frac{%
\partial G}{\partial \theta }\left( \frac{\partial k}{\partial \theta }%
\right) ^{2}+\frac{\partial G}{\partial \theta }k^{2}\frac{\partial ^{2}k}{%
\partial \theta ^{2}}+2\frac{\partial G}{\partial \theta }k\left( \frac{%
\partial k}{\partial \theta }\right) ^{2}+\right. \\
& \left. +2G\left( \frac{\partial k}{\partial \theta }\right) ^{3}+4Gk\frac{%
\partial k}{\partial \theta }\frac{\partial ^{2}k}{\partial \theta ^{2}}+%
\frac{\partial G}{\partial \theta }k^{2}\frac{\partial ^{2}k}{\partial
\theta ^{2}}+2Gk\frac{\partial k}{\partial \theta }\frac{\partial ^{2}k}{%
\partial \theta ^{2}}+Gk^{2}\frac{\partial ^{3}k}{\partial \theta ^{3}}%
+\right. \\
& \left. +6k\left( \frac{\partial k}{\partial \theta }\right) ^{2}+3k^{2}%
\frac{\partial ^{2}k}{\partial \theta ^{2}}\right) d\theta
\end{align*}%
By the same trick used in Lemma \ref{lemma11} we can transform away the
fourth derivative. Using bounds for $k$, $\displaystyle\frac{\partial k}{%
\partial \theta }$, $\displaystyle\int_{0}^{2\pi }\left( \frac{\partial ^{2}k%
}{\partial \theta ^{2}}\right) ^{4}d\theta $ and $\displaystyle%
\int_{0}^{2\pi }\left( \frac{\partial ^{2}k}{\partial \theta ^{2}}\right)
^{2}d\theta $ and the fact that $k$ is bounded away from zero in $%
S^{1}\times \lbrack 0,T)$ by $k_{\mathrm{MIN}}(0)$ we have 
\begin{equation*}
\frac{d\beta }{dt}\leq C_{1}\beta +C_{2}
\end{equation*}%
where $C_{1}$ and $C_{2}$ are constants that don't depend on $t$. Now,
Gronwall's inequality gives that $\beta $ is bounded in $[0,T)$.
\end{proof}

\begin{prop}
\label{prop6} If $k$ is bounded in $S^1\times [0,T)$ then $k^{\prime \prime
} $ is also bounded in $S^1\times [0,T)$.
\end{prop}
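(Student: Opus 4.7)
The plan is to deduce the pointwise bound from the $L^2$ bound on $\partial^3 k/\partial\theta^3$ already established in Lemma \ref{lemma12}, using periodicity and Cauchy--Schwarz. This is a short argument, so no major obstacle is expected; the only subtle ingredient is noticing that the third-derivative estimate suffices without having to run another Gronwall-style computation for $\partial^2 k/\partial\theta^2$ itself.

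First I would observe that for each fixed $t\in[0,T)$, the function $\theta\mapsto \partial k/\partial\theta(\theta,t)$ is $2\pi$-periodic and $C^1$ in $\theta$, so
\begin{equation*}
\int_0^{2\pi}\frac{\partial^2 k}{\partial\theta^2}(\theta,t)\,d\theta = \frac{\partial k}{\partial\theta}(2\pi,t) - \frac{\partial k}{\partial\theta}(0,t) = 0.
\end{equation*}
Since $\partial^2 k/\partial\theta^2(\cdot,t)$ is continuous and has zero mean, the intermediate value theorem provides a point $\theta_0 = \theta_0(t)\in[0,2\pi]$ at which $\partial^2 k/\partial\theta^2(\theta_0,t)=0$.

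Next, for an arbitrary $\theta\in[0,2\pi]$, the fundamental theorem of calculus yields
\begin{equation*}
\frac{\partial^2 k}{\partial\theta^2}(\theta,t) = \int_{\theta_0}^{\theta}\frac{\partial^3 k}{\partial\sigma^3}(\sigma,t)\,d\sigma,
\end{equation*}
and Cauchy--Schwarz then gives the uniform estimate
\begin{equation*}
\left|\frac{\partial^2 k}{\partial\theta^2}(\theta,t)\right| \leq \sqrt{|\theta-\theta_0|}\left(\int_0^{2\pi}\left(\frac{\partial^3 k}{\partial\sigma^3}(\sigma,t)\right)^2 d\sigma\right)^{1/2} \leq \sqrt{2\pi}\,\sqrt{\beta(t)}.
\end{equation*}

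Finally, since the hypothesis that $k$ is bounded on $S^1\times[0,T)$ is exactly what Lemma \ref{lemma12} needs in order to conclude that $\beta(t)$ is bounded on $[0,T)$, the right-hand side above is bounded by a constant independent of $(\theta,t)\in S^1\times[0,T)$, completing the proof. The argument is essentially Sobolev embedding $H^1(S^1)\hookrightarrow C^0(S^1)$ made explicit, and no further estimates are required.
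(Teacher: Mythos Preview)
Your argument is correct and in the same spirit as the paper's---both are explicit versions of the Sobolev embedding $H^{1}(S^{1})\hookrightarrow C^{0}(S^{1})$---but the implementations differ. The paper applies a Poincar\'e-type inequality to $u=(k'')^{2}$, obtaining
\[
\max_{[0,2\pi]}(k'')^{2}\leq \frac{1}{2\pi}\int_{0}^{2\pi}(k'')^{2}\,d\theta+2\int_{0}^{2\pi}|k''k'''|\,d\theta,
\]
and then bounds the last integral by Cauchy--Schwarz; this route consumes both Corollary~\ref{coro1} (the bound on $\int(k'')^{2}$) and Lemma~\ref{lemma12} (the bound on $\int(k''')^{2}$). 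Your route instead exploits the periodicity of $k'$ to locate a zero of $k''$ and integrates $k'''$ from there, so you only need Lemma~\ref{lemma12} and can bypass Corollary~\ref{coro1} entirely. That makes your argument marginally more economical, while the paper's version has the minor advantage of not needing to track the $t$-dependent point $\theta_{0}(t)$.
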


\begin{proof}
We use the Poincar\'{e} inequality: if $u\in C^{1}([0,2\pi ])$ then%
\begin{equation*}
\max_{\lbrack 0,2\pi ]}|u|\leq \frac{1}{2\pi }\int_{0}^{2\pi
}u+\int_{0}^{2\pi }|u^{\prime }|
\end{equation*}%
Fix $t\in \lbrack 0,T)$. Then%
\begin{equation*}
\max_{\lbrack 0,2\pi ]}k^{\prime \prime 2}\leq \frac{1}{2\pi }\int_{0}^{2\pi
}k^{\prime \prime 2}d\theta +2\int_{0}^{2\pi }|k^{\prime \prime }(\theta
,t)k^{\prime \prime \prime }(\theta ,t)|\ d\theta
\end{equation*}%
Using Schwarz's inequality on the last integral and the previous lemmas we
have that $\displaystyle\max_{[0,2\pi ]}k^{\prime \prime 2}$ is bounded by a
constant that doesn't depend on $t$. This completes the proof.
\end{proof}

\begin{prop}
\label{prop7} If $k$ is bounded then all the spatial derivatives of $k$ are
also bounded.
\end{prop}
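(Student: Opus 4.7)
The plan is to proceed by induction on the order of the spatial derivative. For $n \geq 2$, I would adopt the inductive hypothesis that $k, \partial k/\partial \theta, \ldots, \partial^{n} k/\partial \theta^{n}$ are all uniformly bounded on $S^{1}\times [0,T)$ and that $\int_{0}^{2\pi}\bigl(\partial^{n+1} k/\partial \theta^{n+1}\bigr)^{2}\,d\theta$ is bounded on $[0,T)$. The base case $n=2$ is already secured: $k$ is bounded by assumption, $\partial k/\partial \theta$ by Proposition \ref{prop5}, $\partial^{2}k/\partial \theta^{2}$ by Proposition \ref{prop6}, and the $L^{2}$-bound on $\partial^{3}k/\partial \theta^{3}$ is Lemma \ref{lemma12}.

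For the inductive step, I would first differentiate the evolution equation of Lemma \ref{lemmapde} exactly $n+1$ times in $\theta$, obtaining a PDE for $\partial^{n+1}k/\partial \theta^{n+1}$ whose leading term is $\frac{a}{a+a''}k^{2}\,\partial^{n+3}k/\partial \theta^{n+3}$ plus a remainder $R_{n}$ consisting of products of lower-order $\theta$-derivatives of $k$ with smooth $\theta$-dependent coefficients. Multiplying by $\partial^{n+1}k/\partial \theta^{n+1}$, integrating over $S^{1}$, and integrating by parts on the leading term yields, exactly as in the computations of Lemmas \ref{lemma11} and \ref{lemma12},
\begin{equation*}
\tfrac{1}{2}\tfrac{d}{dt}\int_{0}^{2\pi}\bigl(\partial^{n+1}k/\partial \theta^{n+1}\bigr)^{2}\,d\theta \;\leq\; -C\int_{0}^{2\pi}\bigl(\partial^{n+2}k/\partial \theta^{n+2}\bigr)^{2}\,d\theta \;+\; \mathcal{R},
\end{equation*}
where $C>0$ since $a/(a+a'')\geq C_{1}>0$ and $k$ is bounded away from $0$ by $k_{\mathrm{MIN}}(0)$, and $\mathcal{R}$ collects the remaining contributions. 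After the integration by parts, no single factor in $\mathcal{R}$ exceeds order $n+2$, and at most one such factor appears per term. The Young inequality $ab\leq \epsilon b^{2}+ a^{2}/(4\epsilon)$ with $\epsilon$ small enough absorbs every factor of $\partial^{n+2}k/\partial \theta^{n+2}$ into the negative leading term, leaving a polynomial expression in quantities bounded by the inductive hypothesis. Gronwall then bounds $\int_{0}^{2\pi}\bigl(\partial^{n+1}k/\partial \theta^{n+1}\bigr)^{2}\,d\theta$, and repeating the same procedure one order higher delivers the $L^{2}$-bound on $\partial^{n+2}k/\partial \theta^{n+2}$.

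With both $L^{2}$-bounds in hand, I would close the induction exactly as in Proposition \ref{prop6}, applying the Poincar\'{e} inequality $\max_{[0,2\pi]}|u|\leq (2\pi)^{-1}\int_{0}^{2\pi}|u|\,d\theta + \int_{0}^{2\pi}|u'|\,d\theta$ to $u=\partial^{n+1}k/\partial \theta^{n+1}$ and bounding both terms by Cauchy--Schwarz using the $L^{2}$-estimates just obtained. The main obstacle is the combinatorial bookkeeping of the remainder $R_{n}$: the number of terms grows rapidly with $n$, and each must be checked to fit into the Young-type absorption scheme. The decisive structural observation is uniform in $n$, however, namely that every term of $R_{n}$ is a product of $\theta$-derivatives of $k$ whose total order equals $n+3$, so after a single integration by parts no factor can carry order greater than $n+2$ and only one factor per term attains that order. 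This is precisely the mechanism already exploited in the explicit estimates of Lemmas \ref{lemma11} and \ref{lemma12}, so the inductive argument is a stable extension of what is in place rather than requiring any new analytical ingredient.
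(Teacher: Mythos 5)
Your proposal is correct in substance but uses a genuinely different mechanism from the paper. You continue the energy-estimate pattern of Lemmas \ref{lemma11}--\ref{lemma12} and Proposition \ref{prop6}, bootstrapping inductively from $L^2$-bounds on $\partial^{n+1}k/\partial\theta^{n+1}$ and $\partial^{n+2}k/\partial\theta^{n+2}$ to a pointwise bound on $\partial^{n+1}k/\partial\theta^{n+1}$ via the Poincar\'{e} inequality and Cauchy--Schwarz. The paper instead abandons energy estimates at this stage and mirrors Proposition \ref{prop5}: it sets $u = e^{ct}\,\partial^{3}k/\partial\theta^{3}$, observes that $u$ solves a linear second-order parabolic equation $\partial u/\partial t = Fk^2\,\partial^2 u/\partial\theta^2 + P\,\partial u/\partial\theta + (c+Q)u + R$ whose coefficients $P, Q, R$ involve $k$-derivatives only up to the order already known to be bounded, and applies the maximum principle with $c$ chosen sufficiently negative; it then iterates this scheme for each higher derivative. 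The maximum-principle route bypasses exactly the combinatorial bookkeeping you identify as the main obstacle: since only boundedness of the coefficient functions is needed, one never has to track which term of the expanded $\partial_\theta^{n+1}$-derivative of the right-hand side carries which order, nor split the energy identity into pieces absorbable by Young's inequality. Your approach is sound but requires the verification you acknowledge; one small imprecision is your claim that every term of $R_n$ has total derivative order \emph{exactly} $n+3$ -- the correct statement is \emph{at most} $n+3$ (the contributions arising from $Gk^2\,\partial k/\partial\theta$ and $k^3$ have strictly lower total order), though this is harmless since lower-order terms are only easier to absorb.
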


\begin{proof}
We have already proved that the two first derivatives are bounded. Using
this, we will prove that $\displaystyle\frac{\partial ^{3}k}{\partial \theta
^{3}}$ is bounded. Consider the function $u:S^{1}\times \lbrack
0,T)\rightarrow \mathbb{R}$ given by $u=e^{ct}\displaystyle\frac{\partial
^{3}k}{\partial \theta ^{3}}$. This function is a solution to a linear
parabolic second order equation of type 
\begin{equation*}
\frac{\partial u}{\partial t}=Fk^{2}\frac{\partial ^{2}u}{\partial \theta
^{2}}+P\frac{\partial u}{\partial \theta }+(c+Q)u+R,
\end{equation*}%
where $P,Q$ and $R$ are polynomials whose variables are the functions $k$, $%
F $, $G$ and its derivatives. Since the derivatives of $k$ only appear until
the second order in the terms $Q$ and $R$ one can use the maximum principle
for a suitable $c$ to show that $u$ is bounded. It follows that $%
\displaystyle\frac{\partial ^{3}k}{\partial \theta ^{3}}$ is bounded for
finite time. For the higher derivatives the argument is essentially the same.
\end{proof}

\begin{coro}
\label{coro2} If $k$ is bounded then its time derivatives of all orders are
also bounded.
\end{coro}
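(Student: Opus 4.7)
The plan is to argue by induction on the order $n$ of the time derivative, using the PDE of Lemma \ref{lemmapde} as a bookkeeping device. Write the evolution equation schematically as
\begin{equation*}
\frac{\partial k}{\partial t} = F(\theta)\,k^2 \frac{\partial^2 k}{\partial \theta^2} + G(\theta)\,k^2 \frac{\partial k}{\partial \theta} + k^3,
\end{equation*}
where $F = a/(a+a'')$ and $G = 2a'/(a+a'')$ are $C^\infty$ functions of $\theta$ alone, since $a + a'' > 0$ on the compact circle. In particular $F$, $G$, and all their $\theta$-derivatives are bounded on $S^1$. The base case $n=1$ is then immediate: the right-hand side is a polynomial in $k$, $\partial_\theta k$, $\partial_\theta^2 k$ with bounded coefficients, and all three factors are bounded in $S^1 \times [0,T)$ by hypothesis and by Proposition \ref{prop7}.

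For the inductive step, I would prove the stronger statement that for every $n \geq 1$,
\begin{equation*}
\frac{\partial^n k}{\partial t^n} = P_n\!\left(\theta,\, k,\, \frac{\partial k}{\partial \theta},\, \ldots,\, \frac{\partial^{2n} k}{\partial \theta^{2n}}\right),
\end{equation*}
where $P_n$ is a polynomial in the spatial derivatives of $k$ whose coefficients are smooth bounded functions of $\theta$ (specifically, polynomials in $F$, $G$, their $\theta$-derivatives, and $a$, $a'$, $\ldots$). Granting this, Proposition \ref{prop7} immediately gives that $\partial_t^n k$ is bounded on $S^1 \times [0,T)$, proving the corollary. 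To pass from $n$ to $n+1$, differentiate the identity above in $t$, commute $\partial_t$ past each $\partial_\theta^j$ (which is legal since $k$ is smooth on $S^1 \times [0, T-\varepsilon]$), and then substitute the PDE for every occurrence of $\partial_t k$. The result is a polynomial in spatial derivatives of $k$ up to order $2(n+1)$ with smooth coefficients of the same type, which completes the induction.

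There is essentially no obstacle here beyond careful bookkeeping; the only thing to notice is that the denominator $a+a''$ appearing in $F$ and $G$ is uniformly bounded away from zero because the Minkowski unit ball $\mathcal{P}$ has strictly positive curvature (an assumption from Section \ref{SecMink}), so every coefficient that appears in $P_n$ after repeated differentiation is indeed smooth in $\theta$ and hence bounded on $S^1$. No new estimates or maximum-principle arguments are required: the corollary reduces the problem of bounding time derivatives to that of bounding spatial derivatives, which has already been handled.
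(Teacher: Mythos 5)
Your proposal is correct and takes essentially the same approach as the paper, which simply asserts that time derivatives depend polynomially on spatial derivatives and invokes Proposition \ref{prop7}. Your inductive bookkeeping, showing $\partial_t^n k = P_n(\theta, k, \ldots, \partial_\theta^{2n} k)$ with bounded smooth coefficients, makes this assertion precise but is the same underlying argument.
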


\begin{proof}
All the time derivatives depends polynomially on the spatial derivatives.
Then, uniform bounds on the spatial derivatives yields uniform bounds to the
time derivatives.
\end{proof}

\begin{teo}
\label{teo6} The solution to the minkowskian curvature evolution PDE
continues until the area converges to zero.
\end{teo}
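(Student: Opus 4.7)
The plan is to argue by contradiction on the maximal existence interval. Let $[0,T)$ be the maximal interval on which the flow is well defined, and suppose for contradiction that $T < t_V = \frac{A(0)}{2A(\mathcal{P})}$. Since Lemma \ref{lemma2} gives $\frac{dA}{dt}=-2A(\mathcal{P})$, the area is linear in $t$, so $A(t)=A(0)-2A(\mathcal{P})t$ is bounded away from zero on $[0,T)$ by the positive constant $A(T)>0$. The strategy is to use the a priori estimates already proved to extend $k$ smoothly to $t=T$, and then invoke the short-time existence in Theorem \ref{teo4} restarted at $t=T$ to continue the flow beyond $T$, contradicting the maximality of $T$.

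First I would collect the a priori bounds. Theorem \ref{teo5} gives that $k$ is uniformly bounded on $S^{1}\times[0,T)$ since $A(t)\geq A(T)>0$ on that interval. Combined with the minimum-principle argument of Theorem \ref{teo4}, $k$ is also bounded below by $k_{\mathrm{MIN}}(0)>0$. Propositions \ref{prop5}, \ref{prop6} and \ref{prop7} then yield uniform bounds on all spatial derivatives $\partial_\theta^m k$, and Corollary \ref{coro2} upgrades these to uniform bounds on all mixed time-space derivatives. In particular, $k$ and every derivative of $k$ are uniformly H\"older continuous on $S^{1}\times[0,T)$, so $k$ extends continuously (in fact smoothly) to $S^{1}\times[0,T]$ by limiting values.

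Next I would verify that $k(\,\cdot\,,T)$ is valid initial data for Theorem \ref{teo4}. Positivity follows from the lower bound $k\geq k_{\mathrm{MIN}}(0)$ carried to the limit. The closure conditions
\begin{equation*}
\int_{0}^{2\pi}\frac{a(\theta)+a''(\theta)}{k(\theta,T)}\sin\theta\,d\theta=\int_{0}^{2\pi}\frac{a(\theta)+a''(\theta)}{k(\theta,T)}\cos\theta\,d\theta=0
\end{equation*}
hold by continuity from the corresponding identities on $[0,T)$ (which were shown inside the proof of Theorem \ref{teo4}). Hence $k(\,\cdot\,,T)$ is a strictly positive $C^{1+\alpha}$ (indeed $C^{\infty}$) function satisfying the hypotheses of Theorem \ref{teo4}, so there exists $\varepsilon>0$ and a smooth solution of the PDE on $S^{1}\times[T,T+\varepsilon)$ with initial value $k(\,\cdot\,,T)$. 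Uniqueness for the parabolic equation glues this to the existing solution, producing a solution defined on $[0,T+\varepsilon)$, contradicting the maximality of $T$. Therefore $T=t_V$, and the flow exists exactly up to the instant at which the enclosed area vanishes.

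The only real obstacle is the smooth-extension step: one has to be careful that the uniform control obtained in Sections $6$ depends only on an upper bound for $T$ and a lower bound for $A$, so the bounds do not degenerate as $t\to T^{-}$. Once that is granted, the extension of $k$ and the reopening of short-time existence at $t=T$ are routine parabolic facts, and the proof closes immediately.
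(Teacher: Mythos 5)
Your proposal is correct and follows essentially the same route as the paper: the a priori bounds on $k$ and its derivatives (Theorem \ref{teo5}, Propositions \ref{prop5}--\ref{prop7}, Corollary \ref{coro2}) give a smooth limit at $t=T$, which is then used as fresh initial data to restart short-time existence and contradict maximality of $T$. The paper states this in three terse sentences; your version usefully spells out the contradiction on the maximal interval and the verification that the closure conditions for Lemma \ref{lemma7} persist at $t=T$, but the underlying argument is the same.
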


\begin{proof}
We just proved that if $\displaystyle\lim_{t\rightarrow T}A(t)>0$ then $k$
and all of its derivatives remain bounded. By the Arzela's theorem $k$ has a
limit as $t$ goes to $T$ which is $C^{\infty }$. This shows that as long as
the area remains bounded away from zero we can extend the solution, and then
the solution exists until the area goes to $0$.
\end{proof}

\end{document}